  \def\<{{\langle}} 
  \def\>{{\rangle}}
  \def\eps{\varepsilon}
  \def\note#1{{}}
  \def\note#1{} 
  \def\ev{{\rm ev}} 
    \def\ve{\mathrm{ve}}
   \def \hve{\widehat{\ve}}
 \def\d{{\rm d}}
  \def\beq{\begin{equation}} 
  \def\eeq{\end{equation}}
  \def\id{\mathrm{id}}
  \def\ot{{\otimes}}
 \def\htau{\hat{\tau}}
 \def\hchi{\hat{\chi}}
 \def\hpi{\hat{\pi}}
 \def\hvarpi{\hat{\varpi}}
  \def\homega{\hat{\omega}}
    \def\hT{\widehat{T}}
    \def\bhT{{\bf \widehat{T}}}
    \def\bhP{{\bf \widehat{P}}}
  \def \hlambda{\widehat{\lambda}}
    \def\ev{\mathrm{ev}}
   \def \hev{\widehat{\ev}}
   \def\bUpsilon{\overline{\Upsilon}}
     \def\bGamma{\overline{\Gamma}}
  \def\hrho{\hat{\rho}}
   \def\hsigma{\hat{\sigma}}
  \def\hlambda{\hat{\lambda}}
  \newcounter{zlist}
  \newcounter{blist}
  \newcounter{rlist}
\def\stac#1{\raise-.2cm\hbox{$\stackrel{\displaystyle\otimes}{\scriptscriptstyle{#1}}$}}
\def\cten#1{\raise-.2cm\hbox{$\stackrel{\displaystyle\widehat{\otimes}}
{\scriptscriptstyle{#1}}$}}
  \def\Label#1{\label{#1}\ifmmode\llap{[#1] }\else 
  \marginpar{\smash{\hbox{\tiny [#1]}}}\fi} 
  \def\Label{\label}
  \newtheorem{proposition}{Proposition}[subsection]
  \newtheorem{lemma}[proposition]{Lemma} 
  \newtheorem{corollary}[proposition]{Corollary} 
  \newtheorem{theorem}[proposition]{Theorem} 
  \theoremstyle{definition} 
  \newtheorem{definition}[proposition]{Definition}
  \theoremstyle{remark} 
  \newtheorem{remark}[proposition]{Remark}
  \newcounter{c} 
  \renewcommand{\[}{\setcounter{c}{1}$$} 
  \newcommand{\etyk}[1]{\vspace{-7.4mm}$$\begin{equation}\Label{#1} 
  \addtocounter{c}{1}} 
  \renewcommand{\]}{\ifnum \value{c}=1 $$\else \end{equation}\fi} 
\newcommand{\Mor}{{\sf Mor}}
\newcommand{\Dual}{{\sf Dual}}
\newcommand{\Herd}{{\sf Herd}}
\newcommand{\bHerd}{\overline{\sf Herd}}
\newcommand{\pretor}{{\sf PreTor}}
\newcommand{\EM}{{\sf EM}}
\newcommand{\cov}{{\rm cov}}
\newcommand{\Adj}{{\sf Adj}}
\newcommand{\Span}{{\sf Span}}
\def\Ab{{\sf Ab}}
\def\ot{\otimes}
\def\id{\textrm{{\small 1}\normalsize\!\!1}}
\def\TT{{\mathbb T}}
\def\ZZ{{\mathbb Z}}
\def\tt{{\mathfrak Z}}
\newcommand{\Cc}{\mathcal{C}}
\newcommand{\Dd}{\mathcal{D}}
\newcommand{\Mm}{\mathcal{M}}
\newcommand{\Tt}{\mathcal{Z}}
\newcommand{\Ww}{\mathcal{W}}
\newcommand{\Xx}{\mathcal{X}}
\newcommand{\Yy}{\mathcal{Y}}
\def\m{{\sf m}}
\def\biota{{\mathsf{u}}}
\def\A{{\bf A}}
\def\B{{\bf B}}
\def\C{{\bf C}}
\def\D{{\bf D}}
\def\E{{\bf E}}
\def\F{{\bf F}}
\def\H{{\bf H}}
\def\P{{\bf P}}
\def\Q{{\bf Q}}
\def\S{{\bf S}}
\def\T{{\bf T}}
\def\X{{\bf X}}
\def\Y{{\bf Y}}
\def\*C{{}^*\hspace*{-1pt}{\Cc}}
\def\text#1{{\rm {\rm #1}}}
\def\ol{\overline}
\def\ul{\underline}
\def\EM#1#2{#1\sp{#2}}
\def\EMC#1#2{#1\sb{#2}}
\def\EMM#1{(\Xx,\Yy)\sp{#1}}
\def\EMAb#1{(\Ab,\Ab)\sp{#1}}
\def\EMB#1{(\Xx,\Tt\sb{\C})\sp{#1}}
\def\bv{\bar{v}}
\def\bw{\bar{w}}
\def\tA{{A^{\ot}}}
\def\tB{{B^{\ot}}}
\def\fr#1{F\sb{#1}}
\def\frc#1{F\sp{#1}}
\def\feps{\bar{\eps}}
\begin{document} 

 \title[A Beck-type theorem for a Morita context]{The Eilenberg-Moore category and a Beck-type theorem for a Morita context} 
 \author{Tomasz Brzezi\'nski}
 \address{ Department of Mathematics, Swansea University, 
  Singleton Park, \newline\indent  Swansea SA2 8PP, U.K.} 
  \email{T.Brzezinski@swansea.ac.uk}   
\author{Adrian Vazquez Marquez}
 \address{ Department of Mathematics, Swansea University, 
  Singleton Park, \newline\indent  Swansea SA2 8PP, U.K.} 
\email{397586@swansea.ac.uk} 
 \author{Joost Vercruysse}
   \address{ Faculty of Engineering, Vrije Universiteit Brussel (VUB), \newline\indent B-1050 Brussels, Belgium}
\email{jvercruy@vub.ac.be}

    \date{September 2009} 
  \subjclass[2000]{18A40} 
  \begin{abstract} 
The  Eilenberg-Moore constructions and a Beck-type theorem for pairs of monads are described. More specifically,  a notion of a {\em Morita context} comprising of two monads, two bialgebra functors and two connecting maps is introduced. It is shown that in many cases equivalences between categories of algebras are induced by such Morita contexts. The Eilenberg-Moore category of representations of a Morita context is constructed. This construction allows one to associate two pairs of adjoint functors with right adjoint functors having a common domain or a {\em double adjunction} to a Morita context. It is shown that, conversely, every Morita context arises from a double adjunction. The comparison functor between the  domain of  right adjoint functors in a double adjunction  and the Eilenberg-Moore category of the associated Morita context is defined. The sufficient and necessary conditions for this comparison functor to be an equivalence (or for the {\em moritability} of a pair of functors with a common domain) are derived. 
  \end{abstract} 
  \maketitle 
  \tableofcontents

\section{Introduction}
In the last two decades, Hopf-Galois theory went through a series of generalisations, ultimately leading to the theory of Galois comodules over corings, which elucidates its relation with well-known Beck's  monadicity theorem.
This evolution has revived the interest among Hopf algebraists in the theory of (co)monads. Several aspects of Hopf(-Galois) theory have been reformulated in the framework of (co)monads so that further clarification of the underlying categorical mechanisms of the theory has been achieved. This has led in particular to two different approaches to the definition of a categorical (functorial) notion of a {\em herd} or {\em pre-torsor}, appearing almost simultaneously in \cite{BrzVer:herd} and \cite{BohMen:pretor}. The motivation for the present paper was to study the connection between these two approaches in more detail.

The definition of a pre-trosor in \cite{BohMen:pretor} takes a pair of adjunctions (with coinciding codomain category for the left adjoints) as a starting point. The aim of this paper is to show that there is a close relationship between pairs of adjunctions and functorial Morita contexts similar to the correspondence between single adjunctions and monads. 
In this sense the results presented here can be interpreted as a `two-dimensional' version of the latter correspondence. 
A key feature of this work is that it links aspects of the theory that are of more algebraic nature (Morita contexts) with aspects that are of more categorical nature (Beck's theorem). More precisely, we prove a version of Beck's theorem on precise monadicity in this `two-dimensional' setting and provide a categorical (monadic) version of classical Morita theory.
  
  These are the main results and the organisation of the paper. In Section~\ref{adjMor}  we recall from \cite{BohMen:pretor} the definition of the category of double adjunctions  on categories $\Xx$ and $\Yy$, $\Adj (\Xx,\Yy)$,  and introduce the category  $\Mor (\Xx,\Yy)$ of (functorial) Morita contexts. 
  
  In Section~\ref{sec.Beck} we describe functors connecting categories of double adjunctions and Morita contexts. More precisely we compare categories $\Adj (\Xx,\Yy)$ and $\Mor (\Xx,\Yy)$. First we define a functor $\Upsilon: \Adj (\Xx,\Yy)\to \Mor (\Xx,\Yy)$. To construct a functor in the converse direction, to each Morita context $\TT$ we associate its {\em Eilenberg-Moore category} $\EMM\TT$. This is very reminiscent of the classical Eilenberg-Moore construction of algebras of a monad (recalled in Section~\ref{sec.eil-moo}), and, in a way, is based on doubling of the latter. Objects in $\EMM\TT$ are two algebras, one for each monad in $\TT$, together with two connecting morphisms. Once $\EMM\TT$ is defined, two adjunctions, one between $\EMM\TT$ and $\Xx$ the other between $\EMM\TT$ and $\Yy$,  are constructed. This construction yields a functor $\Gamma: \Mor (\Xx,\Yy)\to \Adj (\Xx,\Yy)$. Next it is shown that the functors $(\Gamma, \Upsilon)$  form an adjoint pair, and that $\Gamma$ is a full and faithful functor. The counit of this adjunction is given by a {\em comparison functor} $K$ which compares the common category $\Tt$ in a double adjunction $\tt$ with the Eilenberg-Moore category of the associated Morita context $\TT = \Upsilon(\tt)$. A necessary and sufficent condition for the comparison functor to be an equivalence are derived. This is  closely related to the existence of colimits of diagrams of certain type in $\Tt$ and is a  Morita--double adjunction version of the classical Beck theorem (on precise monadicity). 
  
 In Section~\ref{sec.Moritath} we analyse which objects of $\Mor (\Xx,\Yy)$ describe equivalences between categories of algebras of monads. It is also proven that large classes of equivalences between categories of algebras are induced by  Morita contexts. 
  
  In Section~\ref{sec.examples} examples and special cases of the theory developed in preceding sections are given. In particular, it is shown how the main results of Section~\ref{sec.Beck} can be applied to a single adjunction leading to a new point of view on some aspects of descent theory.  The Eilenberg-Moore category associated to the module-theoretic Morita context is identified as the category of modules of the associated matrix Morita ring. This procedure can be imiated in the general case if we assume the existence and preservation of binary coproducts in all categories and by all functors involved. We also show that the theory developed in Sections~\ref{adjMor}--\ref{sec.Moritath} is applicable to pre-torsors and herd functors, thus bringing forth means for comparing pre-torsors with balanced herds. The paper is completed with comments on dual versions of constructions presented and with an outlook.
  
  Throughout the paper, the composition of functors is denoted by juxtaposition, the symbol $\circ$ is reserved for composition of natural transformations and morphisms. The action of a functor on an object or morphism is usually denoted by juxtaposition of corresponding symbols (no brackets are typically used). Similarly the morphism corresponding to a natural transformation, say  $\alpha$, evaluated at an object, say $X$, is denoted by a juxtaposition, i.e.\ by $\alpha X$. For an object $X$ in a category, we will use the symbol $X$ as well to denote the identity morphism on $X$.
Typically, but not exclusively, objects and functors are denoted by capital Latin letters, morphisms by small Latin letters and natural transformations by Greek letters. 

\section{Double adjunctions and Morita contexts}\label{adjMor}
The aim of this section is to recall the standard correspondence between adjoint functors and (co)monads and to introduce the main categories studied in the paper. 

\subsection{Adjunctions and (co)monads}\label{sec.eil-moo}
It is well-known from \cite{EM} that there is a close relationship between pairs of \emph{adjoint functors} $(L:\Xx\to \Yy, R:\Yy\to \Xx)$, \emph{monads} $\A$ on $\Xx$ and \emph{comonads} $\C$ on $\Yy$. Starting from an adjunction $(L,R)$ with unit $\eta:\Xx\to RL$ and counit $\varepsilon:LR\to \Yy$, the corresponding monad and comonad are $\A=(RL, R\varepsilon L, \eta)$ and $\C=(LR,L\eta R,\varepsilon)$. Starting from a monad $\A=(A,\m,\biota)$  (where $\m$ is the multiplication and $\biota$ is the unit), one first defines the \emph{Eilenberg-Moore category} $\EM\Xx\A$ of $\A$-\emph{algebras}, whose objects are  pairs $(X,\rho^X)$, where $X$ is an object in $\Xx$ and $\rho^X:AX\to X$ is a morphism in $\Xx$ such that $\rho^X\circ A\rho^X= \rho^X\circ \m X$ and $ \rho^X\circ\biota X=X$. There is a pair of adjoint functors $(\fr A:\Xx\to \EM\Xx\A, U_A: \EM\Xx\A\to \Xx)$, where $U_A$ is a \emph{forgetful functor} and $\fr A$ is the  \emph{induction} or {\em free algebra functor}, for all objects $X\in\Xx$ defined by $\fr AX=(AX,\m X)$. Similarly, one defines the category $\EMC\Yy\C$ of \emph{coalgebras} over $\C$, and obtains an adjoint pair $(U^C:\EMC\Yy\C\to \Yy, \frc C:\Yy\to \EMC\Yy\C)$, where $\frc C$ is the induction or free coalgebra functor and $U^C$ is the forgetful functor. The original and constructed adjunctions are related by the \emph{comparision}  $K:\Yy\to \EM\Xx\A$ (or $K':\Xx\to \EMC\Yy\C$ in the comonad case). For any $Y\in \Yy$, the comparision functor is given by 
$KY=(RY,R\varepsilon Y)$. The functor $R$ is said to be \emph{monadic} if $K$ is an equivalence of categories. Similarly $L$ is said to be \emph{comonadic} if $K'$ is an equivalence of categories. \emph{Beck's Theorem} \cite{Bec:tri} provides one with necessary and sufficient conditions for the functors $R$ and $L$ to be monadic or comonadic respectively. For more detailed and comprehensive study of matters described in this section we refer to \cite{BarWel:ttt}. 

\subsection{The category of double adjunctions}\label{sec.adj}
In this paper, rather than looking at \emph{one} adjunction, we consider \emph{two} adjunctions  with right adjoints operating on a common category. More precisely, let $\Xx$ and $\Yy$ be two categories.
Following \cite{BohMen:pretor}, the category $\Adj(\Xx,\Yy)$ is defined as follows.
An object in $\Adj(\Xx,\Yy)$ is a pentuple (or a triple) $\tt=(\Tt,(L_A,R_A),(L_B,R_B))$, where $\Tt$ is a category and
$(L_A:\Xx\to\Tt,R_A:\Tt\to\Xx)$ and $(L_B:\Yy\to\Tt,R_B:\Tt\to\Yy)$ are adjunctions whose units and counits are denoted respectively by $\eta^A$, $\eta^B$ and $\varepsilon^A$, $\varepsilon^B$.
A morphism $F:\tt=(\Tt,(L_A,R_A),(L_B,R_B))\to \tt'=(\Tt',(L_A',R_A'),(L_B',R_B'))$ is a functor $F:\Tt'\to \Tt$ such that $R_AF=R'_A$ and $R_BF=R'_B$. 
In other words, $\Adj(\Xx,\Yy)$ is a full subcategory of the category of {\em spans} $\Span(\Xx,\Yy)$ which consists of all spans $\Xx \leftarrow \Tt \rightarrow \Yy$ such that the functors $\Tt\to \Xx$ and $\Tt\to \Yy$ have left adjoints.

To a morphism $F$ in $\Adj(\Xx,\Yy)$, one associates two natural transformations
\begin{equation}\label{eq:a,b}
a:=(\varepsilon^AFL'_A)\circ(L_A\eta'^A):L_A\to FL'_A\ , \qquad 
b:=(\varepsilon^BFL'_B)\circ(L_B\eta'^B):L_B\to FL'_B\ ,
\end{equation}
 which satisfy the following compatibility conditions
\begin{eqnarray}
(R_Aa)\circ \eta^A=\eta'^A, &\qquad& (R_Bb)\circ\eta^B=\eta'^B, \label{adjmorph1}\\
(F\varepsilon'^A)\circ(aR'_A)=\varepsilon^AF, &\qquad& (F\varepsilon'^B)\circ(bR'_B)=\varepsilon^BF. \label{adjmorph2}
\end{eqnarray}

\subsection {The category of Morita contexts}\label{sec.morita}
Consider  two categories $\Xx$ and $\Yy$. Let $\A=(A,\m^A,\biota^A)$ be a monad on $\Xx$, $\B=(B,\m^B,\biota^B)$ be a monad on $\Yy$. An $\A$-$\B$ \emph{bialgebra}  \emph{functor} $\T=(T,\lambda,\rho)$, is a functor $T:\Yy\to\Xx$ equipped with two natural transformations $\rho:TB\to T$ and $\lambda:AT\to T$ such that
\begin{eqnarray}
&\xymatrix{TBB \ar[rr]^-{T\m^B}\ar[d]_{\rho B} && TB \ar[d]^{\rho} \\
TB \ar[rr]^-{\rho } && T\ ,} \qquad 
\xymatrix{TB \ar[r]^\rho & T\\
 & T \ar[u]_{=} \ar[ul]^{T\biota^B} \ ,} \label{bialg1}\\
&\xymatrix{AAT \ar[rr]^-{\m^AT}\ar[d]_{A\lambda} && AT \ar[d]^{\lambda} \\
AT \ar[rr]^-{\lambda } && T\ ,} \qquad 
\xymatrix{AT \ar[r]^\lambda & T \\
 & T \ar[u]_{=} \ar[ul]^{\biota^A T} \ ,} \qquad 
 \xymatrix{ATB \ar[rr]^-{\lambda B}\ar[d]_{A\rho} && TB \ar[d]^{\rho} \\
AT \ar[rr]^-{\lambda  } && T\ .}\label{bialg2}
\end{eqnarray}
A bialgebra morphism $\phi:\T\to \T'$ between two $\A$-$\B$ bialgebra functors is a natural transformation that satisfies the following conditions
\[
\xymatrix{
TB \ar[rr]^-{\rho} \ar[d]_-{\phi B} && T \ar[d]^\phi \\
T'B \ar[rr]_-{\rho'} && T' \ ,
}
\qquad
\xymatrix{
AT \ar[rr]^-{\lambda} \ar[d]_-{A\phi} && T\ar[d]^\phi \\
AT' \ar[rr]_-{\lambda'} && T' \ .
}
\]
An $\A$-$\B$ bialgebra functor $\T$ induces a functor $\Yy\to \EM\Xx\A$, which is denoted again by $T$ and is defined by $TY=(TY,\lambda Y)$, for all $Y\in \Yy$.

A \emph{Morita context} on $\Xx$ and $\Yy$ is a sextuple $\TT=(\A,\B,\T,\bhT,\ev,\hev)$, that consists of a monad $\A=(A,\m^A,\biota^A)$ on $\Xx$, a monad $\B=(B,\m^B,\biota^B)$ on $\Yy$, an $\A$-$\B$ bialgebra functor $\T$, a $\B$-$\A$ bialgebra functor $\bhT$ and natural transformations $\ev:T\hT\to A$ and $\hev:\hT T\to B$. These are required to satisfy the following conditions: $\ev$ is an $\A$-$\A$ bialgebra morphism, $\hev$ is a $\B$-$\B$ bialgebra morphism, and the following diagrams commute
\begin{eqnarray}
 \xymatrix{T\hT T \ar[rr]^-{T\hev}\ar[d]_{\ev T} && TB \ar[d]^{\rho} \\
AT \ar[rr]^-{\lambda } && T\ ,} &\qquad&
 \xymatrix{\hT T \hT \ar[rr]^-{\hev \hT}\ar[d]_{\hT \ev} && B\hT \ar[d]^{\hlambda} \\
\hT A\ar[rr]^-{\hrho } && \hT \ ,} \label{morita} \\
\xymatrix{
\hT AT \ar[rr]^-{\hT \lambda} \ar[d]_-{\hrho T} && \hT T \ar[d]^\hev \\
\hT T \ar[rr]_-{\hev} && B \ ,
} &\qquad&
\xymatrix{
TB\hT \ar[rr]^-{\rho\hT} \ar[d]_{T\hlambda} && T\hT \ar[d]^\ev \\
T\hT \ar[rr]_-\ev && A\ . \label{balanced}
}
\end{eqnarray}
Diagrams \eqref{balanced} mean that $\ev$ is {\em $\B$-balanced} and $\hev$ is {\em $\A$-balanced}. A pair of bialgebras $\T$, $\bhT$ that satisfy all the conditions in this section {\em except for} diagrams \eqref{balanced} is termed a {\em pair of formally dual bialgebras} or a {\em pre-Morita context}. Thus a Morita context is a balanced pair of formally dual bialgebras.

\begin{remark}\label{rem.Moritaring}
Let $k_A$ and $k_B$ be unital associative rings. Set $\Xx$ to be the category of left $k_A$-modules and $\Yy$ the category of left $k_B$-modules. For a $k_A$-ring $A$ (i.e.\ a ring map $k_A\to A$) with multiplication $\mu_A$ and unit $\iota_A$ and a $k_B$-ring $B$ with multiplication $\mu_B$ and unit $\iota_B$ consider monads  $\A=(A\ot_{k_A}-, \mu_A\ot_{k_A}-,\iota_A\ot_{k_A}-)$ and $\B=(B\ot_{k_B}-,\mu_B\ot_{k_B}-,\iota_B\ot_{k_B}-)$.  Then the definition of a Morita context with monads $\A$ and $\B$ coincides  with the classical definition of a (ring-theoretic) Morita context between rings $A$ and $B$; see Section~\ref{sec.morita} for a more detailed study of this example. Note however that the definition of a categorical Morita context introduced in this paper differs from the notion of a {\em wide Morita context} \cite{CIGT:wide}, which also gives a categorical albeit different interpretation of classical Morita contexts. The connection between categorical Morita contexts and wide Morita contexts is discussed in Section~\ref{sec.Moritath}.
\end{remark}

A morphism $\phi:\TT=(\A,\B,\T,\bhT,\ev,\hev)\to \TT'=(\A',\B',\T',\bhT',\ev',\hev')$ between Morita contexts on $\Xx$ and $\Yy$ is a quadruple $(\phi^1,\phi^2,\phi^3,\phi^4)$ defined as follows. First,  $\phi^1:\A\to\A'$ and $\phi^2:\B\to\B'$ are monad morphisms, i.e.\ $\phi^1:A\to A'$ and $\phi^2:B\to B'$ are natural transformation such that 
\begin{eqnarray}
\xymatrix{
AA \ar[rr]^-{\m^A} \ar[d]_{\phi^1\phi^1} && A \ar[d]^{\phi^1} \\
A'A' \ar[rr]_-{\m^{A'}} && A' \ ,
}\qquad
\xymatrix{
\id_\Xx \ar[drr]_{\biota^{A'}} \ar[rr]^{\biota^A} && A \ar[d]^{\phi^1}\\
&& A' \ ,
}\label{monadmorphA}
\\
\xymatrix{
BB \ar[rr]^-{\m^B} \ar[d]_{\phi^2\phi^2} && B \ar[d]^{\phi^2} \\
B'B' \ar[rr]_-{\m^{B'}} && B' \ ,
}\qquad
\xymatrix{
\id_\Yy \ar[drr]_{\biota^{B'}} \ar[rr]^{\biota^B} && B \ar[d]^{\phi^2}\\
&& B' \ , 
}\label{monadmorphB}
\end{eqnarray}
where the shorthand notation $\phi^1\phi^1 := A'\phi^1\circ \phi^1A = \phi^1 A'\circ A\phi^1$ etc.\  for the Godement product of natural transformations is used. These make $\T'$ an  $\A$-$\B$ bialgebra functor and $\bhT'$ a  $\B$-$\A$ bialgebra functor with structures given by
\begin{eqnarray*}
\xymatrix{
T'B \ar[rr]^-{T'\phi^2} && T'B' \ar[rr]^-{\rho'} && T' \ ,
} &\qquad &
\xymatrix{
AT' \ar[rr]^-{\phi^1T'} && A'T' \ar[rr]^-{\lambda'} && T' \ ,
}\\
\xymatrix{
\hT'A \ar[rr]^-{\hT'\phi^1} && \hT'A' \ar[rr]^-{\hrho'} && \hT' \ ,
} &\qquad&
\xymatrix{
B\hT' \ar[rr]^-{\phi^2\hT'} && B'\hT' \ar[rr]^-{\hlambda'} && \hT' \ . 
}
\end{eqnarray*}
Second, $\phi^3:\T\to \T'$ is a morphism of $\A$-$\B$ bialgebras and $\phi^4:\bhT\to \bhT'$ is a morphism of $\B$-$\A$ bialgebras. These morphisms are required to satify the following compatibility conditions
\begin{equation}\label{moritamorph}
\xymatrix{
T\hT \ar[rr]^-{\ev} \ar[d]_{\phi^3\phi^4} && A \ar[d]^{\phi^1} \\
T'\hT' \ar[rr]_-{\ev'} && A' \ ,
}\qquad
\xymatrix{
\hT T \ar[rr]^-{\hev} \ar[d]_{\phi^4\phi^3} && B \ar[d]^{\phi^2} \\
\hT' T' \ar[rr]_-{\hev'} && B' \ .
}
\end{equation}

This completes the definition of  the category $\Mor(\Xx,\Yy)$ of Morita contexts on $\Xx$ and $\Yy$.

\section{A Beck-type theorem for a Morita context}\label{sec.Beck}
The aim of this section is to analyse the relationship between double adjunctions described in Section~\ref{sec.adj} and Morita contexts defined in Section~\ref{sec.morita}. The same notation and conventions as in Section~\ref{adjMor} are used.

\subsection {From double adjunctions to Morita contexts}\label{adj.to.mor}

Fix categories $\Xx$ and $\Yy$. The aim of this section is to construct a functor
$\Upsilon$ from the category of double adjunctions $\Adj(\Xx,\Yy)$ to the category of Morita contexts $\Mor(\Xx,\Yy).$
Take any object $\tt=(\Tt,(L_A,R_A),(L_B,R_B))$ in $\Adj(\Xx,\Yy)$ and define an object
$$\Upsilon(\tt)=\TT:=(\A,\B,\T,\bhT,\ev,\hev)$$ 
in $\Mor(\Xx, \Yy)$ as follows. 

The adjunction $(L_A,R_A)$ defines a monad $\A:=(A:=R_AL_A,\m^A:=R_A\varepsilon^AL_A,\eta^A)$ on $\Xx$ and the adjunction $(L_B,R_B)$ defines a monad $\B:=(B:=R_BL_B,\m^B:=R_A\varepsilon^BL_B,\eta^B)$ on $\Yy$; see Section~\ref{sec.eil-moo}.
Set $T:=R_AL_B:\Yy\to \Xx$ and view it as an $\A$-$\B$ bialgebra functor by  $R_A\varepsilon^AL_B: AT=R_AL_AR_AL_B\to R_AL_B = T$ and $R_A\varepsilon^BL_B : TB=R_AL_BR_BL_B\to R_AL_B = T$, i.e.\ $\T=(R_AL_B,R_A\varepsilon^AL_B,R_A\varepsilon^BL_B)$. To check that $\T$ is  a bialgebra functor one can proceed as follows: the associativity conditions (of left $\A$-action, right $\B$-action and mixed associativity) are a straightforward consequence of naturality of the counits. The unitality of left and right actions follows by the triangular identities for units and counits of  adjunctions $(L_A,R_A)$ and $(L_B,R_B)$. 
The $\B$-$\A$ bialgebra functor $\bhT:=(R_BL_A,R_B\varepsilon^BL_A,R_B\varepsilon^AL_A)$ is defined similarly. 
Finally define natural transformations
\begin{eqnarray*}
\xymatrix{\ev: T\hT = R_AL_BR_BL_A \ar[rrr]^-{R_A\varepsilon^BL_A} &&& R_AL_A = A \ ,}\\
\xymatrix{\hev: \hT T= R_BL_AR_AL_B \ar[rrr]^-{R_B\varepsilon^AL_B} &&& R_BL_B = B \ .} 
\end{eqnarray*}
All compatibility diagrams \eqref{morita}-\eqref{balanced} follow (trivially) by the naturality of the counits $\varepsilon^A$ and $\varepsilon^B$.

For any  morphism $F:\tt\to \tt'$  in $\Adj(\Xx,\Yy)$, the corresponding morphism of Morita contexts,
$$\Upsilon(F)=(\phi^1,\phi^2,\phi^3,\phi^4) : \Upsilon(\tt)\to \Upsilon(\tt'), $$
is defined as follows. The monad morphisms are:
\begin{eqnarray*}
\xymatrix{
\phi^1:R_AL_A \ar[rr]^{R_Aa} && R_AFL'_A =R'_AL'_A \ ,
} \\
\xymatrix{
\phi^2:R_BL_B \ar[rr]^{R_Bb} && R_BFL'_B =R'_BL'_B \ ,
}
\end{eqnarray*}
where $a$ and $b$ are defined by equations \eqref{eq:a,b} in Section~\ref{sec.adj}.
Equations \eqref{adjmorph1} express exactly that $\phi^1$ and $\phi^2$ satisfy the second diagrams in \eqref{monadmorphA} and \eqref{monadmorphB}, respectively. That $\phi^1$ and $\phi^2$ satisfy the first diagrams in  \eqref{monadmorphA} and \eqref{monadmorphB} follows by \eqref{adjmorph2} and by the naturality.

The morphisms of bialgebras are:
\begin{eqnarray*}
\xymatrix{
\phi^3:T=R_AL_B \ar[rr]^{R_Ab} && R_AFL'_B =R'_AL'_B=T' \ ,
}\\
\xymatrix{
\phi^4:\hT=R_BL_A \ar[rr]^{R_Ba} && R_BFL'_A =R'_BL'_A=\hT' \ .
}
\end{eqnarray*}
That $\phi^3$ and $\phi^4$ are bialgebra morphisms satisfying compatibility conditions \eqref{moritamorph} follows by equations \eqref{adjmorph2} and by the naturality (the argument is very similar to the one used for checking that $\phi^1$ and $\phi^2$ preserve  multiplications). 

\subsection {The Eilenberg-Moore category of a Morita context}\label{sec.eil-moo.mor}
Before we can make a converse construction for the functor $\Upsilon$ introduced in Section~\ref{adj.to.mor}, we need to define a category of representations for a Morita context. This construction is similar to that of the Eilenberg-Moore category of algebras for a monad; see Section~\ref{sec.eil-moo}.

Let $\TT=(\A,\B,\T,\bhT,\ev,\hev)$ be an object in $\Mor(\Xx,\Yy)$. The {\em Eilenberg-Moore category  associated to $\TT$}, $\EMM\TT$, is defined as follows. Objects in $\EMM\TT$ are sextuples (or quadruples) $\X=((X,\rho^X),(Y,\rho^Y),v,w)$, where $(X,\rho^X)\in\EM\Xx\A$ is an algebra for the monad $\A$, $(Y,\rho^Y)\in\EM\Yy\B$ is an algebra for the monad $\B$, $v:TY\to X$ is a morphism in $\EM\Xx\A$ and $w:\hT X\to Y$ is a morphism in $\EM\Yy\B$ (i.e. 
\begin{equation}
\xymatrix{
ATY \ar[rr]^-{Av} \ar[d]_{\lambda Y} && AX \ar[d]^{\rho^X} \\
TY \ar[rr]_-{v} && X \ ,
}\qquad
\xymatrix{
B\hT X \ar[rr]^-{Bw} \ar[d]_{\hlambda X} && BY \ar[d]^{\rho^Y}\\
\hT X \ar[rr]_-w && Y \ ),
}\label{vwlin}
\end{equation}
satisfying the following compatibility conditions
\begin{eqnarray}
\xymatrix{
T\hT X \ar[rr]^-{Tw} \ar[d]_{\ev X} && TY \ar[d]^{v} \\
AX \ar[rr]_-{\rho^X} && X \ ,
} \qquad
\xymatrix{
\hT T Y \ar[rr]^-{\hT v} \ar[d]_{\hev Y} && \hT X \ar[d]^{w} \\
BY \ar[rr]_-{\rho^Y} && Y \ ,
}\label{vwass1}\\
\xymatrix{
\hT AX \ar[rr]^-{\hT \rho^X} \ar[d]_{\hrho X} && \hT X \ar[d]^w \\
\hT X \ar[rr]_-{w} && Y \ ,
}\qquad
\xymatrix{
TBY \ar[rr]^-{T\rho^Y} \ar[d]_{\rho Y} && TY \ar[d]^{v}\\
TY \ar[rr]_-{v} && X \ .
}\label{vwass2}
\end{eqnarray}
All these diagrams can be understood as generalised mixed associativity conditions.

A morphism $\X\to \X'$ in $\EMM\TT$ is  a couple $(f,g)$, where $f: X\to X'$ is a morphism in $\EM\Xx\A$ and $g:Y\to Y'$ is a morphism in $\EM\Yy\B$ (i.e.\ 
\begin{equation}
\xymatrix{
AX \ar[rr]^{Af} \ar[d]_{\rho^X} && AX' \ar[d]^{\rho^{X'}} \\
X \ar[rr]_f && X' \ , 
}\qquad
\xymatrix{
BY \ar[rr]^{Bg} \ar[d]_{\rho^Y} && BY' \ar[d]^{\rho^{Y'}} \\
Y \ar[rr]_g && Y' \ ),
}\label{fglin}
\end{equation}
such that
\begin{equation}
\xymatrix{
\hT X \ar[rr]^-{\hT f} \ar[d]_{w} && \hT X' \ar[d]^{w'} \\
Y \ar[rr]_-{g} && Y' \ ,
} \qquad
\xymatrix{
TY \ar[rr]^-{T g} \ar[d]_v && TY' \ar[d]^{v'} \\
X \ar[rr]_-{f} && X'\ .
}\label{fgcomp}
\end{equation}
This completes the construction of the Eilenberg-Moore category of a Morita context. 

\subsection {From Morita contexts to double adjunctions}\label{mor.to.adj}
In this section  a functor $\Gamma$ from the category of Morita contexts on $\Xx$ and $\Yy$, $\Mor(\Xx,\Yy)$, to the category of double adjunctions $\Adj(\Xx,\Yy)$ is constructed.

 For a Morita context $\TT = (\A, \B, \T, \bhT,\ev, \hev)\in\Mor(\Xx,\Yy)$, the double adjunction 
 $$\Gamma(\TT)=\tt=(\EMM\TT,(G_A,U_A),(G_B,U_B)),
 $$ 
 is defined as follows. $\EMM\TT$ is the Eilenberg-Moore category for the Morita context $\TT$ as defined in Section~\ref{sec.eil-moo.mor}. The functors $U_A:\EMM\TT\to \Xx$ and $U_B:\EMM\TT\to \Yy$ are the  forgetful functors (i.e.\ $U_A\X=X$ and $U_B\X=Y$ for all objects $\X=((X,\rho^X),(Y,\rho^Y),v,w)$ in $\EMM\TT$). The definition of the functors $G_A$ and $G_B$ is slightly more involved. For any $X\in\Xx$ and $Y\in\Yy$, define 
\begin{eqnarray*}
G_AX&=&((AX,\m^AX),(\hT X,\hlambda X),\ev X, \hrho X) ,\\
G_BY&=&((TY,\lambda Y),(BY,\m^BY),\rho Y,\hev Y).
\end{eqnarray*}
$(AX,\m^AX)$ is simply the free $\A$-algebra on $X$ (see Section~\ref{sec.eil-moo}), hence it is an object of $\EM\Xx\A$. From the discussion in Section~\ref{sec.morita}  we know that $(\hT X,\hlambda X)\in \EM\Yy\B$, with  a $\B$-algebra structure induced by the $\B$-$\A$ bialgebra functor $\hT$. To complete the check that $G_AX$ is an object of $\EMM \TT$ it remains to verify whether the maps $\ev X:T\hT X\to AX$ and $\rho X: \hT AX\to \hT X$ satisfy all needed compatibility conditions. The left hand side of \eqref{vwlin} expresses that $\ev$ is left $\A$-linear, the left hand side of \eqref{vwass1} that $\ev$ is right $\A$-linear. The right hand side of \eqref{vwlin}  follows by the mixed associativity of the $\B$-$\A$ bialgebra $\bhT$ (see the last diagram of \eqref{bialg2}). The right hand side of \eqref{vwass1} is the second Morita identity of the maps $\ev$ and $\hev$; see the right hand side of \eqref{morita}. The left hand side of \eqref{vwass2} follows again from the properties of $\bhT$ as a $\B$-$\A$ bialgebra, in particular by the associativity of its right $\A$-action; compare with the first diagram in \eqref{bialg1}. Finally, the right hand side of \eqref{vwass2} is an application of the fact that $\ev$ is $\B$-balanced, which is expressed in the right hand side of \eqref{balanced}. We conclude that $G_A$ (and, by symmetric arguments, also $G_B$) is well-defined on objects. For a morphism $f:X\to X'$ in $\Xx$, define $G_Af=(Af,\hT f)$ and similarly $G_Bg=(Tg,Bg)$ for any morphism $g$ in $\Yy$. Verification that $G_Af$ and $G_Bg$ are well defined is  very simple and left to the reader.

\begin{lemma} \label{lem.gu}
$(\EMM\TT, (G_A,U_A),(G_B,U_B))$ is a double adjunction. 
\end{lemma}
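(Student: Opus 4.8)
The plan is to establish two adjunctions, $(G_A, U_A)$ between $\Xx$ and $\EMM\TT$, and $(G_B, U_B)$ between $\Yy$ and $\EMM\TT$, by exhibiting explicit units and counits and verifying the triangular identities. Since the two cases are symmetric, I would carry out the argument in full only for $(G_A, U_A)$ and indicate that $(G_B, U_B)$ follows by the same reasoning with the roles of $\A,\T,\ev$ and $\B,\bhT,\hev$ interchanged.

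First I would define the unit $\eta^A: \Xx \to U_A G_A$ and the counit $\varepsilon^A: G_A U_A \to \EMM\TT$. For an object $X \in \Xx$ we have $U_A G_A X = AX$, so the natural candidate for the unit is the monad unit $\eta^A X := \biota^A X : X \to AX$, exactly as in the classical Eilenberg-Moore adjunction of Section~\ref{sec.eil-moo}. For the counit at an object $\X = ((X,\rho^X),(Y,\rho^Y),v,w)$ in $\EMM\TT$, recall that $G_A U_A \X = G_A X = ((AX,\m^A X),(\hT X, \hlambda X), \ev X, \hrho X)$; I would define $\varepsilon^A \X := (\rho^X, w) : G_A X \to \X$, where $\rho^X : AX \to X$ is the given $\A$-algebra structure and $w : \hT X \to Y$ is the given connecting morphism. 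The key verification here is that the pair $(\rho^X, w)$ is genuinely a morphism in $\EMM\TT$: the conditions \eqref{fglin} say $\rho^X$ is $\A$-linear (which is the associativity axiom of the $\A$-algebra $(X,\rho^X)$) and $w$ is $\B$-linear (which is precisely the right-hand diagram of \eqref{vwlin} defining the object $\X$); the conditions \eqref{fgcomp} that must be checked are $w \circ \hT \rho^X = \rho^Y \circ \hev Y \circ \hT v$ — wait, more carefully, the two squares of \eqref{fgcomp} applied to $f = \rho^X$, $g = w$ read $w \circ \hT \rho^X = \hrho X$ composed appropriately with $w$, and $v \circ T w = \rho^X \circ \ev X$; these are respectively the left-hand diagram of \eqref{vwass2} and the left-hand diagram of \eqref{vwass1} that already hold by virtue of $\X$ being an object of $\EMM\TT$.

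Next I would check naturality of $\eta^A$ and $\varepsilon^A$ and then verify the two triangular identities $U_A \varepsilon^A \circ \eta^A U_A = \id_{U_A}$ and $\varepsilon^A G_A \circ G_A \eta^A = \id_{G_A}$. Naturality of $\eta^A$ is immediate from naturality of $\biota^A$. Naturality of $\varepsilon^A$ amounts to checking that for a morphism $(f,g): \X \to \X'$ the square involving $(\rho^X, w)$ and $(\rho^{X'}, w')$ commutes componentwise, which follows from the $\A$-linearity of $f$ (left square of \eqref{fglin}) and the defining compatibility $g \circ w = w' \circ \hT f$ (left square of \eqref{fgcomp}). For the first triangular identity, evaluating at $X$ gives $\rho^X \circ A\biota^A X = \rho^X \circ \biota^A(AX)$, which is the unitality axiom of the $\A$-algebra $(AX, \m^A X)$, i.e.\ $\rho^X \circ \biota^A$ applied to the free algebra reduces to the identity by the monad unit law. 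For the second triangular identity, evaluating the $\EMM\TT$-morphism at $G_A X$ requires checking both components: the $\Xx$-component is $\m^A X \circ A \biota^A X = AX$, the standard monad unitality, and the $\Yy$-component is $\hrho X \circ \hT \biota^A X = \hT X$, which is exactly the unitality of the right $\A$-action of the $\B$-$\A$ bialgebra $\bhT$ (the middle diagram of \eqref{bialg2} transported to $\bhT$).

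The main obstacle, and the part deserving the most care, is confirming that the proposed counit $\varepsilon^A \X = (\rho^X, w)$ really lands in $\EMM\TT$ rather than merely in the product $\EM\Xx\A \times \EM\Yy\B$; this is where all four defining diagrams \eqref{vwlin}--\eqref{vwass2} of an object of $\EMM\TT$ get used, so I would lay out explicitly which axiom of $\X$ supplies each of the two squares in \eqref{fgcomp}. Everything else is a routine transcription of the classical monadic adjunction, doubled and decorated with the bialgebra and balancing data; I would state this explicitly and leave the symmetric verification for $(G_B, U_B)$ to the reader, noting only that it uses the left-hand diagrams of \eqref{vwass1} and \eqref{vwass2} together with the unitality of the left $\B$-action of $\T$.
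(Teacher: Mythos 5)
Your proposal is correct and follows essentially the same route as the paper: the unit is the monad unit, the counit at $\X$ is $(\rho^X,w)$ (resp.\ $(v,\rho^Y)$ for the $\B$-side), the two squares of \eqref{fgcomp} are supplied exactly by the left-hand diagrams of \eqref{vwass2} and \eqref{vwass1}, and the triangular identities reduce to unitality of the monad, of the right $\A$-action on $\bhT$, and of the algebra $(X,\rho^X)$ — precisely the paper's argument. One local slip to repair: your verification of $U_A\varepsilon^A\circ \eta^A U_A = \id_{U_A}$ is misstated — evaluated at $\X$ it reads $\rho^X\circ\biota^A X = X$, the unit axiom of the algebra $(X,\rho^X)$, not the equation $\rho^X \circ A\biota^A X = \rho^X \circ \biota^A (AX)$ you wrote (the unitality of the free algebra $(AX,\m^A X)$ instead belongs to the other triangular identity, where you do invoke it correctly).
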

\begin{proof} We  construct units $\nu^A, \nu^B$  and counits $\zeta^A, \zeta^B$ of adjunctions. 
For any objects $X\in\Xx$ and $Y\in\Yy$,  $\nu^A$, $\nu^B$ are defined as morphisms in $\Xx$ and $\Yy$ respectively,
$$
\nu^A X=\eta^AX :X\to AX, \qquad
\nu^BY=\eta^BY :Y\to BY.
$$
For any $\X\in\EMM\TT$, the counits $\zeta^A$, $\zeta^B$ are given by the following morphisms in $\EMM\TT$,
\begin{eqnarray*}
\zeta^A_\X=(f^A\X,g^A\X):G_AU_A\X&\to& \X,\\
((AX,\m^AX),(\hT X,\hlambda X),\ev X, \hrho X)&\to& ((X,\rho^X),(Y,\rho^Y),v,w),\\
 f^A \X=\rho^X:AX\to X,&\ & g^A \X=w:\hT X\to Y,\\
\zeta^B \X=(f^B \X,g^B \X):G_BU_B\X&\to&\X,\\
((TY,\lambda Y),(BY,\m^BY),\rho Y,\hev Y)&\to& ((X,\rho^X),(Y,\rho^Y),v,w),\\
 f^B X=v:TY\to X,&\ &g^B \X=\rho^Y:BY\to Y.
\end{eqnarray*}
To check that $\zeta^A \X$ is  a morphism in $\EMM\TT$, one has to verify that diagrams \eqref{fglin} and \eqref{fgcomp} commute. The left hand side of \eqref{fglin} holds, since $f^A\X=\rho^X$ is canonically a morphism in $\EM\Xx\A$, the right hand side holds since $g^A\X=w$ is a morphism in $\EM\Yy\B$ by definition. The left hand side of \eqref{fgcomp} is exactly the left hand side of \eqref{vwass2}, and the right hand side of \eqref{fgcomp} is precisely the left hand side of \eqref{vwass1}. Similarly one checks that $\zeta^B\X$ is a morphism in $\EMM\TT$. 

Now take any object $X\in\Xx$. The first triangular identity translates to the following diagram 
\[
\xymatrix{
G_AX \ar[rrd]^-{(A\eta^AX,\hT\eta^AX)} \ar@{=}[dd]\\
&& G_AU_AG_AX \ar[dll]^-{(\m^AX,\hrho X)} ,\\
G_AX
}
\]
which commutes by the unit properties of the monad $\A$ and the bialgebra functor $\hT$ (i.e. $\m^A\circ A\eta^A= A$ and $\hrho\circ \hT \eta^A=\hT$). For the second triangular identity,  take any $\X\in\EMM{\TT}$ and consider the diagram
\[
\xymatrix{
U_A\X=X \ar[drr]^{\eta^AX}\ar@{=}[dd] \\
&& U_AG_AU_A\X=AX \ar[dll]^-{\rho^X}, \\
U_A\X=X
}
\]
which commutes by the unit property of the $\A$-algebra $(X,\rho^X)$. In the same way one verifies that $(G_B,U_B)$ is an adjoint pair.
\end{proof}

Let $\phi=(\phi^1,\phi^2,\phi^3,\phi^4):\TT\to\TT'$ be a morphism of Morita contexts. We need to construct a morphism $\Gamma(\phi):\Gamma(\TT)\to\Gamma(\TT')$ in $\Adj(\Xx,\Yy)$, i.e.\ a functor $F=\Gamma(\phi):\EMM{\TT'}\to \EMM{\TT}$ such that $U_A F= U'_A$ and $U_B F= U'_B$. It is well-known that a morphism of monads $\phi^1:\A\to \A'$ induces a functor $\Phi_1:\EM \Xx{\A'}\to \EM\Xx\A$, given by $\Phi_1(X,\rho^{X})=(X,\rho^X\circ \phi^1{X})$, for all objects $(X,\rho^X)\in \EM \Xx{\A'}$, and $\Phi_1f=f$ for all morphisms $f$ in $\EM \Xx{\A'}$ (the naturality of $\phi^1$ implies  that $f$ is indeed a morphism in $\EM\Xx\A$). Similarly, a morphism of monads $\phi^2:\B\to \B'$ induces a functor $\Phi_2:\EM \Yy{\B'}\to\EM\Yy\B$. Take any $\X=((X,\rho^X),(Y,\rho^Y),v,w)\in\EMM{\TT'}$ and  define 
$$F\X=((X,\rho^X\circ \phi^1{X}),(Y,\rho^Y\circ \phi^2Y),v\circ\phi^3Y,w\circ\phi^4X).$$
To check diagrams \eqref{vwlin}, \eqref{vwass1}, \eqref{vwass2}, one has to rely on the naturality of $\phi^1,\phi^2,\phi^3,\phi^4$, and their properties as morphisms of monads and bialgebras.

For a morphism $(f,g):\X\to\X'$ in $\EMM{\TT'}$,  define $F(f,g)=(f,g)$. Then, by construction (or by naturality of $\phi^1$ and $\phi^2$), $f$ is a morphism in $\EM\Xx\A$ and $g$ is a morphism in $\EM\Yy\B$. Diagram \eqref{fgcomp} for $\F(f,g)$ to be a morphism in $\EMM{\TT}$ follows by the naturality of $\phi^3$ and $\phi^4$, combined with the corresponding diagram for  $(f,g)$ as a morphism in $\EMM{\TT'}$.

Finally, the construction of $F$ immediately implies that,  for any $\X\in\EMM{\TT'}$, $U_A F\X=U'_A\X=X$ and $U_B F\X=U'_B\X=Y$.
This completes the construction of a functor $\Gamma: \Mor(\Xx,\Yy)\to \Adj(\Xx,\Yy)$.

\subsection {Every Morita context arises from a double adjunction}\label{sec.mor.from.double} Here we prove that starting with a Morita context and performing  subsequent constructions of a double adjunction and a Morita context gives back the original Morita context, i.e.\ we prove the following
\begin{lemma}
The composite functor $\Upsilon\Gamma$ is the identity functor on $\Mor(\Xx,\Yy)$.
\end{lemma}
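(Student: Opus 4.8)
The plan is to show that $\Upsilon\Gamma$ acts as the identity on both objects and morphisms of $\Mor(\Xx,\Yy)$, by unwinding the two constructions and checking that all the data coincide on the nose (not merely up to isomorphism). The key observation driving everything is that $\Gamma$ sends a Morita context $\TT$ to the \emph{concrete} double adjunction built on the Eilenberg--Moore category $\EMM\TT$, whose forgetful functors $U_A, U_B$ and free functors $G_A, G_B$ are given by explicit formulas; so all the composites $R_A L_A$, $R_B L_B$, $R_A L_B$, $R_B L_A$ appearing in $\Upsilon$ become $U_A G_A$, $U_B G_B$, $U_A G_B$, $U_B G_A$, which I can evaluate directly.

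First I would verify the object-level identity $\Upsilon\Gamma(\TT)=\TT$. Writing $\tt=\Gamma(\TT)=(\EMM\TT,(G_A,U_A),(G_B,U_B))$, the monad produced by $\Upsilon$ on $\Xx$ is $(U_A G_A, U_A\zeta^A G_A, \nu^A)$. Using $G_A X=((AX,\m^A X),\ldots)$ and $U_A\X = X$ from Section~\ref{mor.to.adj}, I get $U_A G_A X = AX$; the unit $\nu^A X = \eta^A X$ recovers $\biota^A$ (since for the free-algebra adjunction the unit is exactly the monad unit), and the multiplication $U_A\zeta^A G_A$ evaluates, via $f^A=\rho^X$ applied to the free algebra $(AX,\m^A X)$, to $\m^A$. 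Hence $\A$ is recovered, and symmetrically $\B$. For the bialgebra functors, $T=U_A G_B$ gives $U_A G_B Y = TY$, so $\T$ is recovered as a functor; I then check that the induced left and right actions $U_A\zeta^A G_B$ and $U_A G_B\nu^B$-type composites reproduce $\lambda$ and $\rho$, again reading the counit components $f^A=v$, $g^B=\rho^Y$ off the formulas for $G_B Y=((TY,\lambda Y),(BY,\m^B Y),\rho Y,\hev Y)$. The same works for $\bhT=U_B G_A$. Finally the pairings: $\ev$ in $\Upsilon(\tt)$ is the composite $U_A\zeta^B G_A$, and evaluating $f^B=v$ at the object $G_A X$ (whose $v$-component is $\ev X$) returns $\ev$ exactly; dually $\hev$ is recovered via $g^A=w$ at $G_B Y$. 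Since all six pieces agree, $\Upsilon\Gamma(\TT)=\TT$.

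Next I would treat morphisms. Given $\phi=(\phi^1,\phi^2,\phi^3,\phi^4):\TT\to\TT'$, the functor $F=\Gamma(\phi):\EMM{\TT'}\to\EMM\TT$ is given explicitly in Section~\ref{mor.to.adj}, and $\Upsilon(F)=(\psi^1,\psi^2,\psi^3,\psi^4)$ is assembled from the natural transformations $a,b$ of \eqref{eq:a,b}. The task is to show each $\psi^i=\phi^i$. Here I would compute $a:G_A\to F G'_A$ from its definition $a=(\zeta^A F G'_A)\circ(G_A\nu'^A)$ and then apply $U_B$ (resp.\ $U_A$) to extract $\psi^i$; the point is that on the free objects $G_A X$ the counit components are precisely the structure maps $\rho^X, w, v, \rho^Y$ that $F$ modifies by $\phi^i$, so the Godement composites collapse to $\phi^i$ after using the triangular identities. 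The main obstacle, and the part deserving genuine care rather than a wave of the hand, is bookkeeping the direction-reversal and the $U_A$-versus-$U_B$ bookkeeping: $\Upsilon$ defines $\phi^3$ from $R_A b$ but $\phi^4$ from $R_B a$, so I must match $\psi^3$ (built from $b$, i.e.\ from $G_B$) against $\phi^3$ and $\psi^4$ (built from $a$) against $\phi^4$, and confirm the off-diagonal pairing in \eqref{moritamorph} is respected. Once the component-wise evaluations are carried out using the explicit counit formulas together with the unit relations $\m^A\circ A\eta^A=A$ and $\hrho\circ\hT\eta^A=\hT$ already used in Lemma~\ref{lem.gu}, the equalities $\psi^i=\phi^i$ follow, and functoriality of $\Upsilon\Gamma$ being the identity is immediate.
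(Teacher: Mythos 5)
Your proposal is correct and takes essentially the same approach as the paper: the object-level identity (which the paper declares ``easy and left to the reader'') is checked by reading counit components off the free objects $G_AX$, $G_BY$, and the morphism-level identity is precisely the paper's own computation---form $a$ and $b$ from the unit/counit formulas, extract the four components with $U_A$ and $U_B$ (matching $\psi^3$ with $b$ and $\psi^4$ with $a$, as you note), and collapse using naturality of the $\phi^i$ together with the unit relations $\m^{A'}\circ A'\biota^{A'}=A'$ and $\hrho'\circ\hT'\biota^{A'}=\hT'$. The only blemishes are notational slips in your object-level check (the right $\B$-action on $T$ is $U_A\zeta^B G_B$ rather than a ``$U_AG_B\nu^B$-type'' composite, and $v$ is the $f^B$-component of $\zeta^B$, not $f^A$), which do not affect the argument.
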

\begin{proof}
The computation that, for any Morita context $\TT\in\Mor(\Xx,\Yy)$,  $\Upsilon\Gamma\TT = \TT$ is easy and left to the reader. Consider two Morita contexts  $\TT=(\A,\B,\T,\bhT,\ev,\hev)$, $\TT'=(\A',\B',\T',\bhT',\ev',\hev')$ and a morphism $\phi = (\phi^1,\phi^2,\phi^3,\phi^4) : \TT \to \TT'$. Write
$$
(\phi^1_{\Gamma(\phi)}, \phi^2_{\Gamma(\phi)}, \phi^3_{\Gamma(\phi)}, \phi^3_{\Gamma(\phi)}) := \Upsilon\Gamma\phi.
$$
In view of the definition of functor $\Upsilon$ in Section~\ref{adj.to.mor}, to compute the $\phi^i_{\Gamma(\phi)}$ one first needs to compute natural transformations $a$, $b$ (see equations \eqref{eq:a,b} in Section~\ref{sec.adj}) corresponding to double adjunctions $\Gamma\TT = (\EMM{\TT}, (G_A,U_A), (G_B,U_B))$  and $\Gamma\TT'= (\EMM{\TT'}, (G'_A,U'_A), (G'_B,U'_B))$; see Section~\ref{mor.to.adj}. These are given by
$$
a = \zeta^A\Gamma(\phi)G'_A\circ G_A\nu'^A, \qquad b = \zeta^B\Gamma(\phi)G'_B\circ G_B\nu'^B
$$
where $\zeta^A$ is the counit of adjunction $(G_A,U_A)$, $\zeta^B$ is the counit of adjunction $(G_B,U_B)$, $\nu'^A$ is the unit of adjunction $(G'_A,U'_A)$ and $\nu'^B$ is the unit of adjunction $(G'_B,U'_B)$. Since, for all $\X=((X,\rho^X),(Y,\rho^Y),v,w)\in\EMM{\TT'}$, 
$$\Gamma(\phi)(\X)=((X,\rho^X\circ \phi^1{X}),(Y,\rho^Y\circ \phi^2Y),v\circ\phi^3Y,w\circ\phi^4X),
$$
we obtain
$$
\zeta^A\Gamma(\phi)G'_A = (\m^{A'} \circ \phi^1 A', \hrho' \circ \phi^4 A'), \quad G_A\nu'^A = (A\biota^{A'}, \hT\biota^{A'}).
$$
Thus 
$$
\phi^1_{\Gamma(\phi)} = U_A a = \m^{A'}\circ \phi^1 A'\circ A\biota^{A'} = \m^{A'}\circ A'\biota^{A'}\circ \phi^1 = \phi^1,
$$
where the second equality follows by the naturality of $\phi^1$, while the third one is a consequence of the unitality of a monad. Similarly, 
$$
\phi^4_{\Gamma(\phi)} = U_B a = \hrho'\circ \phi^4 A'\circ \hT\biota^{A'} = \hrho'\circ \hT'\biota^{A'}\circ \phi^4 = \phi^4.
$$
The identities  $\phi^2_{\Gamma(\phi)} = \phi^2$ and $\phi^3_{\Gamma(\phi)}=\phi^3$  are obtained by symmetric calculations.
\end{proof}

The just computed identification thus defines a natural transformation (the identity transformation)
$$
\lambda: \id_{\Mor(\Xx,\Yy)} \to \Upsilon\Gamma.
$$

\subsection {The comparison functor}\label{sec.comp}

Consider  a double adjunction on  categories $\Xx$ and $\Yy$, i.e.\ an object  $\tt=(\Tt,(L_A,R_A),(L_B,R_B))$ in $\Adj(\Xx,\Yy)$. Let $\Upsilon\tt=\TT$ be the associated Morita context on $\Xx$ and $\Yy$, and consider $\EMM{\TT}$, the Eilenberg-Moore category of representations of $\TT$. In this section we construct a \emph{comparison functor }
$$K:\Tt\to \EMM{\TT}.$$

For any object $Z\in\Tt$, define
$$K(Z)=((R_AZ,R_A\varepsilon^A Z),(R_BZ,R_B\varepsilon^B Z),R_A\varepsilon^B Z,R_B\varepsilon^A Z).$$
The first two components in $K(Z)$, that is, $(R_AZ,R_A\varepsilon^A Z)\in\EM\Xx\A$ and $(R_BZ,R_B\varepsilon^B Z)\in\EM\Yy\B$ are an application of the comparison functors $K_A:\Tt\to \EM\Xx\A$ and $K_B:\Tt\to \EM\Yy\B$, corresponding to the adjunctions $(L_A,R_A)$ and $(L_B,R_B)$ respectively (see Section~\ref{sec.eil-moo}). Obviously,
\begin{eqnarray*}
R_A\varepsilon^B Z:R_AL_BR_BZ\to R_AZ & {\rm and} & R_B\varepsilon^A Z:R_BL_AL_BZ\to R_BZ
\end{eqnarray*}
are well-defined. They satisfy conditions \eqref{vwlin}, \eqref{vwass1} and \eqref{vwass2} by the naturality of counits.

For a morphism $f:Z\to Z'$ in $\Tt$, define 
$$K(f)=(R_Af,R_Bf).$$
In view of the definition of the comparison functors $K_A$ and $K_B$, it is clear that $R_Af=K_Af$ and $R_Bf=K_Bf$, so $R_Af$ and $R_Bf$ are morphisms in $\EM\Xx\A$ and $\EM\Yy\B$ respectively. Diagrams \eqref{fgcomp} follow by the naturality of $\varepsilon^A$ and $\varepsilon^B$, respectively.

\begin{definition}\label{def.moritable}
Let  $\tt=(\Tt,(L_A,R_A),(L_B,R_B))$ be an object in $\Adj(\Xx,\Yy)$. The pair $(R_A,R_B)$ is said to be  \emph{moritable} if and only if the comparison functor $K$ is an equivalence of categories.
\end{definition}

\begin{proposition}\label{prop.adjoint}
$(\Gamma, \Upsilon)$ is an adjoint pair and $\Gamma$ is a full and faithful functor.
\end{proposition}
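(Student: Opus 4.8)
The plan is to establish the adjunction $(\Gamma,\Upsilon)$ by exhibiting a unit and counit satisfying the triangular identities, and then to deduce full faithfulness of $\Gamma$ from the unit being an isomorphism. The previous lemma already provides the unit: the natural transformation $\lambda:\id_{\Mor(\Xx,\Yy)}\to\Upsilon\Gamma$, which is in fact the identity. So the remaining ingredient is the counit $\epsilon:\Gamma\Upsilon\to\id_{\Adj(\Xx,\Yy)}$, and the natural candidate is precisely the comparison functor $K$ constructed in Section~\ref{sec.comp}. First I would verify that for each $\tt=(\Tt,(L_A,R_A),(L_B,R_B))$ the comparison functor $K:\Tt\to\EMM{\Upsilon\tt}$ really is a morphism in $\Adj(\Xx,\Yy)$ from $\Gamma\Upsilon\tt$ to $\tt$; this amounts to checking $U_AK=R_A$ and $U_BK=R_B$, which is immediate from the definition $K(Z)=((R_AZ,\ldots),(R_BZ,\ldots),\ldots)$ since the forgetful functors $U_A,U_B$ just project onto the first two components. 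I would also confirm naturality of $K$ in $\tt$, which follows from naturality of the counits $\varepsilon^A,\varepsilon^B$ against morphisms $F$ in $\Adj(\Xx,\Yy)$.

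Next I would check the two triangular identities. For the identity $\Upsilon K\circ\lambda\Upsilon=\id_\Upsilon$, I would compute $\Upsilon(K)$ on the Morita context $\Upsilon\tt$ using the explicit formulas for $\Upsilon$ on morphisms from Section~\ref{adj.to.mor}: the monad and bialgebra components of $\Upsilon(K)$ are obtained from the transformations $a,b$ associated with $K$ via \eqref{eq:a,b}, and a direct calculation using the triangular identities of the adjunctions $(L_A,R_A)$, $(L_B,R_B)$ should show these reduce to identities, so that composing with $\lambda\Upsilon=\id$ gives the identity on $\Upsilon$. For the other identity $K\Gamma\circ\Gamma\lambda=\id_\Gamma$, since $\lambda$ is the identity transformation it suffices to show that $K$ evaluated at the double adjunction $\Gamma\TT$ is the identity functor on $\EMM\TT$; I would verify this by computing $K(\X)$ for $\X\in\EMM\TT=\Tt$ and observing that $R_A\X=X$, $R_B\X=Y$ (the forgetful functors), while the counit-based structure maps $R_A\varepsilon^A\X$, $R_B\varepsilon^B\X$, $R_A\varepsilon^B\X$, $R_B\varepsilon^A\X$ recover exactly $\rho^X$, $\rho^Y$, $v$, $w$ by the description of the counits $\zeta^A,\zeta^B$ in Lemma~\ref{lem.gu}.

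With unit and counit in hand satisfying the triangular identities, $(\Gamma,\Upsilon)$ is an adjoint pair. Finally, full faithfulness of $\Gamma$ follows from the general categorical fact that a left adjoint is full and faithful if and only if its unit is an isomorphism; here the unit $\lambda$ is literally the identity transformation by the preceding lemma, hence certainly an isomorphism, so $\Gamma$ is full and faithful.

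I expect the main obstacle to be the verification of the first triangular identity, namely that $\Upsilon(K)$ is the identity morphism of Morita contexts. This requires unwinding the definition of $\Upsilon$ on the specific morphism $K$, expressing the associated natural transformations $a,b$ in terms of $\eta^A,\eta^B,\varepsilon^A,\varepsilon^B$, and then collapsing the resulting composites via the triangular identities of the two original adjunctions. The bookkeeping of whiskered natural transformations across the functors $R_A,L_A,R_B,L_B$ is where errors are easiest to make, though no genuinely new idea beyond the adjunction triangle equations is needed; everything else reduces to the already-established Lemma~\ref{lem.gu} and the computation of $\lambda$.
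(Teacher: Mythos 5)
Your proposal is correct and takes essentially the same route as the paper: the identity transformation $\lambda$ as unit, the comparison functor $K$ as the counit $\kappa:\Gamma\Upsilon\to\id$, the triangle identity $\Upsilon\kappa\circ\lambda\Upsilon$ verified by collapsing the transformations $a,b$ associated with $K$ via the triangular identities of $(L_A,R_A)$ and $(L_B,R_B)$, and full faithfulness of $\Gamma$ from the unit being an isomorphism. The only difference is cosmetic: you spell out the other triangle identity (that $K$ evaluated at $\Gamma\TT$ is the identity on $\EMM\TT$, recovering $\rho^X,\rho^Y,v,w$ from the counits $\zeta^A,\zeta^B$ of Lemma~\ref{lem.gu}), which the paper dismisses as immediate.
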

\begin{proof}
Note that the definition of the comparison functor $K$ immediately implies that $K$ is a morphism in $\Adj(\Xx,\Yy)$. Furthermore, $K$ can be defined for any $\tt\in\Adj(\Xx,\Yy)$. We claim that  the assignment $\tt \mapsto (K:\Tt\to \EMM{\TT})$ induces a natural transformation 
$$\kappa:\Gamma\Upsilon \to \id_{\Adj(\Xx,\Yy)}.$$ 
Take double adjunctions  $\tt=(\Tt,(L_A,R_A),(L_B,R_B))$, $\tt'=(\Tt',(L'_A,R'_A),(L'_B,R'_B))$ and a functor $F:\Tt \to \Tt'$ such that $R_AF = R_A'$ and $R_BF=R'_B$ (in other words, take a morphism in $\Adj(\Xx,\Yy)$). Let $K: \Tt \to \EMM{\Upsilon\tt}$ and $K': \Tt' \to \EMM{\Upsilon\tt'}$ be the associated comparison functors. The naturality of $\kappa$ is  equivalent to the commutativity of the following diagram
$$
\xymatrix{ \Tt' \ar[rr]^-F\ar[d]_{K'} && \Tt \ar[d]^K \\  \EMM{\Upsilon\tt'} \ar[rr]^-{\Gamma\Upsilon (F)} &&  \EMM{\Upsilon\tt} .}
$$
For any object $Z$ of $\Tt'$, 
$$
KFZ = ((R_AFZ,R_A\varepsilon^AFZ),(R_BZ,R_B\varepsilon^BFZ),R_A\varepsilon^BFZ,R_B\varepsilon^AFZ),
$$
and 
\begin{eqnarray*}
(\Gamma\Upsilon(F) K')(Z)  &=& ((R'_A Z, R'_A\eps'^AZ\circ R_AaR'_AZ), (R'_B Z, R'_B\eps'^BZ\circ R_BbR'_BZ),\\
&& R'_A\eps'^BZ\circ R_AbR'_BZ, R'_B\eps'^AZ\circ R_BaR'_AZ),
\end{eqnarray*}
where $a$ and $b$ are natural transformations  \eqref{eq:a,b} associated to a morphism of double adjunctions $F$. The equalities $R_AF = R_A'$ and $R_BF=R'_B$ together with equations \eqref{adjmorph2} yield the required equality
$K F = \Gamma\Upsilon(F) K'$.

Let $\lambda: \id_{\Mor(\Xx,\Yy)} \to \Upsilon\Gamma$ be the natural (identity) transformation described in Section~\ref{sec.mor.from.double}. That the composite $\kappa\Gamma\circ\Gamma\lambda$ is the identity natural transformation $\Gamma \to \Gamma$ is immediate. To compute the other composite $\Upsilon\kappa \circ \lambda\Upsilon : \Upsilon \to \Upsilon$, take a double adjunction $\tt=(\Tt,(L_A,R_A),(L_B,R_B))$, so that
$$
\Upsilon\tt = (R_AL_A, R_BL_B, R_AL_B,R_BL_A, R_A\eps^BL_A, R_B\eps^AL_B).
$$
Then $\kappa \tt = K: \Tt\to \EMM{\Upsilon\tt}$ is the comparison functor, and hence
$$
\Upsilon\kappa\tt = \Upsilon(K) = (\phi^1,\phi^2,\phi^3,\phi^4),
$$
where 
$$\phi^1 = U_A\zeta^A KL_A\circ U_AG_A\eta^A, \qquad \phi^2 = U_B\zeta^B KL_B\circ U_BG_B\eta^B, 
$$
$$
\phi^3 = U_A\zeta^B KL_B\circ U_AG_B\eta^B,  \qquad \phi^4 = U_B\zeta^A KL_A\circ U_BG_A\eta^A.
$$
Here $(G_A,U_A)$, $(G_B, U_B)$ are adjoint pairs  given by 
$$
(\EMM{\Upsilon\tt}, (G_A,U_A), (G_B, U_B)):= \Gamma(R_AL_A, R_BL_B, R_AL_B,R_BL_A, R_A\eps^BL_A, R_B\eps^AL_B),
$$
 so
$$
G_A\eta^A = (R_AL_A\eta^A, R_BL_A\eta^A), \qquad G_B\eta^B = (R_BL_B\eta^B, R_AL_B\eta^B).
$$
Furthermore, using the definition of the comparison functor (applied to $L_AX$ and $L_BY$, for any objects $X\in \Xx$, $Y\in \Yy$), we obtain
$$
\zeta^AKL_A = (R_A\eps^AL_A, R_B\eps^BL_A), \qquad \zeta^BKL_B  = (R_B\eps^BL_B, R_A\eps^AL_B).
$$
Therefore,
$$
\phi^1 = R_A\eps^AL_A\circ R_AL_A\eta^A = R_AL_A, \qquad \phi^4 = R_B\eps^AL_A\circ R_BL_A\eta^A = R_BL_A,
$$
since $\eta^A$ is the unit and $\eps^A$ is the counit of the adjunction $(L_A,R_A)$. Similarly, $\phi^2 = R_BL_B$ and $\phi^3 = R_AL_B$. Thus $\Upsilon\kappa$ is the identity natural transformation, and since also $\lambda\Upsilon$ is the identity, so is their composite $\Upsilon\kappa \circ \lambda\Upsilon$. This proves that $\lambda$ is a unit and $\kappa$ is a counit of the adjunction $(\Gamma, \Upsilon)$. Since the unit $\lambda$ is a natural isomorphism,  $\Gamma$ is a full and faithful functor.
\end{proof}

\begin{corollary}\label{cor.equiv}
$(\Gamma, \Upsilon)$ is a pair of inverse equivalences if and only if, for all double adjunctions $\tt=(\Tt,(L_A,R_A),(L_B,R_B))\in \Adj(\Xx,\Yy)$, $(R_A,R_B)$ is a moritable pair.
\end{corollary}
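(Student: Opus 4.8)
The plan is to obtain the corollary immediately from Proposition~\ref{prop.adjoint} by invoking the standard fact that an adjoint pair is a pair of inverse equivalences if and only if both its unit and its counit are natural isomorphisms. Proposition~\ref{prop.adjoint} already presents $(\Gamma,\Upsilon)$ as an adjoint pair whose unit $\lambda:\id_{\Mor(\Xx,\Yy)}\to\Upsilon\Gamma$ is a natural isomorphism (in fact the identity, by Section~\ref{sec.mor.from.double}), and whose counit $\kappa:\Gamma\Upsilon\to\id_{\Adj(\Xx,\Yy)}$ has, at a double adjunction $\tt=(\Tt,(L_A,R_A),(L_B,R_B))$, the comparison functor $K:\Tt\to\EMM{\Upsilon\tt}$ of Section~\ref{sec.comp} as its component. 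Since $\lambda$ is always an isomorphism, the first step is simply to conclude that $(\Gamma,\Upsilon)$ is a pair of inverse equivalences if and only if the counit $\kappa$ is a natural isomorphism.

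The second step is to unwind the condition that $\kappa$ be a natural isomorphism componentwise. A natural transformation valued in $\Adj(\Xx,\Yy)$ is an isomorphism exactly when each of its components is invertible, so $\kappa$ is a natural isomorphism if and only if, for every $\tt\in\Adj(\Xx,\Yy)$, the comparison functor $K=\kappa\tt$ is an isomorphism in $\Adj(\Xx,\Yy)$. By Definition~\ref{def.moritable}, the pair $(R_A,R_B)$ is \emph{moritable} precisely when $K$ is an equivalence of categories. Hence the corollary reduces to the pointwise statement that $\kappa\tt$ is invertible in $\Adj(\Xx,\Yy)$ if and only if its underlying functor $K$ is an equivalence of categories, universally in $\tt$.

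I expect this pointwise identification to be the only delicate point, and thus the main obstacle. One direction is immediate, since an isomorphism in $\Adj(\Xx,\Yy)$ has underlying functor an isomorphism, and \emph{a fortiori} an equivalence, of categories. For the converse I would use that $K$ is by construction already a morphism in $\Adj(\Xx,\Yy)$, commuting strictly with the structural functors of $\Gamma\Upsilon\tt$ (one has $U_AK=R_A$ and $U_BK=R_B$, where $U_A,U_B$ are the forgetful functors of $\EMM{\Upsilon\tt}$); from a quasi-inverse of the equivalence $K$ one then manufactures an inverse morphism in $\Adj(\Xx,\Yy)$, the requisite compatibilities with the double-adjunction data being inherited from those of $K$. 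The care needed is exactly in reconciling the a~priori only pseudo-natural quasi-inverse of $K$ with the strict equalities demanded of morphisms in $\Adj(\Xx,\Yy)$; once this is in place, ``$(R_A,R_B)$ moritable for every $\tt$'' is precisely the componentwise invertibility of $\kappa$, and the corollary follows.
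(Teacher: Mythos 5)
Your opening reduction is exactly the paper's: by Proposition~\ref{prop.adjoint} the unit $\lambda$ of $(\Gamma,\Upsilon)$ is the identity, so $(\Gamma,\Upsilon)$ is a pair of inverse equivalences precisely when the counit $\kappa$ is a natural isomorphism, and the components of $\kappa$ are the comparison functors $K$. The paper's entire proof consists of this observation together with a flat identification of ``$K$ is an equivalence for every $\tt$'' (moritability, Definition~\ref{def.moritable}) with ``$\kappa$ is an isomorphism''; no argument is offered for that identification. So up to and including your second step you are reproducing the paper's argument.

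The divergence is your third step, and there your sketch contains a step that would fail. An isomorphism in $\Adj(\Xx,\Yy)$ is a morphism with a strict two-sided inverse; since composition of morphisms in $\Adj(\Xx,\Yy)$ is composition of the underlying functors and identities are identity functors, the component $\kappa\tt$ is invertible only if the underlying functor $K:\Tt\to\EMM{\Upsilon\tt}$ is an isomorphism of categories, in particular bijective on objects. A quasi-inverse of an equivalence can never be promoted to such a strict inverse unless $K$ is already bijective on objects: the obstruction is not, as you suggest, reconciling the strict equalities $U_AK=R_A$ and $U_BK=R_B$ with a merely pseudo-natural quasi-inverse, but invertibility itself. (Indeed $K$ will generally fail to be surjective on objects: an object of $\EMM{\Upsilon\tt}$ that is only isomorphic to one of the form $KZ$ need not lie in the image of $K$.) Hence the implication ``$K$ an equivalence $\Rightarrow$ $\kappa\tt$ invertible in $\Adj(\Xx,\Yy)$'' cannot be established the way you propose. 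What your analysis has actually uncovered is that this very point is silently elided in the paper: its ``i.e.'' conflates equivalence with isomorphism, and a fully strict reading of the corollary would require either reading moritability as ``$K$ is an isomorphism of categories'' or relaxing what ``pair of inverse equivalences'' (equivalently, invertibility of $\kappa$) is taken to mean. Your instinct that this is the delicate point is correct; the repair you sketch is not available.
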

\begin{proof}
The moritability of each of $(R_A,R_B)$ is paramount to the comparison functor $K$ being an equivalence, for all $\tt \in \Adj(\Xx,\Yy)$, i.e.\ to the natural transformation $\kappa$ in the proof of Proposition~\ref{prop.adjoint} being an isomorphism. Since the latter is the counit of adjunction $(\Gamma, \Upsilon)$, the corollary is an immediate consequence of  Proposition~\ref{prop.adjoint}.
\end{proof}

\subsection {Moritability}
The aim of this section is to determine, when a pair of functors is moritable in the sense of Definition~\ref{def.moritable}.
We begin with the following simple
\begin{lemma}\label{lem.iso}
Let $\TT$ be an object in $\Mor(\Xx,\Yy)$ and let $(f,g)$ be a morphism in $\EMM\TT$. Then $(f,g)$ is an isomorphism in $\EMM\TT$ if and only if  $f$ is an isomorphism in $\Xx$ and $g$ is an isomorphism in $\Yy$.
\end{lemma}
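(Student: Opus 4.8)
The forward implication is immediate. The forgetful functors $U_A:\EMM\TT\to\Xx$ and $U_B:\EMM\TT\to\Yy$, which send a morphism $(f,g)$ to $f$ and to $g$ respectively, preserve isomorphisms (as any functor does). Hence if $(f,g)$ is an isomorphism in $\EMM\TT$, then $f=U_A(f,g)$ is an isomorphism in $\Xx$ and $g=U_B(f,g)$ is an isomorphism in $\Yy$.

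For the converse, suppose $f:X\to X'$ and $g:Y\to Y'$ are isomorphisms in $\Xx$ and $\Yy$, where $(f,g):\X\to\X'$ with $\X=((X,\rho^X),(Y,\rho^Y),v,w)$ and $\X'=((X',\rho^{X'}),(Y',\rho^{Y'}),v',w')$. The plan is to exhibit $(f^{-1},g^{-1})$ as a morphism $\X'\to\X$ in $\EMM\TT$; since composition and identities in $\EMM\TT$ are computed componentwise out of $\EM\Xx\A$ and $\EM\Yy\B$, such a pair is automatically a two-sided inverse of $(f,g)$, which proves the claim. Two things must be verified.

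First I would check that $f^{-1}$ is $\A$-linear and $g^{-1}$ is $\B$-linear, i.e.\ that the diagrams \eqref{fglin} hold for $(f^{-1},g^{-1})$. This is the familiar fact that the inverse of an invertible algebra morphism is again an algebra morphism: from $f\circ\rho^X=\rho^{X'}\circ Af$ one derives $\rho^X\circ Af^{-1}=f^{-1}\circ\rho^{X'}$ by composing with $f^{-1}$ on the left and $Af^{-1}$ on the right and using the functoriality identity $Af\circ Af^{-1}=A\id=\id$; the argument for $g^{-1}$ over $\B$ is identical. Second I would verify the compatibility conditions \eqref{fgcomp} for $(f^{-1},g^{-1})$. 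Starting from $g\circ w=w'\circ\hT f$ (the first diagram in \eqref{fgcomp} for $(f,g)$), composing with $g^{-1}$ on the left and $\hT f^{-1}$ on the right and using $\hT f\circ\hT f^{-1}=\hT\id=\id$ gives $w\circ\hT f^{-1}=g^{-1}\circ w'$, which is exactly the required condition for $(f^{-1},g^{-1})$; the second diagram follows symmetrically from $f\circ v=v'\circ Tg$ by functoriality of $T$.

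The computation is entirely routine and I do not expect a genuine obstacle. The only point demanding care is that a morphism of $\EMM\TT$ carries the two extra compatibility constraints \eqref{fgcomp} on top of the underlying algebra-morphism conditions, so passing to inverses requires re-establishing both \eqref{fglin} and \eqref{fgcomp} for the candidate inverse; each follows mechanically from functoriality of $A$, $B$, $T$, $\hT$ together with the corresponding identity for $(f,g)$.
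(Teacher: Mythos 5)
Your proposal is correct and follows essentially the same route as the paper: the forward direction via the fact that (forgetful) functors preserve isomorphisms, and the converse by applying $f^{-1}$ and $g^{-1}$ to the equalities \eqref{fglin} and \eqref{fgcomp} to show that $(f^{-1},g^{-1})$ is a morphism in $\EMM\TT$. The paper states this tersely ("one immediately obtains"), and your write-up simply supplies the routine functoriality computations it leaves implicit.
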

\begin{proof} If $(f,g)$ is an isomorphism in $\EMM\TT$, then clearly $f$ and $g$ are isomorphisms (as all functors, in particular forgetful functors, preserve isomorphisms). Conversely, let $f^{-1}$ be the inverse (in $\Xx$) of $f$ and $g^{-1}$ be the inverse of $g$ (in $\Yy$). By applying $f^{-1}$, $g^{-1}$ to both sides of equalities described by diagrams \eqref{fglin} and \eqref{fgcomp} one immediately obtains that $(f^{-1}, g^{-1})$ is a morphism in $\EMM\TT$.
\end{proof}

From now on we fix a double adjunction $\tt=(\Tt,(L_A,R_A),(L_B,R_B))$ on $\Xx$ and $\Yy$ (with counits $\eps^A$, $\eps^B$ and units $\eta^A$, $\eta^B$),  and set 
$$
\TT :=  \Upsilon(\tt) = (R_AL_A, R_BL_B, R_AL_B,R_BL_A, R_A\eps^BL_A, R_B\eps^AL_B)
$$
to be the corresponding Morita context. $K :\Tt\to \EMM\TT$ is the comparison functor. The aim of this section is to determine, when $K$ is an equivalence of categories.

\begin{proposition} \label{prop.ref}
Suppose that $(R_A, R_B)$ is a moritable pair and let $f$ be a morphism in $\Tt$. If both $R_Af$ and $R_Bf$ are isomorphisms, then so is $f$.
\end{proposition}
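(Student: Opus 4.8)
The plan is to deduce this directly from the moritability hypothesis together with Lemma~\ref{lem.iso}, so that essentially no new computation is required. The key observation is the description of the comparison functor on morphisms: for $f:Z\to Z'$ in $\Tt$ one has $K(f)=(R_Af,R_Bf)$, and this morphism lives in $\EMM\TT$, where $\TT=\Upsilon(\tt)$. Thus the two hypotheses, that $R_Af$ is an isomorphism in $\Xx$ and $R_Bf$ is an isomorphism in $\Yy$, are exactly the componentwise conditions governed by Lemma~\ref{lem.iso}.

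First I would apply Lemma~\ref{lem.iso} to the Morita context $\TT$ and the morphism $(R_Af,R_Bf)$ in $\EMM\TT$. Since by assumption $R_Af$ and $R_Bf$ are isomorphisms in $\Xx$ and $\Yy$ respectively, that lemma gives at once that $K(f)=(R_Af,R_Bf)$ is an isomorphism in $\EMM\TT$. This reduces the problem to showing that the isomorphy of $K(f)$ propagates back to $f$.

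Next I would invoke the moritability of $(R_A,R_B)$, which by Definition~\ref{def.moritable} means precisely that $K:\Tt\to\EMM\TT$ is an equivalence of categories, hence in particular fully faithful. A fully faithful functor reflects isomorphisms: if $K(f)$ has an inverse $h$, fullness yields a morphism $g:Z'\to Z$ with $K(g)=h$, and faithfulness then forces $gf=\id$ and $fg=\id$, since $K(gf)=h\circ K(f)=\id=K(\id)$ and symmetrically for $fg$. Applying this to our $f$, whose image $K(f)$ has just been shown to be an isomorphism, concludes that $f$ is an isomorphism in $\Tt$.

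I do not anticipate a genuine obstacle, as the statement is a formal consequence of the two cited facts; it amounts to the standard slogan that an equivalence reflects isomorphisms, refracted through the componentwise criterion of Lemma~\ref{lem.iso}. The only points meriting any care are the correct identification $K(f)=(R_Af,R_Bf)$ and the observation that moritability is being used only through full faithfulness of $K$ (its essential surjectivity is not needed here), so that the proposition in fact holds under the weaker assumption that $K$ reflects isomorphisms.
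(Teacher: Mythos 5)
Your proposal is correct and follows essentially the same route as the paper: both apply Lemma~\ref{lem.iso} to $K(f)=(R_Af,R_Bf)$ to conclude that $Kf$ is an isomorphism in $\EMM\TT$, and then use the fact that an equivalence of categories reflects isomorphisms. Your extra observations (spelling out why a fully faithful functor reflects isomorphisms, and noting that only full faithfulness of $K$ is needed) are accurate refinements of the same argument rather than a different approach.
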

\begin{proof}
Note that $R_A = U_AK$ and $R_B = U_BK$, where $U_A: \EMM\TT\to \Xx$, $U_B: \EMM\TT\to \Yy$ are forgetful functors. If $R_Af$ and $R_Bf$ are isomorphisms, then, by Lemma~\ref{lem.iso} also $Kf$ is an isomorphism. Since an equivalence of categories reflects isomorphisms, also $f$ is an isomorphism.
\end{proof}

\begin{definition}\label{def.ref}
The pair $(R_A, R_B)$ is said to {\em reflect isomorphisms} if the fact that both $R_Af$ and $R_Bf$ are isomorphisms for a morphism $f\in \Tt$ implies that $f$ is an isomorphism. 
That is, the pair $(R_A,R_B)$ reflects isomorphisms if and only if the induced functor into the product category $\langle R_A,R_B\rangle : \Tt \to \Xx\times  \Yy$, $Z\mapsto (R_AZ, R_BZ)$, reflects isomorphisms.
\end{definition}

Note that if $R_A$ or $R_B$ reflects isomorphisms, then the pair $(R_A,R_B)$ reflects isomorphisms, but not the other way round. 
By Proposition \ref{prop.ref}, a moritable pair $(R_A,R_B)$ reflects isomorphisms.

To analyse the comparison functor $K$ further we assume the existence of particular colimits in $\Tt$. For any object $\X=((X,\rho^X),(Y,\rho^Y),v,w)\in\EMM{\TT}$ consider the following diagram
\begin{equation}\label{diag.col}
\xymatrix{L_AR_AL_A X\ar@<-.6ex>[dd]_{L_A\rho^X} \ar@<.6ex>[dd]^{\eps^AL_AX} && L_BR_BL_A X\ar[lldd]_{\eps^BL_AX} \ar[rrrdd]^{L_Bw} & L_AR_AL_BY \ar[rrdd]^{\eps^AL_BY} \ar[llldd]_{L_Av} && L_BR_BL_BY  \ar@<-.6ex>[dd]_{\eps^BL_BY} \ar@<.6ex>[dd]^{L_B\rho^Y}\\
&&&&&\\
L_AX &&&&& L_BY.}
\end{equation} 
From this point until Theorem~\ref{thm.Beck} assume that $\Tt$ has colimits of all such diagrams, and let 
$$
(D\X,\; d^A\X: L_AX\to D\X,\; d^B\X : L_BY\to D\X),
$$
be the colimit of  \eqref{diag.col}. Any morphism $(f,g) : \X \to \X'$ in $\EMM{\TT}$ determines a morphism of diagrams \eqref{diag.col} (i.e.\ it induces a natural transformation between functors from a six-object category to $\Tt$ that define diagrams \eqref{diag.col}) by applying suitable combinations of the $L$ and $R$ to $f$ and $g$. Therefore, by the universality of colimits,  there is a unique morphism $D(f,g): D\X\to D\X'$ in $\Tt$ which  satisfies the following identies
\begin{equation}\label{eq.D(f,g)}
d^A \X'\circ L_Af = D(f,g)\circ d^A\X, \qquad d^B \X'\circ L_Bf = D(f,g)\circ d^B\X.
\end{equation}
This construction yields a functor
$$
D: \EMM\TT \to \Tt, \qquad \X\mapsto D\X, \qquad (f,g)\mapsto D(f,g).
$$
\begin{proposition}\label{prop.adjoint.comp}
The functor $D$ is the left adjoint of the comparison functor $K$.
\end{proposition}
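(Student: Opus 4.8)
The plan is to exhibit a bijection
$$
\hom{\Tt}{D\X}{Z}\;\cong\;\hom{\EMM\TT}{\X}{KZ},
$$
natural in $\X=((X,\rho^X),(Y,\rho^Y),v,w)\in\EMM\TT$ and $Z\in\Tt$, by describing each side as a set of pairs of morphisms cut out by four compatibility conditions and then matching these conditions against one another through the hom-set bijections of the two adjunctions. First I would unravel the left-hand side. Since the two colimit legs $d^A\X:L_AX\to D\X$ and $d^B\X:L_BY\to D\X$ determine the whole cocone over \eqref{diag.col}, the universal property of $D\X$ identifies a morphism $D\X\to Z$ with a pair $p:L_AX\to Z$, $q:L_BY\to Z$ equalising the two arrows out of each of the four top vertices of \eqref{diag.col}, namely (1) $p\circ L_A\rho^X=p\circ\eps^AL_AX$; (2) $p\circ\eps^BL_AX=q\circ L_Bw$; (3) $p\circ L_Av=q\circ\eps^AL_BY$; (4) $q\circ\eps^BL_BY=q\circ L_B\rho^Y$.

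Next I would unravel the right-hand side. Recalling from Section~\ref{sec.comp} that $KZ=((R_AZ,R_A\eps^AZ),(R_BZ,R_B\eps^BZ),R_A\eps^BZ,R_B\eps^AZ)$, a morphism $\X\to KZ$ in $\EMM\TT$ is, by \eqref{fglin} and \eqref{fgcomp}, a pair $(\bar p,\bar q)$ with $\bar p:X\to R_AZ$ a morphism of $\A$-algebras and $\bar q:Y\to R_BZ$ a morphism of $\B$-algebras, subject to (i) $\bar p\circ\rho^X=R_A\eps^AZ\circ R_AL_A\bar p$; (ii) $\bar q\circ\rho^Y=R_B\eps^BZ\circ R_BL_B\bar q$; (iii) $R_B\eps^AZ\circ R_BL_A\bar p=\bar q\circ w$; (iv) $R_A\eps^BZ\circ R_AL_B\bar q=\bar p\circ v$.

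The heart of the proof is to transport $(p,q)$ to $(\bar p,\bar q)$ along the adjunction transposes $\bar p:=R_Ap\circ\eta^AX$ (with inverse $p=\eps^AZ\circ L_A\bar p$) and $\bar q:=R_Bq\circ\eta^BY$, and to verify that the system (1)--(4) is carried exactly onto the system (i)--(iv). I expect the verification to split into two flavours. The two ``unmixed'' conditions (1) and (4) I would treat by applying the bijection $\hom{\Tt}{L_AX}{Z}\cong\hom{\Xx}{X}{R_AZ}$ (respectively its $B$-analogue) at the object $R_AL_AX$: naturality sends $p\circ L_A\rho^X$ to $\bar p\circ\rho^X$, while $p\circ\eps^AL_AX$ is sent to $R_A\eps^AZ\circ R_AL_A\bar p$ after one use of a triangular identity for $(L_A,R_A)$ (equivalently, of the naturality of $\eps^A$); this yields $(1)\Lra(\mathrm i)$ and, symmetrically, $(4)\Lra(\mathrm{ii})$. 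The two ``mixed'' conditions (2) and (3) I would treat by applying the $B$-transpose at $R_BL_AX$ and the $A$-transpose at $R_AL_BY$ respectively; here the identification is even more direct, since the image of $p\circ\eps^BL_AX$ is $R_Bp$ (by the triangular identity for $(L_B,R_B)$), which by the inverse formula $p=\eps^AZ\circ L_A\bar p$ equals $R_B\eps^AZ\circ R_BL_A\bar p$, the left-hand side of (iii), whereas $q\circ L_Bw$ transposes to $\bar q\circ w$ by naturality; this gives $(2)\Lra(\mathrm{iii})$ and, likewise, $(3)\Lra(\mathrm{iv})$.

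The main obstacle is exactly this middle bookkeeping: keeping straight which of the two transposes is applied at which object, and checking that every counit arrow appearing in \eqref{diag.col} collapses, via a triangular identity, onto the matching structure map of $KZ$. Once the four equivalences are secured, the product of the two adjunction bijections restricts to the desired bijection $\hom{\Tt}{D\X}{Z}\cong\hom{\EMM\TT}{\X}{KZ}$. Finally I would check naturality: naturality in $Z$ is inherited from that of the two adjunction transposes, while naturality in $\X$ follows from the defining identities \eqref{eq.D(f,g)} of $D$ on morphisms together with the universal property of the colimits $D\X$. This exhibits $D$ as left adjoint to the comparison functor $K$.
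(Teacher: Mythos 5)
Your proof is correct, and it takes a genuinely different (though equivalent) route from the paper's. The paper constructs the counit and unit explicitly --- $\eps Z:DKZ\to Z$ is induced by the cocone $(Z,\eps^AZ,\eps^BZ)$ under the diagram \eqref{diag.col} attached to $KZ$, and $\eta\X=(R_Ad^A\X\circ\eta^AX,\,R_Bd^B\X\circ\eta^BY)$ --- and then verifies the two triangular identities by direct computation. You instead establish the natural bijection $\hom{\Tt}{D\X}{Z}\cong\hom{\EMM\TT}{\X}{KZ}$ by presenting each side as a pair of morphisms cut out by four conditions and matching them one-for-one under the adjunction transposes; your bookkeeping is accurate, including the mixed cases $(2)\Lra(\mathrm{iii})$ and $(3)\Lra(\mathrm{iv})$, where the transpose of $p\circ\eps^BL_AX$ collapses by a triangular identity to $R_Bp=R_B\eps^AZ\circ R_BL_A\bar p$, and the observation that a cocone under \eqref{diag.col} is determined by its two bottom legs $(p,q)$ because every top vertex maps into the bottom row. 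Each approach buys something. Yours makes all well-definedness automatic: the paper's assertion that $\eta\X$ is a morphism in $\EMM\TT$, which it leaves as an unchecked verification (``one can verify\dots''), is in your framework simply the instance $Z=D\X$, $(p,q)=(d^A\X,d^B\X)$ of the equivalence between conditions $(1)$--$(4)$ and $(\mathrm{i})$--$(\mathrm{iv})$. The paper's explicit unit and counit, on the other hand, are reused immediately afterwards: equations \eqref{coun.eq} drive the proof of Theorem~\ref{thm.Beck}, and the identification of $\eta^A\X$ with the canonical map into a coequaliser is the substance of Lemma~\ref{lem.f.f}. So if your proof were adopted, one should still extract the unit and counit from the bijection (transpose the identity morphisms of $D\X$ and of $KZ$), which recovers exactly the paper's formulas. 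Your closing remarks on naturality are also right: naturality in $Z$ is inherited from the transposes, and naturality in $\X$ follows from the defining identities \eqref{eq.D(f,g)}.
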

\begin{proof}
For any object $Z$ in $\Tt$ there is a cocone

{\small 
$$
~\hspace{-1cm} \xymatrix{L_AR_AL_A R_AZ\ar@<-.6ex>[dd]_{L_AR_A\eps^AZ} \ar@<.6ex>[dd]^{\eps^AL_AR_AZ} && L_BR_BL_A R_AZ\ar[lldd]_{\eps^BL_AR_AZ} \ar[rrrrdd]^{L_BR_B\eps^AZ} && L_AR_AL_BR_BZ \ar[rrdd]^{\eps^AL_BR_BZ} \ar[lllldd]_{L_AR_A\eps^BZ} && L_BR_BL_BR_BZ  \ar@<-.6ex>[dd]_{\eps^BL_BR_BZ} \ar@<.6ex>[dd]^{L_BR_B\eps^BZ}\\
&&&&&&\\
L_AR_AZ \ar[rrrd]^{\eps^AZ} &&&&&& L_BR_BZ  \ar[llld]_{\eps^BZ} \\
&&& Z.&&&}
$$}
This is a cocone under the diagram of type \eqref{diag.col} corresponding to the object $KZ$. By the universal property of colimits there is a unique morphism
$
\eps Z : DKZ\to Z,
$
such that
\begin{equation}\label{coun.eq}
\eps Z\circ d^AKZ = \eps^A Z, \qquad \eps Z\circ d^BKZ = \eps^B Z.
\end{equation}
This construction defines a natural transformation $\eps$ from the functor $DK$ to the identity functor on $\Tt$.  

For all objects $\X$ in $\EMM\TT$, define  $\eta^A\X: X\to R_AD\X$ and $\eta^B\X: Y\to R_BD\X$ as composites
$$
\eta^A\X = R_Ad^A\X\circ \eta^AX, \qquad \eta^B\X = R_Bd^B\X\circ \eta^BY.
$$
Using the naturality of  $\eps^A$, $\eps^B$, $\eta^A$ and $\eta^B$, the triangular identities  for units and counits of adjunctions, and the definition of $(D\X, d^A\X, d^B\X)$ as a cocone under the diagram \eqref{diag.col}, one can verify that the pair $\eta\X := (\eta^A\X, \eta^B\X)$ is a morphism in $\EMM\TT$. The assignment $\X \mapsto \eta\X$ defines a natural transformation $\eta$ from the identity functor on $\EMM\TT$ to $KD$. We now prove the triangular identities for $\eps$ and $\eta$.

Take any object $Z$ in $\Tt$ and compute
$$
R_A\eps Z\circ \eta^A KZ = R_A\eps Z\circ R_Ad^AKZ\circ\eta^A R_AZ = R_A\eps^AZ\circ \eta^A R_AZ =  R_AZ,
$$
where the second equality follows by \eqref{coun.eq} and the third by the triangular identities for $\eps^A$ and $\eta^A$. Similarly, $R_B\eps Z\circ \eta^B KZ = R_B Z$. This proves that the composite $K\eps\circ \eta K$ is the identity natural transformation on $K$. 

Next take any object $\X$ in $\EMM\TT$. Since $D\eta\X = D(\eta^A\X, \eta^B\X)$, $D\eta\X$ satisfies equalities \eqref{eq.D(f,g)}. In particular
$$
D\eta\X\circ d^A\X = d^AKD\X\circ L_AR_Ad^A\X\circ L_A\eta^AX.
$$
Therefore,
\begin{eqnarray*}
\eps D\X\circ D\eta\X\circ d^A\X &=& \eps D\X\circ d^AKD\X\circ L_AR_Ad^A\X\circ L_A\eta^AX\\
&=& \eps^AD\X \circ  L_AR_Ad^A\X\circ L_A\eta^AX \\
&=& d^A\X\circ \eps^AL_AX\circ L_A\eta^A X = d^A\X,
\end{eqnarray*}
where the second equality follows by \eqref{coun.eq}, the third one by the naturality of $\eps^A$, and the final one is one of the triangular identities for $\eps^A$ and $\eta^A$. Similarly,
$$
\eps D\X\circ D\eta\X\circ d^B\X  = d^B\X.
$$
The universality of colimits now yields $\eps D\X\circ D\eta\X = D\X$, i.e.\ the second triangular identity for $\eps$ and $\eta$.
\end{proof}

\begin{definition}\label{def.convert}
The pair $(R_A,R_B)$ is said to {\em convert colimits into coequalisers} if, for all objects $\X=((X,\rho^X),(Y,\rho^Y),v,w)\in\EMM{\TT}$, the diagrams
$$
\xymatrix{R_AL_AR_AL_A X\ar@<-.6ex>[rr]_-{R_AL_A\rho^X} \ar@<.6ex>[rr]^-{R_A\eps^AL_AX} && R_AL_A X \ar[rr]^-{R_Ad^A\X} && R_AD\X 
}
$$
and
$$
\xymatrix{R_BL_BR_BL_B Y\ar@<-.6ex>[rr]_-{R_BL_B\rho^Y} \ar@<.6ex>[rr]^-{R_B\eps^BL_BY} && R_BL_B Y \ar[rr]^-{R_Bd^B\X} && R_BD\X 
}
$$
are coequalisers in $\Xx$ and $\Yy$ respectively.
\end{definition}
\begin{lemma}\label{lem.f.f}
The functor $D$ is fully faithful if and only if the pair $(R_A,R_B)$ converts colimits into coequalisers.
\end{lemma}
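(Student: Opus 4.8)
The plan is to exploit that $D$ is a \emph{left} adjoint (Proposition~\ref{prop.adjoint.comp}) and to invoke the standard criterion that a left adjoint is fully faithful precisely when the unit of the adjunction is a natural isomorphism. Writing $\eta$ for the unit of $(D,K)$ constructed in the proof of Proposition~\ref{prop.adjoint.comp}, the statement therefore reduces to showing that $\eta$ is a natural isomorphism if and only if $(R_A,R_B)$ converts colimits into coequalisers. Since a natural transformation is an isomorphism exactly when each component is, and since by Lemma~\ref{lem.iso} a morphism $\eta\X = (\eta^A\X,\eta^B\X)$ in $\EMM\TT$ is an isomorphism if and only if $\eta^A\X$ is an isomorphism in $\Xx$ and $\eta^B\X$ is one in $\Yy$, it suffices to analyse the families $\eta^A\X = R_Ad^A\X\circ\eta^AX$ and $\eta^B\X = R_Bd^B\X\circ\eta^BY$ separately. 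By symmetry I treat only the $\A$-side.

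The heart of the argument is a comparison of two forks on the parallel pair $(\m^AX,\, A\rho^X):AAX \rightrightarrows AX$, where $A = R_AL_A$ and $\m^A = R_A\eps^AL_A$. First, I would recall the canonical presentation of an algebra: the structure map $\rho^X:AX\to X$ is a \emph{split} coequaliser of this pair, with splitting data $s = \eta^AX:X\to AX$ and $t = \eta^A AX:AX\to AAX$. The four split-coequaliser conditions follow from the algebra axioms $\rho^X\circ\eta^AX = X$ and $\rho^X\circ\m^AX = \rho^X\circ A\rho^X$, the monad unit law $\m^AX\circ\eta^A AX = AX$, and the naturality identity $A\rho^X\circ\eta^A AX = \eta^AX\circ\rho^X$; being split, this coequaliser is absolute. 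Second, applying $R_A$ to the equality $d^A\X\circ\eps^AL_AX = d^A\X\circ L_A\rho^X$ (which holds because $d^A\X$ is a colimit cocone for \eqref{diag.col}) shows that $R_Ad^A\X:AX\to R_AD\X$ is also a fork on $(\m^AX, A\rho^X)$.

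The key step is to identify $\eta^A\X$ as the comparison morphism between these two forks. Because $\rho^X$ is a split coequaliser with section $s = \eta^AX$, the unique factorisation of an arbitrary fork $k$ through $\rho^X$ is given explicitly by $k\circ s$; taking $k = R_Ad^A\X$ produces exactly $\eta^A\X = R_Ad^A\X\circ\eta^AX$, and in particular $\eta^A\X\circ\rho^X = R_Ad^A\X$. It then remains to apply the elementary fact that, given a coequaliser $e$ of a parallel pair together with a second fork $e'$ on the same pair and the comparison $h$ determined by $h\circ e = e'$, the morphism $h$ is an isomorphism if and only if $e'$ is itself a coequaliser of that pair. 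With $e = \rho^X$, $e' = R_Ad^A\X$ and $h = \eta^A\X$, this says precisely that $\eta^A\X$ is an isomorphism if and only if the first diagram in Definition~\ref{def.convert} is a coequaliser in $\Xx$; the symmetric statement holds for $\eta^B\X$ in $\Yy$. Assembling these equivalences through Lemma~\ref{lem.iso} and the unit criterion yields the claim.

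I expect the main obstacle to be the precise bookkeeping in the key step: correctly orienting the parallel pair and assigning the roles of the section $s$ and the second splitting map $t$, and then confirming that the induced comparison is literally $\eta^A\X$ rather than some composite differing by an inserted unit or counit. Once $\eta^A\X\circ\rho^X = R_Ad^A\X$ is in hand, the surrounding equivalences are entirely formal.
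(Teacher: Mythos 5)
Your proposal is correct and follows essentially the same route as the paper: both rest on the canonical split-coequaliser presentation of an algebra $(X,\rho^X)$ (the paper cites \cite[Proposition~4, Section~3.3]{BarWel:ttt} for this), the identification of the induced comparison morphism with the unit component $\eta^A\X = R_Ad^A\X\circ\eta^AX$, the uniqueness of coequalisers for both directions, and Lemma~\ref{lem.iso} together with the unit criterion for full faithfulness of a left adjoint. Your write-up merely makes explicit the factorisation $\eta^A\X\circ\rho^X = R_Ad^A\X$ via the splitting $s=\eta^AX$, a step the paper compresses into ``by its uniqueness, must coincide with $\eta^A\X$.''
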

\begin{proof}
For all objects $\X=((X,\rho^X),(Y,\rho^Y),v,w)\in\EMM{\TT}$, consider  the following diagram
$$
\xymatrix{R_AL_AR_AL_A X\ar@<-.6ex>[rr]_-{R_AL_A\rho^X} \ar@<.6ex>[rr]^-{R_A\eps^AL_AX} && R_AL_A X \ar[rd]_-{R_Ad^A\X} \ar[rr]^-{\rho^X} && X  \\
&&& R_AD\X .&
}
$$
Since the multiplication in  monad $R_AL_A$ is given by $R_A\eps^AL_A$, the top row is a coequaliser; see \cite[Proposition~4, Section~3.3]{BarWel:ttt}. Thus there exists a morphism $X\to R_AD\X$, which, by its uniqueness, must coincide with $\eta^A\X$. By considering a similar diagram for the other adjoint pair, one fits into it the other component of the unit of adjunction, $\eta^B\X$. If $(R_A,R_B)$ converts colimits into coequalisers, then (by the uniqueness of coequalisers) both $\eta^A\X$ and $\eta^B\X$ are isomorphisms. Hence, by Lemma~\ref{lem.iso}, $\eta$ is an isomorphism, i.e.\ $D$ is fully faithful. Conversely, if $\eta$ is an isomorphism, then both $(R_AD\X, R_Ad^A\X)$ and $(R_BD\X, R_Bd^B\X)$ are (isomorphic to) coequalisers, i.e.\ the pair $(R_A,R_B)$ converts colimits into coequalisers.
\end{proof}

The main result of this section is contained in the following precise moritability theorem (Beck's theorem for double adjunctions). For this theorem the existence of colimits of diagrams \eqref{diag.col} needs not to be assumed {\em a priori}. 
\begin{theorem}\label{thm.Beck}
Let $\tt=(\Tt,(L_A,R_A),(L_B,R_B))$ be a double adjunction.  Then the pair $(R_A,R_B)$ is moritable if and only if $\Tt$ has colimits of all the diagrams \eqref{diag.col} and the pair $(R_A,R_B)$ reflects isomorphisms and converts colimits into coequalisers.
\end{theorem}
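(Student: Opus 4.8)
The plan is to treat the two implications separately and, in both, to read moritability off the adjunction $D\dashv K$ supplied by Proposition~\ref{prop.adjoint.comp}. Recall that $K$ is an equivalence precisely when both the unit $\eta\colon\id\to KD$ and the counit $\eps\colon DK\to\id$ of this adjunction are natural isomorphisms. The three stated conditions are exactly what is needed to force this: existence of the colimits \eqref{diag.col} makes $D$ (hence the adjunction) available, conversion of colimits into coequalisers controls $\eta$ through Lemma~\ref{lem.f.f}, and reflection of isomorphisms controls $\eps$.

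For the \emph{if} direction I would assume all three conditions. Proposition~\ref{prop.adjoint.comp} then gives $D\dashv K$ with unit $\eta$ and counit $\eps$, and by Definition~\ref{def.convert} together with Lemma~\ref{lem.f.f} the functor $D$ is fully faithful, equivalently $\eta$ is a natural isomorphism. It remains to show that $\eps$ is a natural isomorphism. Fix $Z\in\Tt$ and apply the forgetful functors $U_A,U_B$ (so that $U_AK=R_A$ and $U_BK=R_B$) to the triangular identity $K\eps\circ\eta K=\id_K$ evaluated at $Z$; this yields the identities $R_A\eps Z\circ U_A\eta KZ = R_AZ$ and $R_B\eps Z\circ U_B\eta KZ = R_BZ$. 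Since $\eta$ is an isomorphism and forgetful functors preserve isomorphisms, $U_A\eta KZ$ and $U_B\eta KZ$ are isomorphisms, whence $R_A\eps Z$ and $R_B\eps Z$ are isomorphisms. As $(R_A,R_B)$ reflects isomorphisms, $\eps Z$ is an isomorphism. Thus $\eps$ is a natural isomorphism and $K$ is an equivalence, i.e.\ $(R_A,R_B)$ is moritable.

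For the \emph{only if} direction I would assume $K$ is an equivalence. Reflection of isomorphisms is then Proposition~\ref{prop.ref}. To produce the colimits \eqref{diag.col}, fix $\X\in\EMM\TT$; by essential surjectivity write $\X\cong KZ$, and since colimits are invariant under isomorphism of diagrams it suffices to realise the colimit of \eqref{diag.col} for $KZ$. I claim it is $Z$ equipped with the cocone $\eps^AZ\colon L_AR_AZ\to Z$, $\eps^BZ\colon L_BR_BZ\to Z$ already exhibited in the proof of Proposition~\ref{prop.adjoint.comp}. Given a competing cocone with legs $p\colon L_AR_AZ\to W$ and $q\colon L_BR_BZ\to W$, transpose them across $(L_A,R_A)$ and $(L_B,R_B)$ to $\tilde p\colon R_AZ\to R_AW$ and $\tilde q\colon R_BZ\to R_BW$. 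A routine unit--counit computation shows that the two parallel cocone relations over \eqref{diag.col} say exactly that $\tilde p,\tilde q$ are $\A$- and $\B$-algebra morphisms (the conditions \eqref{fglin}), while the two mixed relations say exactly that the pair satisfies \eqref{fgcomp}; hence $(\tilde p,\tilde q)\colon KZ\to KW$ is a morphism in $\EMM\TT$. Full faithfulness of $K$ then produces a unique $h\colon Z\to W$ with $Kh=(\tilde p,\tilde q)$, and this $h$ is the required mediating morphism, so $Z$ is the colimit. Consequently the colimits \eqref{diag.col} exist, and Proposition~\ref{prop.adjoint.comp} now realises $D$ as the left adjoint of the equivalence $K$; a left adjoint of an equivalence is a quasi-inverse, in particular fully faithful, so Lemma~\ref{lem.f.f} returns the conversion condition of Definition~\ref{def.convert}.

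The triangular-identity bookkeeping and the adjunction transpositions are mechanical. The one genuinely delicate step, and the main obstacle, is the existence half of the \emph{only if} direction: converting the abstract equivalence $K$ into an honest colimit of the concrete six-term diagram \eqref{diag.col}. Everything hinges on the precise matching between the four cocone relations over \eqref{diag.col} and the defining conditions \eqref{fglin}, \eqref{fgcomp} of a morphism in $\EMM\TT$; this is exactly the point at which the shape chosen for \eqref{diag.col} must fit the structure of the Eilenberg--Moore category of a Morita context, and it is what makes the whole statement work.
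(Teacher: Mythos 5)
Your proof is correct, and it shares the paper's overall skeleton --- the adjunction $D\dashv K$ of Proposition~\ref{prop.adjoint.comp}, Lemma~\ref{lem.f.f} to control the unit, Proposition~\ref{prop.ref} for reflection of isomorphisms, and the identification of $(Z,\eps^AZ,\eps^BZ)$ as the colimit of \eqref{diag.col} at $KZ$ --- but you complete two sub-steps by genuinely different, and in both cases shorter, arguments. First, for the counit in the \emph{if} direction the paper uses the conversion hypothesis a second time: it applies $R_A$ to the cocone defining $\eps Z$, exhibits $R_A\eps Z$ as the comparison morphism from a contractible coequaliser to the coequaliser supplied by conversion, and only then invokes reflection. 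You instead observe that once $\eta$ is invertible the triangular identity $K\eps\circ\eta K=\id_K$ forces $K\eps Z$ to be an isomorphism, hence so are $R_A\eps Z=U_AK\eps Z$ and $R_B\eps Z=U_BK\eps Z$; this is cleaner and makes visible that the conversion hypothesis enters only through the unit. Second, for the existence of the colimits in the \emph{only if} direction the paper maps diagram \eqref{diag.col} into $\EMM\TT$, checks that $(KZ,K\eps^AZ,K\eps^BZ)$ is a colimit there (by the same transposition of cocone legs that you perform), and then appeals to the fact that an equivalence reflects colimits; you verify the universal property directly in $\Tt$, transposing an arbitrary cocone to a morphism $KZ\to KW$ in $\EMM\TT$ and using full faithfulness of $K$ to produce, and uniquify, the mediating morphism. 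The computational core --- the exact match between the four cocone relations of \eqref{diag.col} and conditions \eqref{fglin}, \eqref{fgcomp} --- is the same in both, and you correctly single it out as the crux; what your route buys is independence from the reflection-of-colimits property of equivalences, plus a more careful treatment of essential surjectivity (you work with $\X\cong KZ$ and invariance of colimits under isomorphism of diagrams, where the paper silently writes $\X=KZ$).
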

\begin{proof}
Assume that $K$ is an equivalence of categories. Consider a diagram of the form \eqref{diag.col} in $\Tt$. We can choose $\X=KZ$ for some $Z\in\Tt$ and we know that $(Z,\varepsilon^A,\varepsilon^B)$ is a cocone for this diagram.
Now apply the functor $K$ to diagram \eqref{diag.col}, then we claim that $(KZ,K\varepsilon^A,K\varepsilon^B)$ is a colimit for the new diagram in $\EMM \TT$. To check this, consider any cocone $(\H,(f^A,g^A),(f^B,g^B))$ on the diagram. Define $(h,k):KZ\to \H$ by putting $h=f^A\circ \eta^AR_AZ$ and $k=g^B\circ \eta^BR_BZ$. One can verify that $(h,k)$ is indeed a morphism in $\EMM \TT$ (apply the fact that $\H$ is a cocone and that $(f^A,g^A)$ and $(f^B,g^B)$ are morphisms in $\EMM\TT$, together with the adjunction properties of $(L_A,R_A)$ and $(L_B,R_B)$). Since $K$ is an equivalence of categories, it reflects colimits and therefore  $(Z,\varepsilon^A,\varepsilon^B)$ is the colimit of the original diagram \eqref{diag.col} in $\Tt$. Furthermore, $(R_A,R_B)$ reflects isomorphisms by Proposition~\ref{prop.ref} and it converts colimits into coequalisers by Lemma~\ref{lem.f.f}. 

In the converse direction, the counit $\eta$ is an isomorphism by Lemma~\ref{lem.f.f}. Applying $R_A$ to the first column of the cocone defining $\eps Z$ we obtain the following diagram
$$
\xymatrix{R_AL_AR_AL_A R_AZ\ar@<-.6ex>[d]_{R_AL_AR_A\eps^AZ} \ar@<.6ex>[d]^{R_A\eps^AL_AR_AZ} && R_AL_AR_AL_A R_AZ\ar@<-.6ex>[d]_{R_AL_AR_A\eps^AZ} \ar@<.6ex>[d]^{R_A\eps^AL_AR_AZ}\\
R_AL_AR_AZ \ar[d]_{R_A\eps^AZ} &&R_AL_AR_AZ \ar[d]^{R_Ad^AKZ} \\
R_AZ&&\ar[ll]_-{R_A\eps Z}R_ADKZ .}
$$
The first column is a (contractible) coequaliser, the second is a coequaliser by the assumption that $(R_A,R_B)$ converts colimits into coequalisers. Thus $R_A\eps Z$ is an isomorphism. Similarly, $R_B\eps Z$ is an isomorphism. Since the pair $(R_A,R_B)$ reflects isomorphisms, also $\eps Z$ is an isomorphism in $\Tt$.
\end{proof}

\section{Morita theory}\label{sec.Moritath}

In this section we study when, given a Morita context $\TT=(\A,\B,\T,\bhT,\ev,\hev)$ on $\Xx$ and $\Yy$, the bialgebra functors $\T$ and $\bhT$ induce an equivalence of categories between the categories $\EM\Xx\A$ and $\EM\Yy\B$. In particular, we prove that if $\Xx$ and $\Yy$ have coequalisers and $U_A$ and $U_B$ preserve coequalisers, then \emph{any} equivalence between two categories $\EM\Xx\A$ and $\EM\Yy\B$ of algebras of monads $\A$ and $\B$ is induced by a Morita context in $\Mor(\Xx,\Yy)$.

\subsection{Preservation of coequalisers by algebras}
Consider a monad $\A=(A,\m^A,\biota^A)$ on a category $\Xx$ and a functor $S:\Xx\to \Yy$. A pair $(S,\sigma)$ is called a {\em right $\A$-algebra functor} if $(S,S,\sigma)$ is a $\bf{\Yy}\hbox{-}\A$ bialgebra functor, where $\bf{\Yy}$ is the trivial monad on $\Yy$. Thus $(S,\sigma)$ is a right $\A$-algebra functor if and only if $\sigma:SA\to S$ is a natural transformation such that $\sigma\circ S\m^A = \sigma\circ \sigma A$ and $S=\sigma\circ S\biota^A$. Similarly, one introduces the notion of a left $\A$-algebra functor. Note that if $\T=(T,\lambda,\rho)$ is an $\A$-$\B$ bialgebra functor, then $(T,\lambda)$ is a left $\A$-algebra functor and $(T,\rho)$ is a right $\B$-algebra functor.

\begin{lemma}\label{lem.SA}
Let $\A=(A,\m^A,\biota^A)$ be a monad on $\Xx$ and $\S=(S,\sigma)$ a right $\A$-algebra functor. Any coequaliser  preserved by $SA$ is also preserved by $S$. If $(S',\sigma')$ is a left $\A$-algebra functor, then any coequaliser preserved by $AS'$ is also preserved by $S'$.
\end{lemma}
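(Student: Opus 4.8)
The plan is to exhibit $S$ as a \emph{retract} of the functor $SA$ and then to run the universal property of the given coequaliser through this retraction. The only structural input needed is the unit axiom of a right $\A$-algebra functor, $\sigma\circ S\biota^A = S$, which states exactly that the natural transformations
$$
S\biota^A\colon S\to SA, \qquad \sigma\colon SA\to S
$$
compose (in this order) to the identity natural transformation $S$. Thus $(S\biota^A,\sigma)$ presents $S$ as a retract of $SA$ in the category of functors $\Xx\to\Yy$, and this single identity will drive the whole argument.

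First I would fix a coequaliser of a parallel pair $f,g\colon X\to Y$ with coequalising morphism $q\colon Y\to Q$ in $\Xx$, and assume it is preserved by $SA$, so that $SAX\rightrightarrows SAY\xrightarrow{SAq}SAQ$ is a coequaliser in $\Yy$. Applying $S$ produces a fork, since $Sq\circ Sf=S(q\circ f)=S(q\circ g)=Sq\circ Sg$, so it only remains to verify the universal property of $Sq$ in $\Yy$. For the existence of factorisations, given $h\colon SY\to Z$ with $h\circ Sf=h\circ Sg$, I would transport $h$ to $SA$ by forming $h\circ\sigma Y\colon SAY\to Z$; naturality of $\sigma$ rewrites $h\circ\sigma Y\circ SAf$ as $h\circ Sf\circ\sigma X$ (and likewise with $g$), so $h\circ\sigma Y$ coequalises $SAf,SAg$. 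The universal property of the $SA$-coequaliser then supplies a unique $k\colon SAQ\to Z$ with $k\circ SAq=h\circ\sigma Y$, and I would define $\bar h:=k\circ S\biota^A Q$. Naturality of $S\biota^A$ applied to $q$, followed by the retract identity $\sigma\circ S\biota^A=S$, yields $\bar h\circ Sq=h$, as required.

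The step that demands the most care is \emph{uniqueness}, because at this stage $Sq$ is not yet known to be an epimorphism, so a factorisation cannot simply be cancelled. Here I would again exploit the retraction: if $\bar h'\circ Sq=h$, then naturality of $\sigma$ shows $(\bar h'\circ\sigma Q)\circ SAq=h\circ\sigma Y$, whence $\bar h'\circ\sigma Q=k$ by the uniqueness clause already used for the $SA$-coequaliser; composing on the right with $S\biota^A Q$ and invoking $\sigma\circ S\biota^A=S$ gives $\bar h'=k\circ S\biota^A Q=\bar h$. This completes the verification that $S$ preserves the coequaliser.

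Finally, the assertion for a left $\A$-algebra functor $(S',\sigma')$ is entirely symmetric and needs no new idea: the unit axiom now reads $\sigma'\circ\biota^A S'=S'$, presenting $S'$ as a retract of $AS'$ via $\biota^A S'\colon S'\to AS'$ and $\sigma'\colon AS'\to S'$, so the argument above applies verbatim with $AS'$ in place of $SA$. I expect no obstacle beyond the uniqueness bookkeeping noted above; the essential content is simply the general principle that a retract of a functor preserving a given colimit preserves that colimit too.
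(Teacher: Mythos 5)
Your proof is correct and takes essentially the same route as the paper's: both use the unit axiom $\sigma\circ S\biota^A=S$ to transport the given cocone along $\sigma Y$, factor it uniquely through the preserved $SA$-coequaliser, and compose with $S\biota^A$ at the coequaliser object to produce the factorisation through $Sq$. Your explicit retraction argument for uniqueness merely spells out a step the paper asserts without detail (alternatively, $Sq$ is epic because $Sq\circ\sigma Y=\sigma Q\circ SAq$ is a composite of the epimorphism $SAq$ with the split epimorphism $\sigma Q$), so this is a refinement rather than a different approach.
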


\begin{proof}
Consider a coequaliser 
$$
\xymatrix{
X \ar@<.5ex>[rr]^f \ar@<-.5ex>[rr]_g && Y \ar[rr]^z && Z
}
$$
in $\Xx$ and assume that it is preserved by $SA$. 
Applying the functors $S$ and $SA$ to this coequaliser, one obtains the following diagram in $\Yy$
\[
\xymatrix{
SX \ar@<-.5ex>[d]_{S \biota^A X} \ar@<.5ex>[rr]^-{S f} \ar@<-.5ex>[rr]_-{S g} && SY \ar@<-.5ex>[d]_{S\biota^A Y} \ar[rr]^-{S z} && SZ \ar@<-.5ex>[d]_{S \biota^A Z}  \\
SA X \ar@<-.5ex>[u]_{\sigma X} \ar@<.5ex>[rr]^-{SA f} \ar@<-.5ex>[rr]_-{SA g} && SA Y \ar@<-.5ex>[u]_{\sigma Y} \ar[rr]^-{SA z} && SAZ\ . \ar@<-.5ex>[u]_{\sigma Z}
}
\]
By assumption the lower row is a coequaliser.
Suppose that there exists a pair $(h,H)$, where $H$ is an object in $\Yy$ and $h:SY\to H$ is a morphism in $\Yy$ such that $h\circ Sf=h\circ Sg$. 
Since $\sigma$ is a natural transformation,  $h\circ \sigma Y\circ SAf = h\circ \sigma Y\circ SAg$. By the univeral property of the coequaliser $(SAZ, SAz)$, there is a unique morphism $k':SAZ\to H$ such that $k'\circ SAz= h\circ \sigma Y$. This, together with the naturality of $\biota^A$ and the unitality  of the right $\A$-algebra $\S$ imply  that $k'\circ S\biota^AZ\circ Sz= h$. Then $k=k'\circ S\biota^AZ: SZ\to H$ is a unique morphsim such that $h=k\circ Sz$.

The second statement is verified by a similar computation. 
\end{proof}

\begin{lemma}\label{lem.coeqA^A}
Let $f,g:\X\to \Y$ be morphisms in $\Xx^\A$. Suppose that the coequaliser $(E,e)$ of $(U_A(f),U_A(g))$ exists in $\Xx$. If $AA$ preserves this coequaliser, then the coequaliser $(\E,\epsilon)$ of the pair $(f,g)$ exists in $\Xx^A$ and $U_A(\E,\epsilon)=(E,e)$.
\end{lemma}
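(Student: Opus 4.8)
The plan is to lift $(E,e)$ to $\Xx^\A$ by equipping $E$ with a canonical $\A$-algebra structure and then checking that $e$ becomes the coequaliser there. The key observation, which explains why it suffices to assume that $AA$ (and not separately $A$) preserves $(E,e)$, is that $(A,\m^A)$ is itself a right $\A$-algebra functor: the multiplication $\m^A\colon AA\to A$ satisfies $\m^A\circ A\m^A=\m^A\circ\m^A A$ and $A=\m^A\circ A\biota^A$ by the monad axioms. Applying Lemma~\ref{lem.SA} with $S=A$ (so that $SA=AA$), the hypothesis that $AA$ preserves $(E,e)$ yields that $A$ preserves $(E,e)$ as well. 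Consequently both $Ae$ and $AAe$ are coequalisers (of $(Af,Ag)$ and of $(AAf,AAg)$ respectively), and in particular both are epimorphisms.

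First I would construct the structure map. Since $f$ and $g$ are $\A$-algebra morphisms and $e\circ f=e\circ g$, one computes $e\circ\rho^Y\circ Af=e\circ f\circ\rho^X=e\circ g\circ\rho^X=e\circ\rho^Y\circ Ag$, so that $e\circ\rho^Y\colon AY\to E$ coequalises the pair $(Af,Ag)$. As $Ae$ is the coequaliser of this pair, there is a unique $\rho^E\colon AE\to E$ with $\rho^E\circ Ae=e\circ\rho^Y$; this identity says precisely that $e$ will be an $\A$-algebra morphism.

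Next I would verify that $(E,\rho^E)$ is an $\A$-algebra, checking each axiom by precomposing with a suitable epimorphism and reducing to the corresponding axiom for $(Y,\rho^Y)$. For associativity, precompose $\rho^E\circ\m^A E$ and $\rho^E\circ A\rho^E$ with the epimorphism $AAe$: using the naturality of $\m^A$ at $e$, the defining identity $\rho^E\circ Ae=e\circ\rho^Y$, and functoriality of $A$, both sides reduce respectively to $e\circ\rho^Y\circ\m^A Y$ and $e\circ\rho^Y\circ A\rho^Y$, which agree by the associativity axiom of $(Y,\rho^Y)$. For unitality, precompose $\rho^E\circ\biota^A E$ with the epimorphism $e$ and use the naturality of $\biota^A$ together with the unit axiom of $(Y,\rho^Y)$. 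This is exactly where the two preservation consequences are used: $AAe$ epi for associativity, and the existence of $\rho^E$ (hence $Ae$ a coequaliser) for the construction itself.

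Finally I would establish the universal property in $\Xx^\A$. Given an $\A$-algebra $(W,\rho^W)$ and an algebra morphism $h\colon(Y,\rho^Y)\to(W,\rho^W)$ with $h\circ f=h\circ g$, the universal property of $(E,e)$ in $\Xx$ produces a unique $\bar h\colon E\to W$ with $\bar h\circ e=h$; precomposing $\bar h\circ\rho^E$ and $\rho^W\circ A\bar h$ with the epimorphism $Ae$ shows that $\bar h$ is $\A$-linear, and uniqueness in $\Xx^\A$ follows from uniqueness in $\Xx$ since $U_A$ is faithful. Hence $(E,\rho^E)$ together with $e$ is the coequaliser of $(f,g)$ in $\Xx^\A$ and $U_A(E,\rho^E)=(E,e)$. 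The only genuinely non-routine step is the reduction of the hypotheses via Lemma~\ref{lem.SA}; once both $A$ and $AA$ are known to preserve $(E,e)$, the remainder is a standard diagram chase exploiting that coequalisers are epimorphisms.
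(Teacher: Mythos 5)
Your proof is correct and takes essentially the same route as the paper: both apply Lemma~\ref{lem.SA} with $S=A$, $\sigma=\m^A$ (so $SA=AA$) to deduce that $A$ preserves $(E,e)$, then lift the structure map $\rho^E$ through the coequaliser $Ae$ and verify associativity against the epimorphism $AAe$. The only difference is one of detail: you write out the unitality check and the universal property in $\EM\Xx\A$ explicitly, which the paper's terse proof leaves to the reader.
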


\begin{proof}
By Lemma~\ref{lem.SA}, $A$ preserves the coequaliser $(E,e)$. The universal property of  coequalisers implies the existence of a unique map $\rho^E:AE\to E$ such that $\rho^E\circ Ae=e\circ \rho^Y$ and $\rho^E\circ \biota^AE=E$. Using the fact that the coequaliser $(E,e)$ is preserved by $AA$, one  checks that $\rho^E$ defines a (associative) left action of $A$ on $E$, i.e.\ $(E,\rho^E)$ is an object of $\EM\Xx\A$.
\end{proof}

The following standard lemma relates various preservation properties for coequalisers. We include a brief proof for completeness.

\begin{lemma}\label{coequalizersEM}
If $\Xx$ has (all) coequalisers, then the following statements are equivalent:
\begin{enumerate}[(i)]
\item $A:\Xx\to\Xx$ preserves coequalisers;
\item $AA:\Xx\to\Xx$ preserves coequalisers;
\item $\EM\Xx\A$ has (all) coequalisers and they are preserved by $U_A:\EM\Xx\A\to \Xx$;
\item $U_A:\EM\Xx\A\to \Xx$ preserves coequalisers.
\end{enumerate}
\end{lemma}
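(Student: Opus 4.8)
The plan is to prove the four statements equivalent by establishing the cycle of implications (i) $\Rightarrow$ (ii) $\Rightarrow$ (iii) $\Rightarrow$ (iv) $\Rightarrow$ (i). Since $\Xx$ is assumed to possess all coequalisers, each arrow can be justified either by an elementary categorical observation or by a result proved earlier in this section.

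Two of the arrows are immediate. For (i) $\Rightarrow$ (ii), note that $AA$ is the composite $A\circ A$, and a composite of coequaliser-preserving functors preserves coequalisers. For (iii) $\Rightarrow$ (iv) there is nothing to do, since (iii) asserts in particular that $U_A$ preserves coequalisers. For (ii) $\Rightarrow$ (iii), I would take an arbitrary parallel pair $f,g:\X\to\Y$ in $\EM\Xx\A$; because $\Xx$ has all coequalisers, the coequaliser $(E,e)$ of $(U_Af,U_Ag)$ exists in $\Xx$ and, by (ii), is preserved by $AA$. Lemma~\ref{lem.coeqA^A} then produces a coequaliser of $(f,g)$ in $\EM\Xx\A$ whose image under $U_A$ is $(E,e)$. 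As the pair was arbitrary, this shows simultaneously that $\EM\Xx\A$ has all coequalisers and that $U_A$ preserves them.

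The only implication carrying a genuine idea is (iv) $\Rightarrow$ (i), and here I would exploit the factorisation $A=U_A\fr A$, where $\fr A:\Xx\to\EM\Xx\A$ is the free-algebra functor recalled in Section~\ref{sec.eil-moo} (indeed $U_A\fr AX=U_A(AX,\m^AX)=AX$ on objects, and $U_A\fr Af=Af$ on morphisms, so $U_A\fr A=A$ as functors). Since $\fr A$ is a left adjoint, it preserves all colimits, in particular coequalisers; composing with $U_A$, which preserves coequalisers by (iv), we conclude that $A=U_A\fr A$ preserves coequalisers. This closes the cycle.

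I do not expect a serious obstacle: the content of the lemma is entirely formal. The only step that is not a one-line observation, namely (ii) $\Rightarrow$ (iii), has already been isolated into Lemma~\ref{lem.coeqA^A}, and the hypothesis that $\Xx$ has all coequalisers enters precisely there, ensuring that the coequaliser to which that lemma is applied exists. The remaining point to record is merely that the free functor $\fr A$, being the left adjoint in the adjunction $(\fr A,U_A)$, preserves colimits---a standard fact used in the proof of (iv) $\Rightarrow$ (i).
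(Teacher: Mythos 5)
Your proof is correct and takes essentially the same route as the paper's: the two easy implications are dismissed the same way, (ii)~$\Rightarrow$~(iii) is delegated to Lemma~\ref{lem.coeqA^A} exactly as the paper does, and (iv)~$\Rightarrow$~(i) uses the same factorisation $A=U_A\fr A$ with the left adjoint $\fr A$ preserving coequalisers. The only difference is cosmetic: you spell out the verification that $U_A\fr A=A$ and the point at which the hypothesis on $\Xx$ enters, which the paper leaves implicit.
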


\begin{proof}
Implications $\ul{(i)\Rightarrow(ii)}$ and $\ul{(iii)\Rightarrow(iv)}$ are obvious. 

$\ul{(ii)\Rightarrow(iii)}$. Follows by Lemma~\ref{lem.coeqA^A}.

$\ul{(iv)\Rightarrow(i)}$. The free algebra functor $\fr A:\Xx\to\EM\Xx\A$ preserves coequalisers, since it is a left adjoint functor. By assumption  $U_A$ preserves coequalisers as well and $A=U_A\fr A$, so the statement $(i)$ follows.
\end{proof}

\subsection{Morita contexts and equivalences of categories of algebras}
Let $\B= (B,\m^B,\biota^B)$ be a monad on $\Yy$ and let $(T:\Yy\to\Xx,\rho)$ be a right $\B$-algebra functor.
For any $(Y,\rho^Y)$ in $\EM\Yy\B$, let $(T_BY,\tau Y)$ be the following coequaliser in $\Xx$ (if it exists)
\begin{equation}\label{eq.TBX}
\xymatrix{
TBY \ar@<.5ex>[rr]^-{\rho Y} \ar@<-.5ex>[rr]_-{T\rho^Y} && TY \ar[rr]^-{\tau Y} && T_BY.
}
\end{equation}
Similarly, given a monad $\A=(A,\m^A,\biota^A)$ on $\Xx$ and  a right $\A$-algebra functor $(\hT:\Xx\to\Yy,\hrho)$, consider an object $(X,\rho^X)$ in $\EM\Xx\A$,  and set $(\hT_AX,\htau X)$ to be the following coequaliser in $\Yy$ (if it exists)
\begin{equation}\label{eq.TAY}
\xymatrix{
\hT AX \ar@<.5ex>[rr]^-{\hrho X} \ar@<-.5ex>[rr]_-{\hT\rho^X} && \hT X \ar[rr]^-{\hat \tau X} && \hT_AX .
}
\end{equation}
Finally, recall that for any $\B$-algebra $(Y,\rho^Y)$, the following diagram is a contractible coequaliser in $\Yy$ and a (usual) coequaliser in $\EM\Yy\B$,
\begin{equation}\label{trivial}
\xymatrix{
BBY \ar@<.5ex>[rr]^-{\m^B Y} \ar@<-.5ex>[rr]_-{B\rho^Y} && BY \ar[rr]^-{\rho^Y} && Y.
}
\end{equation}
As in Section~\ref{sec.eil-moo}, the free--forgetful adjunctions for $\A$ and $\B$ are denoted by $(\fr A,U_A)$, $(\fr B, U_B)$, respectively. The counits are denoted by  $\feps^A$, $\feps^B$. 
 We are now ready to state the following lifting theorem.

\begin{proposition}\label{prop.TB}
Let $\A$ be a monad on $\Xx$ and $\B$ a monad on $\Yy$. There is a bijective correspondence between the following data:
\begin{enumerate}[(i)]
\item $\A$-$\B$ bialgebra functors $\T$, such that coequalisers of the form \eqref{eq.TBX} exist in $\Xx$ for any $Y\in\Yy$, and they are preserved by $AA$;

\item functors $T_B: \EM\Yy\B\to \EM\Xx\A$ such that $AAU_AT_B:\EM\Yy\B\to\Xx$ preserves coequalisers of the form \eqref{trivial} for all $(Y,\rho^Y)\in \EM\Yy\B$.
\end{enumerate}
Let $\T$ be a bialgebra functor as in $(i)$ and $T_B$ the corresponding functor of $(ii)$, then given a functor $P:\Xx\to\Ww$, the functor $P$ preserves coequalisers of the form \eqref{eq.TBX} if and only if the functor $PU_AT_B:\EM\Yy\B\to\Ww$ preserves coequalisers of the form \eqref{trivial}.
\end{proposition}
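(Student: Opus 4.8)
The plan is to prove that applying $U_A T_B$ to the canonical presentation \eqref{trivial} reproduces, arrow for arrow, the defining coequaliser \eqref{eq.TBX}. Once this identification is in place the stated equivalence is immediate: for any $P:\Xx\to\Ww$ the result of applying $PU_A T_B$ to \eqref{trivial} is then literally the result of applying $P$ to \eqref{eq.TBX}, so one is a coequaliser in $\Ww$ exactly when the other is. Since the identification will hold for every $(Y,\rho^Y)$, preservation of all coequalisers of the one form matches preservation of all coequalisers of the other form.

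First I would record how $T_B$ behaves on free $\B$-algebras. For $W\in\Yy$ the coequaliser \eqref{eq.TBX} defining $T_B\fr B W$ is the contractible coequaliser of $\rho BW$ and $T\m^B W$, split by $T\biota^B$; hence $U_A T_B\fr B=T$ and the family of coequalising maps assembles into a natural transformation $\tau:TU_B\Rightarrow U_A T_B$ whose restriction to free algebras is the right action, $\tau\fr B=\rho$. I would also note that each $\rho W$ is a split epimorphism, with section $T\biota^B W$ provided by the unit axiom $\rho\circ T\biota^B=T$ of the right $\B$-algebra functor $(T,\rho)$; this splitting is the device that later lets me cancel $\rho$ from both sides of an equation.

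Next I would regard \eqref{trivial} as the bar presentation $\fr B U_B\fr B U_B(Y,\rho^Y)\rightrightarrows\fr B U_B(Y,\rho^Y)\to(Y,\rho^Y)$ in $\EM\Yy\B$, with parallel arrows $\feps^B\fr B U_B$ and $\fr B U_B\feps^B$ and coequalising arrow $\feps^B$, and apply $U_A T_B$ to it. By the previous step the three objects become $TBY$, $TY$ and $T_B Y$. The arrow $\fr B U_B\feps^B$ has underlying map $B\rho^Y$, and $U_A T_B$ sends it to $T\rho^Y$ directly from $U_A T_B\fr B=T$. For the arrow $\feps^B\fr B U_B$ (underlying $\m^B Y$) and for the coequalising arrow $\feps^B$ (underlying $\rho^Y$) I would invoke naturality of $\tau$: naturality at $\feps^B_{\fr B Y}$ combined with the right-action associativity $\rho\circ T\m^B=\rho\circ\rho B$ identifies $U_A T_B(\feps^B\fr B U_B)$ with $\rho Y$ after cancelling the split epimorphism $\rho BY$, while naturality at $\feps^B_{(Y,\rho^Y)}=\rho^Y$, together with the coequaliser relation $\tau Y\circ\rho Y=\tau Y\circ T\rho^Y$, identifies $U_A T_B(\feps^B)$ with $\tau Y$ after cancelling the split epimorphism $\rho Y$. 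This shows that $U_A T_B$ carries \eqref{trivial} exactly onto \eqref{eq.TBX}.

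Finally I would conclude: for any $P:\Xx\to\Ww$, applying $PU_A T_B$ to \eqref{trivial} yields precisely the diagram obtained by applying $P$ to \eqref{eq.TBX}, whence $PU_A T_B$ preserves the coequaliser \eqref{trivial} if and only if $P$ preserves the coequaliser \eqref{eq.TBX}. The main obstacle is the middle step: because $U_A T_B$ is specified only through a universal property, its values on the structure maps of \eqref{trivial} are not given by closed formulas and must be forced out by chasing naturality of $\tau$ and then cancelling the split epimorphisms $\rho BY$ and $\rho Y$. Everything after those cancellations is routine bookkeeping.
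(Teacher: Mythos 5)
Your middle step is correct: choosing the coequalisers at free algebras to be the split coequalisers $(TW,\rho W)$ (the splitting you describe does work, with section $T\biota^BW$ of $\rho W$ and $TB\biota^BW$ above it), one gets $U_AT_B\fr B=T$ and $\tau\fr B=\rho$, and then naturality of $\tau$ together with cancellation of the split epimorphisms $\rho BY$ and $\rho Y$ shows that $U_AT_B$ sends the three arrows $\m^BY$, $B\rho^Y$, $\rho^Y$ of \eqref{trivial} to $\rho Y$, $T\rho^Y$, $\tau Y$, i.e.\ exactly to \eqref{eq.TBX}. This yields the final ``if and only if'' claim for arbitrary $P$, and it is arguably cleaner than the paper's treatment of that claim, which proves one direction by a $3\times 3$ diagram chase (rows: functors applied to the contractible coequaliser \eqref{trivial}; columns: $P$ applied to coequalisers of type \eqref{eq.TBX}) and the other direction inside its construction for $(ii)\Rightarrow(i)$, where \eqref{eq.TBXalt} is identified with \eqref{eq.TBX}. (Pedantically, with an arbitrary choice of coequalisers your identification holds only up to canonical natural isomorphism, but that still suffices for preservation statements.)

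There is, however, a genuine gap: the proposition's main assertion is the \emph{bijective correspondence} between the data in $(i)$ and in $(ii)$, and your proposal takes that correspondence as given rather than proving it. Three things are missing. First, in the direction $(i)\Rightarrow(ii)$ you never show that the coequaliser $T_BY$ carries an $\A$-algebra structure making $T_B$ a functor into $\EM\Xx\A$ rather than into $\Xx$; this is precisely where the hypothesis that $AA$ preserves \eqref{eq.TBX} enters, via Lemma~\ref{lem.SA} and Lemma~\ref{lem.coeqA^A}, whereas your argument only ever uses the composite $U_AT_B$. (Your identification with $P=AA$ does show that $AAU_AT_B$ preserves \eqref{trivial}, so the side condition in $(ii)$ would then follow.) Second, the direction $(ii)\Rightarrow(i)$ is entirely absent: starting from $T_B$ one must define $T=U_AT_B\fr B$, $\rho=U_AT_B\feps^B\fr B$, $\lambda=U_A\feps^AT_B\fr B$, verify the bialgebra axioms, and verify that the resulting coequalisers \eqref{eq.TBX} exist and are preserved by $AA$ (here Lemma~\ref{lem.SA} is used to pass from preservation by $AAU_AT_B$ to preservation by $U_AT_B$, so that \eqref{eq.TBXalt} is a coequaliser). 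Third, bijectivity requires checking that the two assignments are mutually inverse; your computations $U_AT_B\fr B=T$ and $\tau\fr B=\rho$ are the key steps for one composite, but the comparison of the $\A$-actions and the other composite are not addressed. As written, the proposal is a correct proof of the last sentence of the proposition only, conditional on a correspondence it does not establish.
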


\begin{proof}
$\ul{(i)\Rightarrow(ii)}$.
Given a $\B$-algebra $(Y,\rho^Y)$, define $T_BY$ by \eqref{eq.TBX}. Then it follows by Lemma~\ref{lem.coeqA^A} that there is a morphism $\rho^{T_BY}: AT_BY\to T_B Y$ such that $(T_BY, \rho^{T_BY})$ is an object in $\EM\Xx\A$.
For a morphism $f:Y\to Y'$ in $\EM\Yy\B$, the universality of the coequaliser induces  a morphism $T_Bf:T_BY\to T_BY'$ such that $T_Bf\circ \tau Y=\tau Y'\circ Tf$. To check that $T_Bf$ is a morphism in $\EM\Xx\A$, one can proceed as follows. By Lemma \ref{lem.SA},  $A$ preserves coequalisers of the form \eqref{eq.TBX}, so in particular, $A\tau Y$ is an epimorphism. Therefore, it is enough to verify that $T_Bf\circ\rho^{T_BY}\circ A\tau Y= \rho^{T_BY'}\circ AT_Bf\circ A\tau Y$, which follows from the defining properties of $\rho^{T_BY}$, $\rho^{T_BY'}$ and $T_Bf$, combined with the naturality of the right action $\rho$ of $T$. Thus there is a well-defined functor $T_B:\EM\Yy\B\to\EM\Xx\A$.
Consider now a functor $P:\Xx\to\Ww$ which preserves all the coequalisers of the form \eqref{eq.TBX}.
For any $(Y,\rho^Y)\in\EM\Yy\B$, construct the following diagram in $\Xx$
\[
\xymatrix{
PTBBBY \ar@<.5ex>[rr]^-{PTB\m^BY} \ar@<-.5ex>[rr]_-{PTBB\rho^Y} \ar@<-.5ex>[d]_-{PT\m^B BY} \ar@<.5ex>[d]^-{P\rho BBY} 
&& PTBBY \ar[rr]^-{PTB\rho^Y} \ar@<-.5ex>[d]_-{PT\m^BY} \ar@<.5ex>[d]^-{P\rho BY}
&& PTBY  \ar@<-.5ex>[d]_-{PT \rho^Y} \ar@<.5ex>[d]^-{P\rho Y} \\
PTBBY \ar@<.5ex>[rr]^-{PT\m^BY} \ar@<-.5ex>[rr]_-{PTB\rho^Y} \ar[d]_{P\tau{BBY} } 
&& PTBY \ar[rr]^-{PT\rho^Y} \ar[d]_{P\tau{BY}} && PTY \ar[d]^{P\tau Y} \\
PU_AT_B BBY \ar@<.5ex>[rr]^-{PT_B\m^BY} \ar@<-.5ex>[rr]_-{PT_BB\rho^Y} 
&& PU_AT_BBY \ar[rr]^-{PT_B\rho^Y} && PU_AT_BY \, .
}
\]
The first and second rows of this diagram are coequalisers, as they are obtained by applying functors to the {\em contractible} coequaliser \eqref{trivial}. All three columns are coequalisers, since they are coequalisers of type \eqref{eq.TBX} to which the functor $P$ is applied. The diagram chasing arguments then yield that the lower row is a coequaliser too. The first part of the proof is then completed by setting $P=AA$.

 $\ul{(ii)\Rightarrow(i)}$. 
Define $T=U_AT_B\fr B$, and natural transformations $\rho=U_AT_B\feps^B\fr B = U_AT_B\m^B:TB\to T$ and $\lambda = U_A\feps^A T_B F_B:AT\to T$. Then $(T,\lambda,\rho)$ is an $\A$-$\B$ bialgebra functor.  
Note that $U_AT_B$ is also a left $\A$-algebra functor with action $U_A\feps^AT_B$.
Lemma \ref{lem.SA} implies that $U_AT_B$ preserves the coequaliser \eqref{trivial}. This means that, for any $\Y= (Y,\rho^Y)\in\EM\Yy\B$, the following diagram is a coequaliser in $\Xx$
\begin{equation}\label{eq.TBXalt}
\xymatrix{
U_AT_B\fr BBY \ar@<.5ex>[rr]^-{U_AT_B\m^B Y} \ar@<-.5ex>[rr]_-{U_AT_B\fr B\rho^Y} && U_AT_B\fr BY \ar[rr]^-{U_AT_B\rho^Y} && U_AT_B\Y \, .
}
\end{equation}
This diagram is exactly the coequaliser \eqref{eq.TBX} in this situation. Applying the functor $AA$ (resp.\ any functor $P:\Xx\to\Ww$) to the coequaliser \eqref{eq.TBXalt} yields the same result as applying the functor $AAU_AT_B$ (resp.\ $PU_AT_B$) to \eqref{trivial}. Therefore, the functor $AA$ (resp. $P$) preserves coequalisers \eqref{eq.TBX}.
\end{proof}

In the following  the notation introduced in Proposition~\ref{prop.TB} is used. For an $\A$-$\B$ bialgebra functor $\T$, $T_B$ denotes the functor defined by coequalisers \eqref{eq.TBX},  and $T=U_AT_B\fr B$.

\begin{lemma}\label{lem.TA}
Suppose that the coequalisers of the form \eqref{eq.TBX} and \eqref{eq.TAY} exist and they are preserved by $AA$, $\hT A$ and by $BB$ respectively, then the functor $\hT_A:\EM\Xx\A\to\EM\Yy\B$ preserves coequalisers of the form \eqref{eq.TBX}.
\end{lemma}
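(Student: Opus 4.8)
The plan is to recognise the desired preservation as an instance of the $3\times 3$ (interchange of colimits) lemma for coequalisers, carried out in the diagram-chasing style already used in the proof of Proposition~\ref{prop.TB}. Fix a $\B$-algebra $(Y,\rho^Y)$ and write $(*)$ for the coequaliser \eqref{eq.TBX}, namely $TBY \rightrightarrows TY \to T_BY$ with coequalising map $\tau Y$ and legs $\rho Y$, $T\rho^Y$. First I would record that, since $(*)$ is preserved by $AA$, Lemma~\ref{lem.coeqA^A} makes $(*)$ a coequaliser in $\EM\Xx\A$ (with $T_BY$ carrying the induced action $\rho^{T_BY}$), so that the functor $\hT_A$ may legitimately be applied to it; the goal is then to show that $\hT_A(*)$ is a coequaliser in $\EM\Yy\B$.

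Next I would assemble the $3\times 3$ diagram in $\Yy$ whose three columns are the defining coequalisers \eqref{eq.TAY} of $\hT_A$ evaluated at the three $\A$-algebras $(TBY,\lambda BY)$, $(TY,\lambda Y)$ and $(T_BY,\rho^{T_BY})$, i.e.\ the forks $\hT A(-)\rightrightarrows \hT(-)\to \hT_A(-)$ with legs $\hrho(-)$ and $\hT$ of the relevant action; these columns are coequalisers by the hypothesis that \eqref{eq.TAY} exists. The three rows are $(*)$ with $\hT A$, $\hT$, and $\hT_A$ applied respectively, and every square commutes by naturality of $\hrho$ and $\hlambda$ together with functoriality of $\hT_A$. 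The top row $\hT A(*)$ is a coequaliser precisely by the standing assumption that $\hT A$ preserves \eqref{eq.TBX}. For the middle row $\hT(*)$ I would invoke Lemma~\ref{lem.SA}: since $(\hT,\hrho)$ is a right $\A$-algebra functor and $(*)$ is preserved by $\hT A$, it is preserved by $\hT$ as well. This is the one non-formal input and the conceptual crux of the argument.

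With the top two rows and all three columns coequalisers, the standard $3\times 3$ diagram chase (exactly the ``diagram chasing argument'' of Proposition~\ref{prop.TB}) yields that the bottom row $U_B\hT_A(*)$ is a coequaliser in $\Yy$. Its objects and legs already lie in $\EM\Yy\B$, the $\B$-structures being those produced by the columns via Lemma~\ref{lem.coeqA^A} from the $BB$-preservation of \eqref{eq.TAY}. To finish, I would apply Lemma~\ref{lem.coeqA^A} to the pair $\hT_A(\rho Y),\hT_A(T\rho^Y)$, whose underlying $\Yy$-coequaliser has just been produced; this promotes it to a coequaliser in $\EM\Yy\B$, and by uniqueness of the induced action the result is necessarily $\hT_A(T_BY)$ with coequalising map $\hT_A(\tau Y)$, which is exactly the assertion that $\hT_A$ preserves $(*)$.

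I expect the obstacle to be twofold. The substantive step is the verification, via Lemma~\ref{lem.SA}, that $\hT$ preserves $(*)$; once the grid is in place the remaining planar chase is routine but must be organised so that the two coequaliser directions (the $T_B$-direction of $(*)$ and the $\hT_A$-direction of \eqref{eq.TAY}) genuinely interchange. The more delicate bookkeeping is the promotion from $\Yy$ to $\EM\Yy\B$: the application of Lemma~\ref{lem.coeqA^A} to the bottom row requires that $BB$ preserve the resulting coequaliser, which does not follow formally from the coequaliser being $BB$-preserved in the columns alone. This is automatic in the cocomplete settings of the intended applications, where $\Xx$ and $\Yy$ have coequalisers and $U_A$, $U_B$ preserve them, since Lemma~\ref{coequalizersEM} then forces $BB$ to preserve all coequalisers; in the general statement it is precisely the point at which the three preservation hypotheses must be read with care.
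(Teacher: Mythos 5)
Your proposal is, in its main body, exactly the paper's proof: the paper assembles the same $3\times 3$ diagram \eqref{diag.coeqTATB}, notes that the top row is a coequaliser by the $\hT A$-preservation hypothesis, that the middle row is a coequaliser by Lemma~\ref{lem.SA} applied to the right $\A$-algebra functor $(\hT,\hrho)$, that the three columns are the defining coequalisers \eqref{eq.TAY}, and concludes by the chase that the bottom row is a coequaliser in $\Yy$. The only divergence is the final promotion to $\EM\Yy\B$, which is precisely the point you flag. There the paper does not appeal to cocompleteness: it applies $BB$ to the whole of \eqref{diag.coeqTATB} and asserts that ``the same reasoning'' shows $BB$ preserves the bottom-row coequaliser, after which Lemma~\ref{lem.coeqA^A} lifts it to $\EM\Yy\B$; the columns of the $BB$-diagram are indeed coequalisers because $BB$ preserves \eqref{eq.TAY}. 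Note, however, that your caution is well placed: re-running the chase on the $BB$-diagram also requires its top two rows to be coequalisers, i.e.\ it requires $BB\hT A$ (and then, via Lemma~\ref{lem.SA} applied to $(BB\hT,BB\hrho)$, also $BB\hT$) to preserve \eqref{eq.TBX}, and this is not literally among the stated hypotheses of the lemma. So the paper's closing step is loose in exactly the way you describe, and becomes watertight under the stronger standing assumptions of Remark~\ref{rem.morita} (all coequalisers exist and $U_A$, $U_B$ preserve them, so that Lemma~\ref{coequalizersEM} forces $BB$ to preserve all coequalisers), which is the setting of the intended applications. In short: you reproduced the paper's argument and correctly isolated its weakest joint; what you lack relative to the paper is only its (itself imperfect) device of applying $BB$ to the entire diagram and repeating the chase, rather than deferring to cocompleteness.
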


\begin{proof}
Note that the functor $\hT_A$ is well-defined by (the dual version of) Proposition~\ref{prop.TB}. We can now consider the following diagram in $\Yy$:
\begin{equation}\label{diag.coeqTATB}
\xymatrix{
\hT ATBY \ar@<.5ex>[rr]^-{\hT A\rho Y} \ar@<-.5ex>[rr]_-{\hT AT\rho^Y} \ar@<-.5ex>[d]_-{\hrho TBY} \ar@<.5ex>[d]^-{\hT \lambda BY} && \hT ATY \ar[rr]^-{\hT A\tau Y} \ar@<-.5ex>[d]_-{\hrho TY} \ar@<.5ex>[d]^-{\hT \lambda Y} && \hT AT_BY \ar@<-.5ex>[d]_-{\hrho T_BY} \ar@<.5ex>[d]^-{\hT \rho^{T_BY}} \\
\hT TBY \ar@<.5ex>[rr]^-{\hT \rho Y} \ar@<-.5ex>[rr]_-{\hT T\rho^Y} \ar[d]^{\htau {TBY}} && \hT TY \ar[rr]^-{\hT\tau Y} \ar[d]^{\htau {TY}} && \hT T_BY \ar[d]^{\htau {T_BY}}\\
\hT_ATBY \ar@<.5ex>[rr]^-{\hT_A \rho Y} \ar@<-.5ex>[rr]_-{\hT_A \rho^Y} && \hT_A TY \ar[rr]^-{\hT_A\tau Y} && \hT_AT_BY \, .
}
\end{equation}
By assumption, the first row is a coequaliser; by Lemma \ref{lem.SA}, the second row is a coequaliser too. All three columns are coequalisers by definition.
Therefore, the lower row is an equaliser as well. If we apply the functor $BB$ to this diagram, the same reasoning yields that $BB$ preserves the equaliser in the lower row of  \eqref{diag.coeqTATB}, therefore it is an equaliser in $\EM\Yy\B$ by Lemma~\ref{lem.coeqA^A}.
\end{proof}

Recall that a {\em wide Morita context} $(F,G,\mu,\tau)$ between categories $\Cc$ and $\Dd$, consists of two functors $F:\Cc\to \Dd$ and $G:\Dd\to\Cc$, and two natural transformations $\mu:GF\to \Cc$ and $\tau:FG\to \Dd$, satisfying $F\mu=\tau F$ and $\mu G=G\tau$. 
In \cite{CIGT:wide},  {\em left} (resp.\ {\em right}) wide Morita contexts are studied. In this setting, $\Cc$ and $\Dd$ are abelian (or Grothendieck) categories and $F$ and $G$ are left (resp.\ right) exact functors. Left and right wide Morita contexts are used to characterise equivalences between the categories $\Cc$ and $\Dd$. 
In the remainder of this section we study wide Morita contexts between categories of algebras over monads. This allows us to weaken the assumptions made in \cite{CIGT:wide}. Moreover, we study the relationship between wide Morita contexts and Morita contexts.

Consider monads $\A$ on $\Xx$ and $\B$ on $\Yy$. A pair of functors $\hT_A:\EM\Xx\A\to\EM\Yy\B$ and $T_B:\EM\Yy\B\to\EM\Xx\A$ is said to be {\em algebraic} if the functors $AAU_AT_B$ and $U_B\hT_AA AU_AT_B$ preserve coequalisers of the form \eqref{trivial} for all $(Y,\rho^Y)\in\EM\Yy\B$, and the functors $BBU_B\hT_A$ and $U_AT_BB BU_B\hT_A$ preserve coequalisers of the form 
\begin{equation}\label{trivial2}
\xymatrix{
AAX \ar@<.5ex>[rr]^-{\m^A X} \ar@<-.5ex>[rr]_-{A\rho^X} && AX \ar[rr]^-{\rho^X} && X\, ,
}
\end{equation}
for all $(X,\rho^X)\in \EM\Xx\A$.
By an {\em algebraic} wide Morita context we mean a wide Morita context $(T_B,\hT_A,\homega,\omega)$ between categories of algebras $\EM\Yy\B$ and $\EM\Xx\A$, such that the functors $T_B$ and $\hT_A$ form an algebraic pair of functors.

\begin{remark}\label{rem.morita}
If the forgetful functors $U_B$ and $U_A$ preserve coequalisers and the categories $\EM\Yy\B$ and $\EM\Xx\A$ have all coequalisers (e.g.\  $\EM\Yy\B$ and $\EM\Xx\A$ are abelian categories, as in \cite{CIGT:wide}) or equivalently the categories $\Yy$ and $\Xx$ have all coequalisers, see Lemma \ref{coequalizersEM}, then the situation simplifies significantly. In this case, for any bialgebra functors $\T$ and $\bhT$, all coequalisers of the form \eqref{eq.TBX} and \eqref{eq.TAY} exist and, by Lemma \ref{coequalizersEM}, are preserved by $AA$ and $BB$ respectively. 
Furthermore, in this situation, $(T_B,\hT_A)$ is an algebraic pair if and only if the functors $T_B$ and $\hT_A$ preserve coequalisers (of the form \eqref{trivial} and \eqref{trivial2} respectively). In particular, it is an adjoint algebraic pair if $\hT_A$ preserves coequalisers (as any left adjoint preserves all coequalisers). Also, if $\EM\Yy\B$ and $\EM\Xx\A$ are abelian, then a right wide Morita context between  $\EM\Yy\B$ and $\EM\Xx\A$ is always algebraic.
\end{remark}

\begin{theorem}\label{th.wide}
Let $\A$ be a monad on $\Xx$ and $\B$ a monad on $\Yy$.
There is a bijective correspondence between the following data:
\begin{enumerate}[(i)]
\item Morita contexts  $\TT=(\A,\B,\T,\bhT,\ev,\hev)$ on $\Xx$ and $\Yy$, such that all coequalisers of the form \eqref{eq.TBX} and \eqref{eq.TAY} exist, and they are preserved by $AA$, $\hT A$ and $BB$, $TB$ respectively;
\item functors $T_B:\EM\Yy\B\to \EM\Xx\A$ and $\hT_A:\EM\Xx\A\to\EM\Yy\B$, and natural transformations $\homega:\hT_AT_B\to {\EM\Yy\B}$ and $\omega:T_B\hT_A\to {\EM\Xx\A}$ such that $(T_B,\hT_A,\homega,\omega)$ is an algebraic wide Morita context between $\EM\Yy\B$ and $\EM\Xx\A$.
\end{enumerate}
\end{theorem}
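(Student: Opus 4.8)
The plan is to use Proposition~\ref{prop.TB} and its left--right dual to dispose of the functorial part of the correspondence, and then to concentrate all the work on a bijection between the connecting data $(\ev,\hev)$ of a Morita context and the connecting natural transformations $(\omega,\homega)$ of a wide Morita context, built by descent through the defining coequalisers. Concretely, Proposition~\ref{prop.TB} already matches an $\A$-$\B$ bialgebra functor $\T$ (with coequalisers \eqref{eq.TBX} existing and preserved by $AA$) with a functor $T_B:\EM\Yy\B\to\EM\Xx\A$ such that $AAU_AT_B$ preserves \eqref{trivial}; dually, $\bhT$ corresponds to $\hT_A:\EM\Xx\A\to\EM\Yy\B$. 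The extra hypotheses in (i) that $\hT A$ preserves \eqref{eq.TBX} and $TB$ preserves \eqref{eq.TAY} are exactly the inputs of Lemma~\ref{lem.TA}, which shows that $\hT_A$ preserves coequalisers of the form \eqref{eq.TBX} and (dually) $T_B$ preserves those of the form \eqref{eq.TAY}; combined with the two single-sided statements from Proposition~\ref{prop.TB}, this is precisely the assertion that $(T_B,\hT_A)$ is an algebraic pair. So the genuine content of the theorem is the bijection between $(\ev,\hev)$ and $(\omega,\homega)$ compatible with this dictionary.

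For the direction (i)$\Rightarrow$(ii) I would construct $\omega:T_B\hT_A\to \EM\Xx\A$ (and symmetrically $\homega$) by descent. For $(X,\rho^X)\in\EM\Xx\A$ the candidate is the composite $\rho^X\circ\ev X:T\hT X\to X$, and one checks it factors successively through the two coequalisers that assemble $U_AT_B\hT_A X$. Passage through the $\hT_A$-coequaliser \eqref{eq.TAY} uses the right $\A$-linearity of $\ev$ (half of $\ev$ being an $\A$-$\A$ bialgebra morphism) together with the algebra axiom of $(X,\rho^X)$, and requires $T$ to preserve \eqref{eq.TAY}, which follows from the $TB$-preservation hypothesis via Lemma~\ref{lem.SA}. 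Passage through the $T_B$-coequaliser \eqref{eq.TBX} uses that $\ev$ is $\B$-balanced, i.e.\ the right diagram of \eqref{balanced}. That the induced map is a morphism in $\EM\Xx\A$ (left $\A$-linear) uses the remaining half of $\ev$ being a bialgebra morphism, reduced to $T\hT X$ because $A$ preserves the coequalisers; naturality of $\omega$ is then forced by universality. Finally the two wide Morita compatibilities $T_B\homega=\omega T_B$ and $\homega\hT_A=\hT_A\omega$ will come from the two Morita identities \eqref{morita}, again verified after precomposing with the defining coequaliser projections and invoking universality.

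For the direction (ii)$\Rightarrow$(i) I would reverse this: recover $\T=U_AT_B\fr B$ and $\bhT=U_B\hT_A\fr A$ from Proposition~\ref{prop.TB}, and define $\ev X$ as $U_A$ applied to the composite $T\hT X=U_AT_B\fr BU_B\hT_A\fr AX\longrightarrow U_AT_B\hT_A\fr AX\longrightarrow U_A\fr AX=AX$, where the first arrow collapses $\fr BU_B$ on $\hT_A\fr AX$ via the counit $\feps^B$ and the second is $U_A\omega\fr AX$; symmetrically for $\hev$. Here the algebraicity of $(T_B,\hT_A)$ is essential: since $T_B\hT_A$ preserves the free-algebra coequalisers \eqref{trivial2}, any natural transformation $T_B\hT_A\to\EM\Xx\A$ is determined by its values on free algebras $\fr AX$, which is what makes the two assignments $(\ev,\hev)\leftrightarrow(\omega,\homega)$ mutually inverse. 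The Morita context axioms for the recovered $(\ev,\hev)$ --- the bialgebra-morphism property, the identities \eqref{morita}, and the balancing \eqref{balanced} --- are then read off from the naturality of $\omega$ and $\homega$, the $\A$-linearity of each $\omega X$, the wide Morita compatibilities, and the fact that $\omega$ is defined on the double coequaliser $T_B\hT_A$, which renders the balancing conditions automatic.

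I expect the main obstacle to be the two descent verifications in the (i)$\Rightarrow$(ii) step --- establishing that $\rho^X\circ\ev X$ coequalises both defining pairs and that the resulting arrow respects every piece of structure --- together with the closely related point that the algebraicity hypothesis is exactly strong enough to make $(\ev,\hev)$ and $(\omega,\homega)$ determine each other on free algebras, so that one obtains a genuine bijection rather than merely a pair of mutually defined maps. A structural subtlety worth isolating is that the balancing conditions \eqref{balanced} and the Morita identities \eqref{morita} play strictly complementary roles (well-definedness of the connecting transformations versus the wide Morita compatibility), and keeping the left/right $\A$- and $\B$-bookkeeping straight through the iterated coequalisers is where the real care is needed.
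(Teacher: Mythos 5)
Your proposal is correct and follows essentially the same route as the paper: Proposition~\ref{prop.TB} and its dual for the functorial dictionary, Lemma~\ref{lem.TA} (with Lemma~\ref{lem.SA}) for the preservation properties, descent of $\rho^X\circ\ev X$ (resp.\ $\rho^Y\circ\hev Y$) through the two defining coequalisers to produce $\omega$ (resp.\ $\homega$), and in the converse direction the very composites $U_A\omega\fr A\circ U_AT_B\feps^B\hT_A\fr A$ and $U_B\homega\fr B\circ U_B\hT_A\feps^AT_B\fr B$ that the paper writes down. Your identification of which axiom enters at which coequaliser (right $\A$-linearity plus the algebra axiom for \eqref{eq.TAY}, balancedness for \eqref{eq.TBX}, the Morita identities \eqref{morita} for the wide-Morita compatibilities) matches the paper's diagram chase, and your remark that algebraicity makes transformations out of $T_B\hT_A$ determined on free algebras supplies the mutual-inverseness check that the paper leaves implicit.
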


\begin{proof}
$\ul{(i)\Rightarrow(ii)}$.
The existence of the functors $\hT_A$ and $T_B$, forming an algebraic pair, follows from Proposition~\ref{prop.TB}. Moreover, in light of Lemma~\ref{lem.TA}, $\hT_A$ preserves coequalisers of the form \eqref{eq.TBX} and $T_B$ preserves coequalisers of the form \eqref{eq.TAY}.
Consider again diagram \eqref{diag.coeqTATB}
in which now all the rows and columns are coequalisers. 
The natural transformation $\hev$ induces a map $\rho^Y\circ \hev Y : \hT TY \to Y$ that equalises both the horizontal and vertical arrows. A diagram chasing argument affirms the existence of a map $\homega Y: \hT_A T_B Y\to Y$. The naturality of $\hev$ implies that $\homega$ is a natural transformation as well.

Similarly, one defines $\omega:T_B\hT_A\to {\EM\Xx\A}$. To check that the compatibility conditions between $\homega$ and $\omega$ hold, one starts with objects $TB\hT ATBY$ and $\hT ATB\hT AX$ and constructs coequaliser diagrams resulting in $T_B\hT_AT_BY$ and $\hT_AT_B\hT_AX$, respectively. The compatibility conditions between $\ev$ and $\hev$ (diagrams \eqref{morita}-\eqref{balanced}), as well as the facts that they are bialgebra morphisms and  that $T$ and $\hT$ are bialgebra functors, induce the needed compatibility conditions for $\homega$ and $\omega$. 

$\ul{(ii)\Rightarrow(i)}$.
By Proposition~\ref{prop.TB} there exist bialgebra functors $\T$ and $\bhT$, where $T=U_AT_B\fr B$ and $\hT=U_B\hT_A\fr A$, such that  all coequalisers of the form \eqref{eq.TBX} and \eqref{eq.TAY} exist, and they are preserved by $AA$, $\hT A$ and $BB$, $TB$ respectively. Define 
\begin{eqnarray*}
\xymatrix{
\ev: T\hT= U_AT_B\fr B U_B\hT_A\fr A \ar[rr]^-{U_AT_B\feps^B\hT_A\fr A} && U_AT_B\hT_A\fr A \ar[rr]^-{U_A\omega \fr A} && U_A\fr A =A\, , }\\
\xymatrix{
\hev: \hT T= U_B\hT_A\fr AU_AT_B\fr B \ar[rr]^-{U_B\hT_A\feps^AT_B\fr B} && U_B\hT_AT_B\fr B \ar[rr]^-{U_B\homega \fr B} && U_B\fr B=B \, ,
}
\end{eqnarray*}
where $\feps^B$ and $\feps^A$ are the counits of the adjunctions $(\fr B,U_B)$ and $(\fr A,U_A)$. 
Then $(\A,\B,\T,\bhT,\ev,\hev)$ is a Morita context.
\end{proof}

Consider a Morita context $(\A,\B,\T,\bhT,\ev,\hev)$ on $\Xx$ and $\Yy$. Suppose that the coequalisers of the form \eqref{eq.TBX} and \eqref{eq.TAY} exist and are preserved by  $AA$, $\hT A$ and $BB$, $TB$ respectively. 
Because $\ev$ and $\hev$ are respectively $B$ and $A$-balanced, the universal property of the coequaliser implies the existence of
natural transformations $\pi:T_B\hT\to A$ and $\hpi:\hT_AT\to B$ such that, for all objects $X\in\Xx$ and $Y\in\Yy$,
$$ \ev X = \pi X\circ \tau \hT X \quad {\rm and}\quad \hev Y = \hpi Y \circ \htau TY. $$
 Since the coequalisers \eqref{eq.TBX} and \eqref{eq.TAY} are preserved by $\hT_A$ and $T_B$, respectively (see Proposition~\ref{prop.TB}), for all objects $(X,\rho^X)\in \Xx^\A$, $(Y,\rho^Y)\in \Yy^\B$, 
\begin{equation}\label{piomega}
\rho^X\circ \pi X = \omega X \circ T_B \htau X \quad {\rm and} \quad \rho^Y\circ \hpi Y = \homega Y \circ \hT_A \tau Y ,
\end{equation}
where $\omega:T_B\hT_A\to \EM\Xx\A$ and $\homega:\hT_AT_B\to \EM\Yy\B$ are defined in Theorem~\ref{th.wide}.
With this notation, we have the following

\begin{theorem}\label{th.morita}
Let $\A$ be a monad on $\Xx$ and $\B$ a monad on $\Yy$.
\begin{enumerate}
\item There is a bijective correspondence between the following data:
\begin{enumerate}[(i)]
\item pairs of adjoint functors $(T_B:\EM\Yy\B\to\EM\Xx\A,\hT_A:\EM\Xx\A\to\EM\Yy\B)$ such that $(T_B,\hT_A)$ is an algebraic pair and $T_B$ is fully faithful;
\item algebraic wide Morita contexts $(T_B,\hT_A,\omega,\homega)$ such that $\homega$ is a natural isomorphism;
\item Morita contexts $(\A,\B,\T,\bhT,\ev,\hev)$ such that all coequalisers of the form \eqref{eq.TBX} and \eqref{eq.TAY} exist and are preserved by  $AA$, $\hT A$ and $BB$, $TB$, respectively, and there exists a natural transformation $\hchi:\Yy\to \hT_AT$ such that $\hpi \circ\hchi= \biota^B$.
\end{enumerate}
\item There is a bijective correspondence between the following data:
\begin{enumerate}[(i)]
\item inverse pairs of equivalences $(T_B:\EM\Yy\B\to\EM\Xx\A,\hT_A:\EM\Xx\A\to\EM\Yy\B)$ such that $(T_B,\hT_A)$ is an algebraic pair;
\item algebraic wide Morita contexts $(T_B,\hT_A,\omega,\homega)$, such that $\homega$ and $\omega$ are natural isomorphisms;
\item Morita contexts $(\A,\B,\T,\bhT,\ev,\hev)$ such that all coequalisers of the form \eqref{eq.TBX} and \eqref{eq.TAY} exist and are preserved by  $AA$, $\hT A$  and $BB$, $TB$ respectively, and there exist natural transformations $\hchi:\Yy\to \hT_AT$ and $\chi:\Xx\to T_B\hT$ such that $\hpi \circ\hchi = \biota^B$ and $\pi\circ\chi=\biota^A$. \end{enumerate}
\end{enumerate}
\end{theorem}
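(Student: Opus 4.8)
The plan is to deduce the whole statement from Theorem~\ref{th.wide} together with two essentially formal observations, so that almost no new computation with the coequalisers \eqref{eq.TBX} and \eqref{eq.TAY} is needed. Indeed, the hypotheses on existence and preservation of these coequalisers are precisely those appearing in item~(iii) of both parts and built into the notion of an algebraic wide Morita context in item~(ii); under them Theorem~\ref{th.wide} already provides a bijection between the Morita contexts of~(iii) (ignoring the $\hchi$ and $\chi$ clauses) and the algebraic wide Morita contexts of~(ii) (ignoring the isomorphy clauses). It therefore remains to match the two extra pieces of data: the relationship between~(i) and~(ii), and the translation of the section conditions in~(iii) into invertibility of $\homega$ and $\omega$ in~(ii).

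First I would establish the formal bridge between~(i) and~(ii). For any wide Morita context $(F,G,\mu,\tau)$ with $\mu:GF\to\id$ and $\tau:FG\to\id$, the defining identities $F\mu=\tau F$ and $\mu G=G\tau$ turn, as soon as $\mu$ is a natural isomorphism, into exactly the two triangular identities for the pair $(F,G)$ with unit $\mu^{-1}$ and counit $\tau$. Hence $\mu$ invertible is equivalent to $(F,G)$ being an adjoint pair, $F$ left adjoint to $G$, whose unit is invertible, that is with $F$ fully faithful; and $\mu,\tau$ both invertible is equivalent to $(F,G)$ being an adjoint equivalence. Applying this with $(F,G,\mu,\tau)=(T_B,\hT_A,\homega,\omega)$ yields the equivalence of~(i) and~(ii) in part~(1) (take $\mu=\homega$, so $T_B$ is the left adjoint and $T_B$ is fully faithful precisely when $\homega$ is a natural isomorphism) and in part~(2) ($\homega$ and $\omega$ both invertible is the same as $(T_B,\hT_A)$ being inverse equivalences).

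The core of the argument is the bridge between~(iii) and~(ii), namely the claim that $\homega$ is a natural isomorphism if and only if there is a natural transformation $\hchi:\Yy\to\hT_AT$ with $\hpi\circ\hchi=\biota^B$ (and, by the left--right symmetry of a Morita context, the mirror statement for $\omega$ and $\chi$). Here I would use the relation \eqref{piomega}, $\rho^Y\circ\hpi Y=\homega Y\circ\hT_A\tau Y$, in which $\hT_A\tau Y$ is the epimorphic coequaliser map, since by Lemma~\ref{lem.TA} the functor $\hT_A$ preserves the coequalisers \eqref{eq.TBX}. Given $\hchi$, put $\xi Y:=\hT_A\tau Y\circ\hchi Y:Y\to\hT_AT_BY$; then \eqref{piomega}, the section identity $\hpi\circ\hchi=\biota^B$ and the unit axiom of the $\B$-algebra $(Y,\rho^Y)$ give immediately $\homega Y\circ\xi Y=\rho^Y\circ\hpi Y\circ\hchi Y=\rho^Y\circ\biota^BY=\id_Y$, so $\homega Y$ is a split epimorphism. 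For the reverse composite $\xi Y\circ\homega Y=\id$ I would precompose with the epimorphism $\hT_A\tau Y$ and, using \eqref{piomega} once more, reduce it to an identity that holds because $\hev$ is a $\B$-balanced bialgebra morphism satisfying the Morita identities \eqref{morita}--\eqref{balanced}; this is the categorical counterpart of the classical fact that surjectivity of one trace map of a ring-theoretic Morita context forces that map to be invertible. As the underlying forgetful functor reflects isomorphisms, $\homega$ is then a natural isomorphism in $\EM\Yy\B$. Conversely, if $\homega$ is invertible, the already established equivalence (i)$\Leftrightarrow$(ii) makes $\homega^{-1}$ the unit of an adjunction; evaluating it on the free algebra $\fr BY=(BY,\m^BY)$, where $\hT_AT_B\fr BY=\hT_ATY$ because $T=U_AT_B\fr B$, and composing with $\biota^BY$ produces the desired $\hchi Y$, the identity $\hpi\circ\hchi=\biota^B$ following from \eqref{piomega} and the triangular identities.

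Finally I would assemble the pieces. For part~(1), (i)$\Leftrightarrow$(ii) is the formal lemma of the second paragraph, while (ii)$\Leftrightarrow$(iii) is Theorem~\ref{th.wide} refined, on the $\homega$/$\hchi$ side, by the bridge lemma of the third paragraph. For part~(2) one runs the bridge lemma simultaneously on the $\homega$/$\hchi$ side and on its mirror $\omega$/$\chi$ side, so that both natural transformations become isomorphisms at once. The main obstacle is the reverse half of the bridge lemma, that is, upgrading the one-sided splitting of $\homega Y$ obtained from $\hchi$ to a genuine two-sided inverse: this is the only point where the balancedness condition and the two Morita identities must be used in an essential, non-formal way, exactly as in the passage from a surjective trace map to an isomorphism in classical Morita theory.
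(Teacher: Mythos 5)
Your proposal is correct and takes essentially the same route as the paper: it invokes Theorem~\ref{th.wide} for the core bijection, proves (i)$\Leftrightarrow$(ii) by the same formal observation that the wide Morita identities plus invertibility of $\homega$ are exactly the triangular identities with unit $\homega^{-1}$ and counit $\omega$, and constructs the inverse of $\homega$ as $\hT_A\tau Y\circ\hchi Y$, which is precisely the paper's $\hvarpi$, recovering $\hchi$ conversely as $\hvarpi\circ\biota^B$ on free algebras. The only slight looseness is in the reverse composite: to verify $\xi Y\circ\homega Y=\id$ you must precompose with the composite epimorphism $\hT_A\tau Y\circ\htau{TY}$ (not merely $\hT_A\tau Y$), since only after the further precomposition with $\htau{TY}$ can $\hpi$ be replaced by $\hev$ and the Morita identities \eqref{morita} applied, which is exactly the chain of equalities the paper carries out.
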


\begin{proof}
We only prove part (1), as (2) follows by combining (1) with its $\Xx$-$\Yy$ symmetric version. 

$\ul{(ii)\Leftrightarrow(i)}$. Let  $(T_B,\hT_A,\omega,\homega)$ be an algebraic wide Morita context and let $\hvarpi$ denote the inverse natural transformation of $\homega$. 
For any $\X\in\EM\Xx\A$, 
$$\hT_A\omega \X\circ \hvarpi \hT_A\X = \homega \hT_A\X\circ \hvarpi \hT_A\X = \hT_A\X.$$
The first equality is the compatibility condition between $\omega$ and $\homega$ in the wide Morita context (see Theorem \ref{th.wide}). Similarly, $T_B\homega\Y\circ \hvarpi T_B\Y=T_B\omega\Y\circ T_B\hvarpi\Y = T_B\Y$,  for all $\Y\in\EM\Yy\B$. Hence $(\hT_A,T_B)$ is  an adjoint pair with unit  $\hvarpi$ and counit $\omega$. 
$T_B$ is fully faithful since the unit of adjunction is a natural isomorphism. Conversely, if $(\hT_A,T_B)$ is an algebraic adjoint pair and $T_B$ is fully faithful, then similar computation confirms that $(T_B,\hT_A,\omega,\homega)$ is an algebraic wide Morita context, where $\omega$ is the counit and $\homega$ is the inverse of the unit of the adjunction.

$\ul{(ii)\Leftrightarrow(iii)}$. 
Assume first that the statement $(iii)$ holds.
In light of Theorem \ref{th.wide} suffices it to construct the natural inverse $\hvarpi:\EM\Yy\B\to\hT_AT_B$ of $\homega$. 
For any algebra $\Y=(Y,\rho^Y)\in\EM\Yy\B$, set $\hvarpi Y= \hT_A \tau Y \circ\hchi Y$.
The 
naturality of $\hvarpi$ follows by the preservation of coequalisers by $\hT_A$ and $T_B$ (see  diagram \eqref{diag.coeqTATB}). Take any $(Y,\rho^Y)\in\EM\Yy\B$ and compute
$$
\homega Y\circ \hvarpi Y = \homega Y\circ \hT_A \tau Y \circ\hchi Y = \rho^Y\circ\hpi Y\circ \hchi Y = \rho^Y \circ\biota^BY = Y.
$$
The second equality follows by \eqref{piomega}.
Furthermore, for any $(Y,\rho^Y)\in\EM\Yy\B$, 
\begin{eqnarray*}
\hvarpi Y\circ \homega Y\circ\hT_A\tau Y \circ\htau {TY} 
&=&  \hT_A\tau Y\circ \hchi Y \circ \rho^Y\circ \hpi Y \circ \htau {TY}\\
&=& \hT_A\tau Y\circ \hchi Y \circ \rho^Y\circ \hev Y\\
&=&  \hT_A\tau Y\circ \hT_A T\rho^Y \circ \hchi BY \circ \hev Y\\
&=& \hT_A\tau Y\circ \hT_A \rho Y \circ \hT_AT\hev Y\circ \hchi \hT TY \\
&=& \hT_A\tau Y\circ \hT_A \lambda Y\circ \hT_A  \ev T Y \circ \hchi \hT TY.
\end{eqnarray*}
The first two equalities follow by the definitions of $\hvarpi$ and $\hpi$, and by\eqref{piomega}.
The third step is a consequence of the naturality of $\hchi$. The equalising property of the maps $\hT\tau Y$ and $\htau {TY}$, as well as the equivalent expressions for the Godement product  are used in the fourth step. The final equality  follows by the first of diagrams \eqref{morita} that express compatibility between $\ev$ and $\hev$ in the Morita context. On the other hand,
\begin{eqnarray*}
\hT_A\lambda Y\circ \hT_A\ev TY \!\!\!\!\! & \circ & \!\!\!\!\! \htau {T\hT TY}
= \htau {TY} \circ \hT\lambda Y\circ \hT\ev TY 
= \htau {TY} \circ \hrho TY \circ \hT\ev TY \\
&=& \htau_{TY} \circ \hlambda TY \circ \hev\hT TY
= \htau {TY} \circ \hlambda TY \circ \hpi\hT TY \circ \htau {T\hT TY},
\end{eqnarray*}
where the first equality follows by the naturality of $\htau$, the second is the equalising property of $\htau TY$ (recall from Section~\ref{sec.morita} that $\rho^{TY} = \lambda Y$), the third equality follows by the second of diagrams \eqref{morita}. The final equality is the defining property of $\hpi Y$.  Since $\htau {T\hT TY}$ is an epimorphism, we conclude that $\hT_A\lambda Y\circ \hT_A\ev TY = \htau {TY} \circ \hlambda TY \circ \hpi\hT TY$, so 
\begin{eqnarray*}
\hvarpi Y\circ \homega Y\circ\hT_A\tau Y \circ\htau {TY} 
&=& \hT_A\tau Y\circ\htau {TY}\circ \hlambda T Y\circ \hpi  \hT T Y \circ \hchi \hT TY\\
&=&\hT_A\tau Y\circ\htau {TY}\circ \hlambda T Y\circ \biota^B \hT T Y
=\hT_A\tau Y\circ\htau {TY} \, .
\end{eqnarray*}
Since $\hT_A\tau Y\circ\htau {TY}$ is an epimorphism,  $\hvarpi\circ \homega$ is the identity natural transformation on $\hT_AT_B$, and hence $\homega$ is a natural isomorphism with inverse $\hvarpi$.

In the converse direction, apply  Theorem \ref{th.wide} to obtain a Morita context. Let  $\hvarpi:\EM\Yy\B\to \hT_AT_B$ be the inverse of $\homega$ and put $\hchi = \hvarpi \circ \biota^B$. It follows from naturality that $\hchi$ satisfies the required conditions.
\end{proof}

\begin{remark}
A sufficient condition for the existence of a natural transformation $\hchi$ as in part (1)(iii) of Theorem \ref{th.morita} is the existence of a natural transformation $\hve: \Yy \to \hT T$ such that $\hev\circ\hve = \biota^B $. In the case of a ring-theoretic Morita context (see Remark \ref{rem.Moritaring}) this condition expresses exactly that the Morita map $\hev$ is surjective.
\end{remark}

\begin{remark}
In case the forgetful functors $U_A$ and $U_B$ preserve coequalisers and  $\EM\Xx\A$ and $\EM\Yy\B$ (or equivalently $\Xx$ and $\Yy$) have all coequalisers, Theorem~\ref{th.morita} has the following slightly stronger formulation, which follows directly from the observations made in Remark \ref{rem.morita}.

\noindent {\em
There is a bijective correspondence between the following data:
\begin{enumerate}[(i)]
\item pairs of adjoint functors (resp.\ equivalences of categories) $(T_B:\EM\Yy\B\to\EM\Xx\A,\hT_A:\EM\Xx\A\to\EM\Yy\B)$ such that $\hT_A$ preserves coequalisers;
\item right wide Morita contexts $(T_B,\hT_A,\omega,\homega)$ such that $\homega$ is a natural isomorphism (resp.\ $\homega$ and $\omega$ are natural isomorphisms);
\item Morita contexts $(\A,\B,\T,\bhT,\ev,\hev)$ such that $\hT$ and $T$ preserve coequalisers and there exist a natural transformation $\hchi:\Yy\to \hT_AT$ such that $\hpi \circ\hchi= \biota^B$ (resp.\ there exists as well a natural transformation $\chi:\Xx\to T_B\hT$ such that $\pi\circ\chi=\biota^A$).
\end{enumerate}
}
\end{remark}

\section{Examples and applications}\label{sec.examples}
In this section we apply the criterion for moritability to specific situations of one adjunction, ring-theoretic Morita contexts,  and herds and pre-torsors.

\subsection {Blowing up one adjunction}

Consider an adjunction $(L:\Xx\to \Tt,R:\Tt\to \Xx)$ with unit $\eta$ and counit $\varepsilon$. Let $\C=(LR,L\eta R,\varepsilon)$ be the associated comonad on $\Tt$ and $\A=(RL,R\varepsilon L,\eta)$ the associated monad on $\Xx$; see Section~\ref{sec.eil-moo}. Associated to the Eilenberg-Moore category of $\C$-coalgebras $\EMC \Tt \C$, there is a second adjunction $(U^C:\EMC \Tt \C\to \Tt,\frc C:\Tt\to \EMC \Tt \C)$ with unit $\nu$ and counit $\zeta$, and  hence there is an object $(\Tt,(L,R),(U^C,\frc C))$ in the category $\Adj(\Xx,\EMC \Tt \C)$. The adjunction $(U^C,\frc C)$ induces  a monad $\B=(\frc CU^C,\frc C\zeta U^C, \nu)=(LR, LR\varepsilon, \nu)$ on $\EMC \Tt \C$. Therefore, there is a Morita context
\begin{equation}\label{moritablow}
\TT=(\A,\B,\T, \bhT,\ev=R\varepsilon L, \hev= LR\varepsilon),
\end{equation}
where $T= R$ and $\hT=LRL$, and the corresponding  Eilenberg-Moore category $\EMB \TT$ can be constructed. This leads to  the following diagram of functors.
\[
\xymatrix{
\Xx \ar@<.5ex>[ddrr]^-{L} \ar@<.5ex>[dddd]^-{\hT} \ar@<.5ex>[rrrrrr]^-{F_A} &&&&&& \EM \Xx \A \ar@<.5ex>[llllll]^-{U_A}  \ar@<.5ex>[dddd]^-{\Lambda} \ar@<.5ex>[ddll]^-{H} \\   \\
&& \Tt  \ar@<.5ex>[uull]^-{R}  \ar@<.5ex>[ddll]^-{\frc C} \ar[rr]^-K \ar@/^/[rrrruu]^-{K_A} \ar@/_/[rrrrdd]^-{K_B}
&& \EMB \TT \ar@<.5ex>[uurr]^-{V_A} \ar[ddrr]^{V_B} \\  \\
\EMC \Tt \C \ar@<.5ex>[uurr]^-{U^C} \ar@<.5ex>[uuuu]^-{T} \ar@<.5ex>[rrrrrr]^-{F_B}
&&&&&& (\EMC \Tt \C)\sp \B \ar@<.5ex>[llllll]^-{U_B} 
}
\]
The adjunctions $(F_A,U_A)$ and $(F_B,U_B)$ are the usual free algebra--forgetful adjunctions associated to a monad (see Section~\ref{sec.eil-moo}).  The category $(\EMC \Tt \C)\sp \B$ is the category of dual descent data and consists of triples $(Y,\rho^Y,\tau^Y)$, where $(Y,\rho^Y:Y\to LRY)$ is an object in $\EMC \Tt \C$, and $\tau^Y: LRY\to Y$ is a morphism in $\EMC \Tt \C$ that satisfies the following conditions $\tau^Y\circ \rho^Y= Y$ and $\tau^Y\circ LR\tau^Y= \tau^Y\circ LR\varepsilon Y$.
Finally, the functor $\Lambda:\EM \Xx \A\to (\EMC \Tt \C)\sp \B$ is defined as follows: for all objects $(X,\rho^X)$ in $\EM \Xx \A$, set $\Lambda(X,\rho^X) := (LX, L\eta X, L\rho^X)$.
Here $V_A$ and $V_B$ denote the obvious forgetful functors. The functor $H$ can now be defined as follows:
$$H(X,\rho^X)=((X,\rho^X),\Lambda(X,\rho^X), \rho^X,L\rho^X).$$

\begin{lemma}\label{lem.HV}
$(H,V_A)$ is a pair of adjoint functors, and $H$ is fully faithful. Furthermore, there is a natural transformation $\gamma:\Lambda V_A\to V_B$, for all objects  $\X = ((X,\rho^X), (Y,\rho^Y,\tau^Y), v,w)$ in $\EMB \TT$,  given by 
$\gamma\X= w\circ L\eta X :L X\to Y$.
\end{lemma}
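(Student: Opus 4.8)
The plan is to realise $(H,V_A)$ as an adjunction whose unit is the identity. Indeed, inspection of the definition of $H$ shows that $V_A H=\id_{\EM\Xx\A}$, since $V_A$ merely returns the first component $(X,\rho^X)$ of $H(X,\rho^X)$; so the natural candidate for the unit is the identity natural transformation. Because a left adjoint whose unit is invertible is automatically full and faithful, the full faithfulness of $H$ will come for free once the adjunction is established. Thus all the content lies in producing a counit $\epsilon\colon HV_A\to \id_{\EMB\TT}$ (distinct from the counit $\varepsilon$ of $(L,R)$) and verifying the triangular identities.

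For an object $\X=((X,\rho^X),(Y,\rho^Y,\tau^Y),v,w)$ I would set $\epsilon\X:=(\id_X,\gamma\X)$ with $\gamma\X=w\circ L\eta X\colon LX\to Y$, precisely the map in the statement. The first task is to check that $\gamma\X$ is a morphism $\Lambda V_A\X=(LX,L\eta X,L\rho^X)\to V_B\X=(Y,\rho^Y,\tau^Y)$ in $(\EMC\Tt\C)^{\B}$, i.e.\ that it is $\C$-colinear and $\B$-linear. Colinearity follows by combining naturality of $\eta$ with the $\C$-colinearity of $w$ (valid since $w$ is by definition a morphism in $\EM\Yy\B=(\EMC\Tt\C)^{\B}$, hence in particular a morphism in $\EMC\Tt\C$). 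The second task is that the pair $(\id_X,\gamma\X)$ satisfies the two squares \eqref{fgcomp}. Here the right-hand square reads $\rho^X=v\circ R\gamma\X$, and it drops out of the left diagram of \eqref{vwass1} (which in this example says $v\circ Rw=\rho^X\circ R\varepsilon LX$) together with the triangular identity $\varepsilon L\circ L\eta=\id$.

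The one genuinely non-formal point, and the step I expect to be the main obstacle, is the left-hand square of \eqref{fgcomp}, which amounts to the identity $w\circ L\eta X\circ L\rho^X=w$; this is where the full strength of the Morita-context axioms on $\X$ is needed. The route I would take is: rewrite $L\eta X\circ L\rho^X=LRL\rho^X\circ L\eta RLX$ using naturality of $\eta$ at $\rho^X$; use the left diagram of \eqref{vwass2} together with the $\B$-linearity of $w$ (right diagram of \eqref{vwlin}) to replace $w\circ LRL\rho^X$ by $\tau^Y\circ LRw$; use the $\C$-colinearity of $w$ to replace $LRw\circ L\eta RLX$ by $\rho^Y\circ w$; and finally collapse the composite by the $\B$-algebra unit law $\tau^Y\circ\rho^Y=\id_Y$, which holds because the unit of the comonad Eilenberg--Moore adjunction $(U^C,\frc C)$ is precisely the coalgebra structure $\rho^Y$. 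Noting that the $\B$-action on $\Lambda V_A\X$ is $L\rho^X$, this identity is exactly one half of the $\B$-linearity of $\gamma\X$; the complementary equality $\tau^Y\circ LR\gamma\X=w$ is a one-line consequence of the $\B$-linearity of $w$ and the triangular identity, completing the verification that $\epsilon\X$ is a morphism in $\EMB\TT$.

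It then remains to assemble the adjunction. Naturality of $\epsilon$ is exactly the naturality of $\gamma$, which follows from the left square of \eqref{fgcomp} applied to an arbitrary morphism of $\EMB\TT$ together with naturality of $L\eta$. One triangular identity is trivial because the unit is the identity; the other, $\epsilon H=\id_H$, reduces to the computation $\gamma_{H(X,\rho^X)}=L\rho^X\circ L\eta X=L(\rho^X\circ\eta X)=\id_{LX}$, i.e.\ to the $\A$-algebra unit law. This proves that $(H,V_A)$ is an adjoint pair and, the unit being the identity, that $H$ is full and faithful. Finally, the natural transformation of the statement is nothing but $\gamma=V_B\epsilon$, the $\B$-component of the counit, whence $\gamma\colon \Lambda V_A\to V_B$ with $\gamma\X=w\circ L\eta X$ as claimed.
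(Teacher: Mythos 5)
Your proposal is correct and takes essentially the same route as the paper: the paper's proof likewise takes the identity as the unit and the counit $\beta\X=(\id_X,\,w\circ L\eta X)$, defining $\gamma$ as its second component. The verifications you supply (colinearity and $\B$-linearity of $w\circ L\eta X$ via \eqref{vwlin}--\eqref{vwass2}, the squares \eqref{fgcomp}, the triangular identities reducing to $L(\rho^X\circ\eta X)=\id_{LX}$, and full faithfulness from the invertible unit) are exactly the details the paper compresses into ``one can check''.
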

\begin{proof}
We construct  the unit $\alpha$ and the counit $\beta$ of this adjunction. Take an object $(X,\rho^X)$ in $\EM \Xx \A$ and define $\alpha X :(X,\rho^X)\to (X,\rho^X)$ as the identity. For all objects $\X\in\EMB \TT$, define
\begin{eqnarray*}
\beta\X=(f\X,g\X): HV_A(\X)&\to& \X,\\
((X,\rho^X), (LX, L\eta X, L\rho^X) , L\rho^X, \rho^X)&\to& ((X,\rho^X),(Y,\rho^Y,\tau^Y),v,w),\\
f \X=X:X\to X,&\ & g \X=w\circ L\eta X :L X\to Y\, .
\end{eqnarray*}
 Combining naturality and the adjunction properties of the unit $\eta$ and counit $\varepsilon$ with the diagrams $(12)-(13)$, one can check that $f$ and $g$ are indeed morphisms in $\EMB \TT$.

Now define $\gamma$ by putting $\gamma \X = g \X$,  for all $X\in \EMB \TT$.
\end{proof}

\begin{proposition}
With notation as above, $K$ is an equivalence of categories if and only if $K_A$ is an equivalence of categories and $\gamma:\Lambda V_A\to V_B$ is a natural isomorphism.
\end{proposition}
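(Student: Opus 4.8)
The plan is to reduce the whole statement to a single factorisation of the comparison functor, $K=HK_A$, from which both implications follow formally. First I would make the three comparison functors explicit: for $Z\in\Tt$ one has $V_AKZ=(RZ,R\varepsilon Z)=K_AZ$ and $V_BKZ=(\frc CZ,\frc C\varepsilon Z)=K_BZ$, where $K_B:\Tt\to(\EMC\Tt\C)^\B$ is the comparison functor of the adjunction $(U^C,\frc C)$. The key sub-identity is $\Lambda K_A=K_B$: both $\Lambda K_AZ$ and $K_BZ$ have underlying object $LRZ$, coalgebra structure $L\eta RZ$ and descent map $LR\varepsilon Z$, so they coincide as objects of $(\EMC\Tt\C)^\B$. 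Comparing all four components then shows that $KZ=((RZ,R\varepsilon Z),(\frc CZ,\frc C\varepsilon Z),R\varepsilon Z,\frc C\varepsilon Z)$ equals $HK_AZ=((RZ,R\varepsilon Z),\Lambda(RZ,R\varepsilon Z),R\varepsilon Z,LR\varepsilon Z)$, the only nontrivial points being the two middle components (this is $\Lambda K_A=K_B$) and the two $w$-components (the equality $\frc C\varepsilon Z=LR\varepsilon Z$ on underlying objects); on morphisms $Kf=HK_Af$ is immediate. A useful consistency check is that $\gamma K$ is the identity: indeed $\gamma KZ=LR\varepsilon Z\circ L\eta RZ=LRZ$ by applying $L$ to the triangle identity $R\varepsilon Z\circ\eta RZ=RZ$.

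Next I would record the principle driving the rest. By Lemma~\ref{lem.HV} the pair $(H,V_A)$ is an adjunction whose left adjoint $H$ is fully faithful (its unit being the identity). For such an adjunction $H$ is an equivalence of categories if and only if its counit $\beta$ is a natural isomorphism. Lemma~\ref{lem.HV} gives $\beta\X=(f\X,g\X)$ with $f\X$ the identity on $X$ and $g\X=\gamma\X$; hence by Lemma~\ref{lem.iso} the morphism $\beta\X$ is an isomorphism exactly when $\gamma\X$ is. Thus $H$ is an equivalence if and only if $\gamma$ is a natural isomorphism.

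Granting $K=HK_A$ and the fact that $H$ is an equivalence if and only if $\gamma$ is a natural isomorphism, both directions are formal. If $K_A$ is an equivalence and $\gamma$ is a natural isomorphism, then $H$ is an equivalence and $K=HK_A$, being a composite of equivalences, is an equivalence. Conversely, suppose $K$ is an equivalence. Since $H$ is fully faithful and $K=HK_A$ is fully faithful, $K_A$ is fully faithful; and since $K$ is essentially surjective while $H$ (being fully faithful) reflects isomorphisms, for every object $(X,\rho^X)$ of $\EM\Xx\A$ there is $Z\in\Tt$ with $KZ=HK_AZ\cong H(X,\rho^X)$, whence $K_AZ\cong(X,\rho^X)$; so $K_A$ is essentially surjective. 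Therefore $K_A$ is an equivalence, and choosing a quasi-inverse yields $H\cong KK_A^{-1}$, again a composite of equivalences, so $H$ is an equivalence and $\gamma$ is a natural isomorphism.

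The main obstacle is the first step, establishing $K=HK_A$ cleanly, and within it the identification $\Lambda K_A=K_B$ together with the matching of the $w$-components; this is exactly where the triangle identities of $(L,R)$ and the explicit forms of the cofree coalgebra functor $\frc C$ and of $\Lambda$ are used. Once that identity and the counit formula $\beta\X=(f\X,\gamma\X)$ are secured, the two implications are purely formal manipulations with equivalences and fully faithful functors.
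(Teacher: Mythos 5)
Your proof is correct, and in the harder direction it takes a genuinely different and more economical route than the paper. Your central observation --- that the factorisation $K=HK_A$ holds on the nose (equivalently $\Lambda K_A=K_B$, together with the matching of the $v$- and $w$-components, all of which come down to $R_B=\frc C$ acting as $LR$ on underlying data) --- is never stated in the paper, although it is implicitly what the paper's converse direction rests on; in fact the paper's phrase ``Since $K=V_AK_A$'' is a typo (the composite is not even well formed) for the earlier identity $K_A=V_AK$, and your strict identity $K=HK_A$ repairs and clarifies precisely this point. For the implication ``$K$ an equivalence $\Rightarrow$ $K_A$ an equivalence and $\gamma$ an isomorphism'' the paper argues concretely: it first computes $\gamma KZ=LR\varepsilon Z\circ L\eta RZ=LRZ$ (your consistency check) and transports along essential surjectivity to conclude $\gamma$ is an isomorphism, and then invokes Proposition~\ref{prop.adjoint.comp} to describe the quasi-inverse $D$ of $K$ as the colimit of diagrams \eqref{diag.col}, shows that invertibility of $\gamma$ collapses this colimit to a coequaliser, and extracts functors $D_A$, $D_B$ with natural isomorphisms $D_AK_A\simeq \Tt$ and $\EM\Xx\A\simeq K_AD_A$ realising an adjoint equivalence. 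You replace all of this by formal cancellation: $H$ is fully faithful with identity unit (Lemma~\ref{lem.HV}), so $K=HK_A$ fully faithful forces $K_A$ fully faithful, essential surjectivity of $K$ plus reflection of isomorphy by $H$ gives essential surjectivity of $K_A$, and then $H\cong KK_A^{-1}$ is an equivalence; the final step ``$H$ an equivalence iff the counit $\beta$ is an isomorphism iff $\gamma$ is'' (via Lemma~\ref{lem.iso}, since $\beta\X=(\id_X,\gamma\X)$, with invertibility of the $\B$-algebra morphism $\gamma\X$ detected on underlying objects) is the same mechanism as the paper's converse, only phrased through $H$ rather than $V_A$. What your argument forgoes is the explicit descent-theoretic description of the quasi-inverse $D_A$ of $K_A$ as a coequaliser, which is the content that links this proposition to Proposition~\ref{pr.blow} and to the comparison with \cite{Mes:effdes}; what it buys is a proof that avoids the colimit machinery of Proposition~\ref{prop.adjoint.comp} altogether and isolates the purely formal skeleton of the statement.
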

\begin{proof}
Assume that $K$ is an equivalence. Up to an isomorphism, any  $\X\in \EMB \TT$ has the form $\X = KZ$, for some $Z\in \Tt$, i.e.\ 
$$\X= ( (RZ,R\varepsilon Z), (LRZ, L\eta RZ, LR\varepsilon Z), R\varepsilon Z, LR\varepsilon Z).$$
The natural transformation  $\gamma : L V_A \to V_B$ described in Lemma~\ref{lem.HV} comes out as
\[
\xymatrix{
\gamma \X : LRZ \ar[rr]^-{L\eta RZ} && LRLRZ \ar[rr]^-{LR\varepsilon Z} && LRZ
}.
\]
Since $(L,R)$ is an adjoint pair,  $\gamma \X$ is an isomorphism. By the construction of the comparison functor in Section~\ref{sec.comp},  $K_B= V_B K$ and $K_A= V_A K$. Denote the inverse functor of $K$ by $D$. In view of Proposition~\ref{prop.adjoint.comp}, for an object $\X$ in $\EMB \TT$,  $D\X$ is the following colimit in $\Tt$,
\[
\xymatrix{
LRLX \ar@<-.5ex>[dd]_-{\varepsilon LX} \ar@<.5ex>[dd]^-{L\rho^X} \ar@/^/[ddrrrr]^{w} 
&&&& LRY \ar@/^/[lllldd]^{Lv} \ar@<-.5ex>[dd]_-{\varepsilon Y} \ar@<.5ex>[dd]^-{\tau^Y}\\  \\
LX \ar[drr]^-{d^A\X} &&&& Y \ar[dll]_-{\d^B\X}    \\
&& D\X \, .
}
\]
Since $\gamma$ is a natural isomorphism, this colimit reduces to (each) one of the following isomorphic coequalisers
\[
\xymatrix{
LRLX \ar@<-.5ex>[dd]_-{\varepsilon LX} \ar@<.5ex>[dd]^-{L\rho^X} \ar[rr]^-{LR\gamma\X} &&
 LRY  \ar@<-.5ex>[dd]_-{\varepsilon Y} \ar@<.5ex>[dd]^-{\tau^Y}
\\ \\
LX \ar[rr]^-{\gamma \X} \ar[d] && Y \ar[d] \\
D_A X \ar[rr]^-\cong && D_B Y \, . 
}
\]
Therefore, there are  functors $D_A : \EM \Xx \A\to \Tt$ and $D_B: (\EMC \Tt \C)\sp \B\to \Tt$ such that $D\simeq D_A V_A \simeq D_B V_B$ and $D_A \simeq DH$. These yield natural isomorphisms
\begin{eqnarray}
D_A K_A&\simeq &D_A V_A K\simeq D K\simeq  \Tt  \label{DK} , \\
{\EM \Xx \A} &\simeq &  V_A H \simeq  V_A K D H \simeq K_A D_A \, . \label{KD}  
\end{eqnarray}
The fact that $K$ is an equivalence of categories is used in the last isomorphism of \eqref{DK} and in the second isomorphism of \eqref{KD}. The first isomorphism of \eqref{KD} follows by Lemma~\ref{lem.HV}.
These natural isomorphisms are exactly the unit and counit of the adjunction $(D_A,K_A)$, hence $K_A$ is an equivalence of categories.

Conversely, if $\gamma$ is a natural isomorphism, then, in light of its  construction  in the proof of Lemma~\ref{lem.HV}, the counit of the adjunction $(H,V_A)$ is a natural isomorphism, hence $V_A$ is an equivalence of categories.
Since $K = V_A K_A$, we infer that $K$ is an equivalence of categories as well.
\end{proof}

Next, we apply the results of Section~\ref{sec.Moritath} to the pair of monads described at the beginning of this section.  $\TT$ of \eqref{moritablow} is a Morita context between $\Xx$ and $\EMC\Tt\C$. For any $(X,\rho^X)\in\EM\Xx\A$,  the coequaliser \eqref{eq.TAY} is a contractible coequaliser in $\EMC\Tt\C$:
\[
\xymatrix{
LRLRLX \ar@<.5ex>[rr]^-{LR\varepsilon LX} \ar@<-.5ex>[rr]_-{LRL\rho^X} && LRLX \ar[rr]^-{L\rho^X} 
\ar@(ld,rd)[ll]^-{L\eta RLX}
&& LX \ar@(ld,rd)[ll]^-{L\eta X}  .
}
\]
Hence the functor $\hT_A$ exists and, furthermore, $\hT_A=\Lambda$. For any $(Y,\rho^Y,\tau^Y)\in \EM{(\EMC\Tt\C)}\B$, consider the following pair in $\Xx$ (or in $\EM\Xx\A$)
\begin{equation}\label{eq.TBXblow}
\xymatrix{
RLRY\ar@<.5ex>[rr]^-{R\varepsilon Y} \ar@<-.5ex>[rr]_-{R\tau^Y} && RY \, .
}
\end{equation}
Then $R\rho^Y$ is a common right inverse for $R\varepsilon Y$ and $R\tau^Y$. Furthermore, 
the following is a contractible coequaliser in $\EM{(\EMC\Tt\C)}\B$
$$\xymatrix{
LRLRY \ar@<.5ex>[rr]^-{LR\varepsilon Y} \ar@<-.5ex>[rr]_-{LR\tau^Y} && LRY \ar[rr]^-{\tau^Y} \ar@(ld,rd)[ll]^-{L\eta RY} && Y \ar@(ld,rd)[ll]^-{\rho^Y} \, .
}$$
Hence, \eqref{eq.TBXblow}  is a reflexive $L$-contractible coequaliser pair. Thus if the coequaliser of the pair \eqref{eq.TBXblow} exists (in $\Xx$), it is exactly the coequaliser \eqref{eq.TBX}. 

\begin{proposition}\label{pr.blow}
Consider the following statements.
\begin{enumerate}[(i)]
\item $K_B$ is an equivalence of categories;
\item $\Xx$ contains coequalisers of pairs \eqref{eq.TBXblow} (i.e.\ reflexive $L$-contractible coequaliser pairs) and $AA$ preserves them;
\item The Morita context $\TT$ of \eqref{moritablow} induces an equivalence of categories  $\Xx^\A$ and $\EM{(\EMC\Tt\C)}\B$.
\end{enumerate}
Then $(i)$ implies $(ii)$ implies $(iii)$.
\end{proposition}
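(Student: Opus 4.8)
The plan is to prove the two implications $(i)\Rightarrow(ii)$ and $(ii)\Rightarrow(iii)$ separately. For the first I would use that moritability of $K_B$ forces every object of $\EM{(\EMC\Tt\C)}\B$ to be of cofree type $K_BZ$, for which the pair \eqref{eq.TBXblow} becomes a \emph{split} (hence absolute) coequaliser inside $\Xx$; for the second I would verify the hypotheses of Theorem~\ref{th.morita}(2)(iii) for the Morita context \eqref{moritablow}, most of which come for free from the contractibility observations made just before the statement.

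\textbf{Proof of $(i)\Rightarrow(ii)$.} Assume $K_B\colon\Tt\to\EM{(\EMC\Tt\C)}\B$ is an equivalence. Since the parallel pair \eqref{eq.TBXblow} depends naturally on the object $(Y,\rho^Y,\tau^Y)$ and $K_B$ is essentially surjective, it suffices to treat the cofree case $(Y,\rho^Y,\tau^Y)=K_BZ=(\frc CZ,\frc C\varepsilon Z)$, where $Y=LRZ$ and $\tau^Y=LR\varepsilon Z$; the general case follows by transporting along the isomorphism, a split coequaliser pair being stable under isomorphism. For this cofree object \eqref{eq.TBXblow} is the pair $R\varepsilon LRZ,\ RLR\varepsilon Z\colon RLRLRZ\to RLRZ$, and I would check directly that $R\varepsilon Z\colon RLRZ\to RZ$ is a split coequaliser of it, with section $\eta RZ$ and contracting homotopy $\eta RLRZ$. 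Indeed $R\varepsilon Z\circ\eta RZ=RZ$ and $R\varepsilon LRZ\circ\eta RLRZ=RLRZ$ are triangular identities for $(L,R)$, while $RLR\varepsilon Z\circ\eta RLRZ=\eta RZ\circ R\varepsilon Z$ is the naturality of $\eta$ at $R\varepsilon Z$. As split coequalisers are absolute, they are preserved by every functor, in particular by $AA$, which is precisely statement $(ii)$.

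\textbf{Proof of $(ii)\Rightarrow(iii)$.} Here I would apply Theorem~\ref{th.morita}(2) to the Morita context $\TT$ of \eqref{moritablow}, checking its condition $(iii)$. The coequaliser \eqref{eq.TAY} was already seen to be a contractible coequaliser, so it exists and is preserved by every functor, in particular by $BB$ and $TB$. In this situation \eqref{eq.TBX} coincides with \eqref{eq.TBXblow}; by hypothesis $(ii)$ these coequalisers exist and are preserved by $AA$, and preservation by $\hT A$ follows because $\hT A=(\frc C LR)L$ factors through $L$: the left adjoint $L$ preserves the coequaliser \eqref{eq.TBXblow}, and, \eqref{eq.TBXblow} being $L$-contractible, its $L$-image is a split (absolute) coequaliser which is preserved by $\frc C LR$. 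It remains to produce natural transformations $\hchi\colon\Yy\to\hT_AT$ and $\chi\colon\Xx\to T_B\hT$ with $\hpi\circ\hchi=\biota^B$ and $\pi\circ\chi=\biota^A$. By the sufficient condition recorded in the remark following Theorem~\ref{th.morita} it is enough to exhibit $\hve\colon\Yy\to\hT T$ and $\ve\colon\Xx\to T\hT$ with $\hev\circ\hve=\biota^B$ and $\ev\circ\ve=\biota^A$; I would take $\hve Y=L\eta RY\circ\rho^Y$ (a coalgebra morphism, by coassociativity of the comonad and of the $\C$-coalgebra $(Y,\rho^Y)$) and $\ve X=RL\eta X\circ\eta X$, for which $\hev\circ\hve=\biota^B$ and $\ev\circ\ve=\biota^A$ both reduce to the triangular identity $R\varepsilon\circ\eta R=\mathrm{id}_R$, setting $\hchi=\htau T\circ\hve$ and $\chi=\tau\hT\circ\ve$. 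With all conditions of Theorem~\ref{th.morita}(2)(iii) verified, that theorem identifies $\TT$ with an inverse pair of equivalences $(T_B,\hT_A)$ between $\EM{(\EMC\Tt\C)}\B$ and $\Xx^\A$, which is exactly $(iii)$.

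\textbf{Main obstacle.} The delicate point is $(i)\Rightarrow(ii)$, and more precisely the realisation that moritability of $K_B$ should be exploited through essential surjectivity. For a \emph{general} $\B$-algebra the pair \eqref{eq.TBXblow} is only $L$-contractible (its $L$-image, not the pair itself, is split), and showing directly that $R$ preserves the coequaliser in $\Tt$ of $(\varepsilon Y,\tau^Y)$ seems awkward. Restricting to the cofree objects $K_BZ$ is what makes the splitting visible \emph{inside} $\Xx$, via the triangular identities, after which absoluteness does all the remaining work; the rest of the argument is then a routine matter of matching the contractibility data to the hypotheses of Theorem~\ref{th.morita}.
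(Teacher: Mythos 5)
Your proposal is correct and follows essentially the same route as the paper: for $(i)\Rightarrow(ii)$ you use essential surjectivity of $K_B$ to reduce to cofree objects $K_BZ$, where the pair \eqref{eq.TBXblow} becomes a split (hence absolute) coequaliser via the triangular identities, exactly as in the paper; for $(ii)\Rightarrow(iii)$ you verify the hypotheses of Theorem~\ref{th.morita}(2), which is precisely what the paper's one-line proof ("follows from Theorem~\ref{th.morita} and the above observations") intends. Your extra details — preservation by $\hT A$ via factoring through the left adjoint $L$, and the sections $\hve Y = L\eta RY\circ\rho^Y$, $\ve X = RL\eta X\circ\eta X$ (which are exactly the splittings appearing in the paper's contractible-coequaliser displays) — correctly fill in what the paper leaves implicit.
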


\begin{proof}
$\ul{(i)\Rightarrow(ii)}$. Since $K_B$ is an equivalence,  up to an isomorphism,  objects in $\EM{(\EMC\Tt\C)}\B$ are of the form $K_B(Z)=(LRZ,L\eta RZ, LR\varepsilon Z)$ for some $Z\in\Tt$. Consequently the pair \eqref{eq.TBXblow} results in the following contractible coequaliser in $\EMC\Tt\C$
\[
\xymatrix{
RLRLRZ \ar@<.5ex>[rr]^-{R\varepsilon LRZ} \ar@<-.5ex>[rr]_-{RLR\varepsilon Z} && RLRZ \ar[rr]^-{R\varepsilon Z} 
\ar@(ld,rd)[ll]^-{\eta RLRZ}
&& RZ \ar@(ld,rd)[ll]^-{\eta RZ} \, .
}
\]
$\ul{(ii)\Rightarrow(iii)}$. This follows immediately from Theorem~\ref{th.morita} and the above observations.
\end{proof}

\begin{corollary}\label{co.blow}
$K_B$ is an equivalence if and only if $K_A$ is an equivalence and conditon $(ii)$ of Proposition~\ref{pr.blow} holds.
\end{corollary}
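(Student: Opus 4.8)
The plan is to deduce the corollary from a single natural isomorphism, $K_B\cong\Lambda K_A$, together with the implications already established in Proposition~\ref{pr.blow}. First I would pin down how the three comparison functors are related. Since $KZ=((RZ,R\varepsilon Z),(LRZ,L\eta RZ,LR\varepsilon Z),R\varepsilon Z,LR\varepsilon Z)$, applying the forgetful functors gives $K_A=V_AK$ and $K_B=V_BK$. The natural transformation $\gamma:\Lambda V_A\to V_B$ of Lemma~\ref{lem.HV}, whiskered with $K$, yields $\gamma K:\Lambda V_AK\to V_BK$, i.e.\ a natural transformation $\Lambda K_A\to K_B$, whose component at $Z$ is $\gamma(KZ)=LR\varepsilon Z\circ L\eta RZ=L(R\varepsilon Z\circ\eta RZ)=\id_{LRZ}$ by the triangle identity for $(L,R)$. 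Hence $\gamma K$ is a natural isomorphism (in fact the identity), so $K_B\cong\Lambda K_A$.

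Next I would show that condition (ii) forces $\Lambda$ to be an equivalence. Recall from the discussion preceding Proposition~\ref{pr.blow} the identification $\hT_A=\Lambda$. If (ii) holds, Proposition~\ref{pr.blow} yields $(ii)\Rightarrow(iii)$, i.e.\ the Morita context \eqref{moritablow} induces an equivalence between $\EM\Xx\A$ and $(\EMC\Tt\C)\sp\B$; by Theorem~\ref{th.morita} the functor realising this equivalence in the direction $\EM\Xx\A\to(\EMC\Tt\C)\sp\B$ is exactly $\hT_A=\Lambda$, so under (ii) the functor $\Lambda$ is an equivalence of categories.

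With these two facts the argument is formal. For $(\Leftarrow)$, assuming $K_A$ is an equivalence and (ii) holds, $\Lambda$ is an equivalence and hence so is the composite $K_B\cong\Lambda K_A$. For $(\Rightarrow)$, assuming $K_B$ is an equivalence, Proposition~\ref{pr.blow} gives $(i)\Rightarrow(ii)$, so (ii) holds and $\Lambda$ is therefore an equivalence; choosing a quasi-inverse $\Lambda'$ of $\Lambda$ gives $K_A\cong\Lambda'\Lambda K_A\cong\Lambda'K_B$, a composite of equivalences, whence $K_A$ is an equivalence. The only non-formal input is the isomorphism $K_B\cong\Lambda K_A$, and the point deserving most care is matching the ``dual descent data'' description of $(\EMC\Tt\C)\sp\B$: one must check that the $\B$-action component $LR\varepsilon Z$ produced by $K$ coincides with the one produced by $\Lambda$ (this is where a variance slip would most easily occur), which is precisely what the computation $\gamma(KZ)=\id_{LRZ}$ confirms.
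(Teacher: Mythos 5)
Your proof is correct and is essentially the paper's own argument: the paper's one-line proof invokes precisely the equality $K_B=\Lambda K_A$ together with Proposition~\ref{pr.blow}, which is exactly what you unpack (including the identification $\hT_A=\Lambda$ needed to see that condition $(ii)$ makes $\Lambda$ an equivalence via Theorem~\ref{th.morita}). The only cosmetic difference is that you derive $K_B=\Lambda K_A$ by whiskering $\gamma$ with $K$ and checking $\gamma K=\id$, whereas the paper reads the equality off directly from the formulas for $K_A$, $K_B$ and $\Lambda$.
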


\begin{proof}
This follows from Proposition \ref{pr.blow} and the equality $K_B=\Lambda K_A (=\hT_A K_A)$. 
\end{proof}

\begin{remark}
Proposition~\ref{pr.blow} and Corollary~\ref{co.blow} provide an alternative proof of (the dual version of) \cite[Theorem 2.19, Theorem 2.20]{Mes:effdes}. 
\end{remark}

\subsection {Morita theory for rings} \label{Moritaring}
In ring and module theory, a {\em Morita context} is a sextuple $(A,B,M,N,\sigma,\hsigma)$, where $A$ and $B$ are rings, $M$ is an $A$-$B$ bimodule, $N$ is a $B$-$A$ bimodule, $\sigma : M\ot_B N\to A$ is an $A$-$A$ bimodule map, and $\hsigma: N\ot_A M \to B$ is a $B$-$B$ bimodule map rendering commutative the following diagrams
\begin{equation}\label{morita.mod}
\xymatrix{
M\ot_B N\ot_AM \ar[rr]^-{\sigma\ot_AM}\ar[d]_{M\ot_B \hsigma} && A\ot_A M \ar[d] \\
M\ot_B B \ar[rr] && M,} \qquad \xymatrix{
N\ot_A M\ot_BN \ar[rr]^-{\hsigma\ot_BN}\ar[d]_{N\ot_A \sigma} && B\ot_BN \ar[d] \\
N\ot_AA \ar[rr] && N.}
\end{equation}
The unmarked arrows are canonical isomorphisms (induced by actions). With every Morita context one associates a matrix-type Morita ring 
$$
Q = \begin{pmatrix}A & M\\ N& B \end{pmatrix} := \{ \begin{pmatrix}a & m\\ n& b \end{pmatrix} \; | \; a\in A, b\in B, m\in M, n\in N\}, 
$$
with the product given by 
$$
\begin{pmatrix}a & m\\ n& b \end{pmatrix} \begin{pmatrix}a' & m'\\ n'& b' \end{pmatrix} = \begin{pmatrix}aa' + \sigma(m\ot_A n') & am' + mb'\\ na'+ bn'& bb' + \hsigma(n\ot_B m') \end{pmatrix} .
$$
Since $Q$ has two orthogonal idempotents summing up to the identity, its left modules split into direct sums. More precisely, left $Q$-modules correspond to quadruples $(X,Y, \bv, \bw)$,  where $X$ is a left $A$-module, $Y$ is a left $B$-module, $\bv: M\ot_B Y\to X$ is a left $A$-module map and $\bw: N\ot_A X\to Y$ is a left $B$-module map such that the following diagrams
\begin{equation}\label{morita.ring}
\xymatrix{
M\ot_B N\ot_AX \ar[rr]^-{\sigma\ot_AX}\ar[d]_{M\ot_B \bw} && A\ot_A X \ar[d] \\
M\ot_B Y \ar[rr]^\bv && X,} \qquad \xymatrix{
N\ot_A M\ot_BY \ar[rr]^-{\hsigma\ot_BY}\ar[d]_{N\ot_A \bv} && B\ot_BY \ar[d] \\
N\ot_AX \ar[rr]^\bw && Y,}
\end{equation}
commute. The left action of $Q$ on $X\oplus Y$ is given by
$$
\begin{pmatrix}a & m\\ n& b \end{pmatrix} \begin{pmatrix} x \\ y \end{pmatrix} = \begin{pmatrix}ax + \bv(m\ot_B y) \\  by + \bw(n\ot_A x) \end{pmatrix} .
$$
 
Take abelian groups ($\ZZ$-modules) $A$ and $B$. The associated tensor functors $\tA = A\ot - :\Ab \to \Ab$, $\tB= B\ot - : \Ab \to \Ab$ are monads on the category of abelian groups  if and only if $A$ and $B$ are rings. Furthermore $\EM\Ab \tA = {}_A\Mm$ (the category of left $A$-modules) and $\EM\Ab  \tB = {}_B\Mm$. Take abelian groups $M$ and $N$. The tensor functor $T = M\ot - :\Ab \to \Ab$ is an $A\ot(-)$-$B\ot(-)$ bialgebra if and only if $M$ is an $A$-$B$ bimodule. The left action $\lambda: \tA T= A\ot M\ot -\to M\ot -$ is, for all abelian groups $X$, $\lambda X : \lambda\ZZ \ot X$, where $\lambda\ZZ  : A\ot M\to M$ is a left action of $A$ on $M$. Similarly, the right action of $\tB$ on $T$ corresponds to a right action of $B$ on $M$. Symmetrically,  $\hT = N\ot -$ is a $\tB$-$\tA$ bialgebra if and only if $N$ is a $B$-$A$ bimodule. Any $\tA$-$\tA$ bialgebra map $\ev: T\hT = M\ot N\ot - \to A\ot - = \tA$ is fully determined by its value at $\ZZ$ by $\ev X = \ev \ZZ\ot X$; the map $\ev \ZZ: M\ot N\to A$ is an $A$-$A$ bimodule map. If the map $\ev$ is required to satisfy the second of the diagrams in \eqref{balanced}, then the universality of tensor products yields a unique $A$-bimodule map $\sigma: M\ot_BN\to A$ such that
$$
\ev \ZZ :  \xymatrix{  M\ot N \ar[r] & M\ot_B N \ar[r]^-\sigma & A}.
$$
Conversely, any $A$-$A$ bilinear map $\sigma: M\ot_BN\to A $ determines a $\tB$-balanced $\tA$-bialgebra natural map $\ev: T\hT = M\ot N\ot - \to A\ot - = \tA$.
In a symmetric way there is a bijective correspondence between $\tA$-balanced $\tB$-bialgebra maps $\hev: \hT T\to \tB$ and $B$-$B$ bilinear maps $\hsigma: N\ot_A M\to B$. The natural transformations $\ev$, $\hev$ satisfy conditions \eqref{morita} if and only if the corresponding maps $\sigma$, $\hsigma$ satisfy conditions \eqref{morita.mod}. These observations establish bijective correspondence (in fact, an isomorphism of categories) between module theoretic Morita contexts $(A,B,M,N,\sigma,\hsigma)$ and objects in $\Mor (A\ot -, B\ot -)$. 

Once an object $\TT$ in $\Mor (A\ot -, B\ot -)$ is identified with a module-theoretic Morita context $(A,B,M,N,\sigma,\hsigma)$ one can compute the corresponding Eilenberg-Moore category $\EMAb \TT$. An object in $\EMAb \TT$ consists of a left $A$-module $X$ (an algebra of the monad $A\ot -$) and a left $B$-module $Y$ (an algebra of the monad $B\ot -$), and two module maps $v: M\ot Y\to X$ and $w : N\ot X\to Y$. The commutativity of diagrams \eqref{vwass2} yields unique module maps $\bv : M\ot_B Y\to X$ and $\bw : N\ot_A X\to Y$ such that the following diagrams
$$
\xymatrix{ M\ot Y \ar[rr]^-v\ar[dr] && X \\ & M\ot_B Y\ar[ru]_\bv , & } \qquad \xymatrix{ N\ot X \ar[rr]^-w\ar[dr] && Y \\ & N\ot_A X\ar[ru]_\bw  & }
$$
commute. Diagrams \eqref{vwass1}  for $v$ and $w$ are equivalent to diagrams \eqref{morita.ring} for $\bv$ and $\bw$. This establishes an identification (isomorphism) of the Eilenberg-Moore category $\EMAb \TT$ with the category of left modules of the corresponding matrix Morita ring $Q$. 

With the interpretation of $\EMAb \TT$ as left $Q$-modules  ${}_Q\Mm$, the functors $U_\tA$, $U_\tB$ constructed in Section~\ref{mor.to.adj} are forgetful functors ${}_Q\Mm \to \Ab$, $U_\tA(X\oplus Y) = X$, $U_\tB(X\oplus Y) = Y$. The functors $G_\tA$, $G_\tB$ send an abelian group $X$ to its tensor product with respective columns in $Q$. More precisely, $G_\tA(X) = A\ot X \oplus N\ot X$ with the multiplication by $Q$,
$$
\begin{pmatrix}a & m\\ n& b \end{pmatrix} \begin{pmatrix}a' \ot x \\ n' \ot y \end{pmatrix} = \begin{pmatrix}aa'\ot x + \sigma(m\ot_A n')\ot y \\ na'\ot x+ bn'\ot y \end{pmatrix} ,
$$
 while $G_\tB(X) = M\ot X\oplus B\ot X$ with the action of $Q$,
 $$
\begin{pmatrix}a & m\\ n& b \end{pmatrix} \begin{pmatrix} m'\ot x \\ b'\ot y \end{pmatrix} = \begin{pmatrix} am'\ot x + mb'\ot y\\  \hsigma(n\ot_B m')\ot x + bb'\ot y \end{pmatrix} ,
$$
for all $m,m'\in M$, $n,n'\in N$, $a,a'\in A$, $b,b'\in B$ and  $x,y\in X$.

The construction presented in this section can be repeated with $\Xx$ chosen to be the category ${}_{k_A}\Mm$ of left modules over a ring $k_A$ and $\Yy$ the category of left modules over a ring $k_B$. All the functors $\tA$, $\tB$, $T$, $\hT$ can be chosen as tensor functors with the tensor product over respective rings $k_A$ or $k_B$. For example, take a $k_A$-$k_A$ bimodule $A$ and define $\tA$  as a functor $-\ot_{k_A}A:{}_{k_A}\Mm\to {}_{k_A}\Mm$. $\tA$ is a monad if and only if $A$ is a $k_A$-ring. Similarly choose $\tB = - \ot_{k_B}B$ for a $k_B$-ring $B$. Since modules over a $k_A$-ring $A$ coincide with modules of the ring $A$, one can choose further an $A$-$B$ bimodule $M$ and $B$-$A$ bimodule $N$ and proceed as above, taking care to  decorate suitably tensor products with $k_A$ and $k_B$.

\subsection{Categories with binary coproducts}
The characterisation of the Eilenberg-Moore category of  a module-theoretic Morita context described in Section~\ref{Moritaring} can be seen as a special case of the following situation. Assume that categories $\Xx$ and $\Yy$ have binary coproducts. Take a Morita context $\TT=(\A,\B,\T,\bhT,\ev,\hev)$ on $\Xx$ and $\Yy$ which functors $A$, $B$, $T$ and $\hT$ preserve (binary) coproducts. These data lead to the following monad $\Q = (Q, \m ,\biota)$ on the product category $\Xx\times \Yy$. The functor $Q$ is defined as $Q : (X,Y)\mapsto (AX+ T Y,BY+ \hT X)$. The unit $\biota$ of $\Q$ is the composite:
$$
\xymatrix{\biota(X,Y) : (X,Y) \ar[rr]^-{(\biota^AX,\biota^BY)} && (AX,BY) \ar[rr] && (AX+ T Y,BY+ \hT X),}
$$
where the second morphism consists of the canonical injections $AX \to AX + T Y$ and $BY \to BY+ \hT X$. The multiplication, $\m(X,Y)$, 
$$
(AAX + AT Y + T B Y + T \hT X, BBY + B\hT X + \hT AX+\hT T Y) \to (AX+ T Y,BY+ \hT X),
$$
combines multiplications in $A$ (and $B$) with actions of monads on $T$ (and $\hT$) and with $\ev$ (and $\hev$). That is $AAX+ T \hT X + AT Y + T B Y  \to AX+ T Y$ is given as a sum $(\m^A+ \ev)X +(\lambda  + \rho )Y$, where $(\m^A+ \ev)X: AAX+ T \hT X\to AX$ and $(\lambda  + \rho )Y: AT Y + T B Y  \to T Y$  are the unique fillers in the diagrams
$$
\xymatrix{ 
& AAX +T\hT X &\\
  AAX\ar[ur] \ar[dr]_{\m^A X} & & T\hT X  , \ar[ul] \ar[dl]^{\ev X} \\
  & AX & } \qquad \xymatrix{ 
& ATY +TB Y &\\
  ATY \ar[ur] \ar[dr]_{\lambda Y} & & TB Y  \ar[ul] \ar[dl]^{\rho Y} \\
  & TY . & } 
  $$
 The second component of $\m(X,Y)$ is defined similarly. In this case, the Eilenberg-Moore category for the Morita context $\TT$ is isomorphic to the Eilenberg-Moore category of algebras of the monad $\Q$, i.e.\ $(\Xx, \Yy)^\TT \cong (\Xx\times\Yy) ^\Q$.

 Given a double  adjunction  $\tt=(\Tt,(L_A,R_A),(L_B,R_B)) \in \Adj(\Xx,\Yy)$, in which also $\Tt$ has binary coproducts which are preserved by $R_A,R_B$, the functor $\langle R_A , R_B\rangle : \Tt \to \Xx\times \Yy$, $Z\mapsto (R_AZ, R_BZ)$ has a left adjoint $[L_A , L_B] :(X,Y)\mapsto L_AX + L_BY$. The monad defined by the adjunction $[L_A , L_B] \dashv \langle R_A , R_B\rangle$ is simply the monad $\Q$ corresponding to the Morita context $\Upsilon(\tt)$ (see Section~\ref{adj.to.mor}). In view of the above identification of Eilenberg-Moore categories, the moritability of $(R_A,R_B)$ is equivalent to the monadicity of $\langle R_A , R_B\rangle$ and thus is decided by the classical Beck theorem.

\subsection {Formal duals}
As recalled in Section~\ref{sec.morita}, given a monad $\A= (A,\m^A,\biota^A)$ on $\Xx$ and a monad $\B= (B,\m^B,\biota^B)$ on $\Yy$ a pair of formally dual bialgebras is a pair of bialgebra functors $T: \Yy\to \Xx$, $\hT: \Xx\to \Yy$ equipped with natural bialgebra transformations $\ev: T\hT\to A$, $\hev: \hT T\to B$ that satisfy compatibility conditions expressed by diagrams \eqref{morita}. In other words, a pair of formally dual bialgebra functors is the same as an {\em unbalanced Morita context}. Morphisms between pairs of formally dual bialgebras are quadruples consisting of two monad morphism and two bialgebra morphisms which satisfy the same conditions as morphisms between Morita contexts. This defines a category $\Dual(\Xx,\Yy)$ of which $\Mor(\Xx,\Yy)$ is a full subcategory. Thus the functor $\Upsilon : \Adj (\Xx,\Yy) \to \Mor(\Xx,\Yy)$ described in Section~\ref{adj.to.mor} extends to the functor $\Adj (\Xx,\Yy) \to \Dual(\Xx,\Yy)$.

\subsection {Herds versus pretorsors}
Following \cite[Appendix]{BrzVer:herd}, a {\em herd functor} is a pair of formally dual bialgebra functors $\TT=(\A,\B,\T,\bhT,\ev,\hev)$ (i.e.\ an object of $\Dual(\Xx,\Yy)$) together with a natural transformation $\gamma: T\to T\hT T$ 
rendering commutative the following diagrams
\begin{equation}\label{herd}
\xymatrix{ & T  \ar[dd]^\gamma \ar[dl]_{T\biota^B}\ar[dr]^{\biota^A T}& \\
TB & & AT  \ ,  \\
& T \hT T  \ar[ul]^{T\hev}\ar[ur]_{\ev T} &} \qquad
\xymatrix{T \ar[rr]^-{\gamma}\ar[dd]_{\gamma} && T\hT T\ar[dd]^{T\hT \gamma} \\ && \\
T\hT T \ar[rr]^-{\gamma \hT T} && {T\hT T \hT T}\ .} 
\end{equation}
The map $\gamma$ is called a {\em shepherd}. A morphism between herds $\phi: (\A,\B,\T,\bhT,\ev,\hev, \gamma)\to (\A',\B',\T',\bhT',\ev',\hev', \gamma')$  is a morphism  $\phi =(\phi^1,\phi^2,\phi^3,\phi^4)$ in $\Dual(\Xx,\Yy)$ compatible with shepherds $\gamma$ and $\gamma'$, i.e.\ whose third and fourth components make the following diagram
\begin{equation}\label{herd.mor}
\xymatrix{
T \ar[rr]^-{\gamma} \ar[d]_{\phi^3} && T\hT T \ar[d]^{\phi^3\phi^4\phi^3} \\
T' \ar[rr]_-{\gamma'} && T'\hT'T' \ 
}
\end{equation}
commute, where $\phi^3\phi^4\phi^3 = \phi^3\hT' T'\circ T \phi^4 T'\circ T\hT\phi^3$ denotes the Godement product. The category of herd functors between categories $\Xx$ and $\Yy$ is denoted by $\Herd (\Xx, \Yy)$. This category contains a full subcategory $\bHerd (\Xx, \Yy)$ of {\em balanced herds} with objects those herds, whose underlying formally dual pair is a Morita cotext (i.e.\ characterised by the forgetful functor $\bHerd (\Xx, \Yy)\to \Mor (\Xx, \Yy)$).

Following \cite[Definition~4.1]{BohMen:pretor}, a {\em pre-torsor} is an object $\tt=(\Tt,(L_A,R_A),(L_B,R_B))$ of $\Adj(\Xx,\Yy)$   together with a natural transformation $\tau: R_AL_B\to R_AL_B R_BL_A R_AL_B$ 
rendering commutative the following diagrams
\begin{equation}\label{pretor1}
\xymatrix{ & R_AL_B  \ar[dd]^\tau \ar[dl]_{R_AL_B\eta^B}\ar[dr]^{\eta^AR_AL_B}& \\
R_AL_B R_BL_B & & R_AL_AR_AL_B  \ ,  \\
& R_AL_B  R_BL_A R_AL_B  \ar[ul]^{R_AL_BR_B\varepsilon^AL_B}\ar[ur]_{R_A\varepsilon^BL_A R_AL_B} &} 
\end{equation}
\begin{equation}\label{pretor2}
\xymatrix{R_AL_B \ar[rrr]^-{\tau}\ar[d]_{\tau} &&& R_AL_BR_BL_A R_AL_B\ar[d]^{R_AL_BR_BL_A \tau} \\
R_AL_BR_BL_A R_AL_B \ar[rrr]^-{\tau R_BL_AR_AL_B} &&& {R_AL_BR_BL_A R_AL_B R_BL_A R_AL_B}\ .} 
\end{equation}
A morphism from a pre-torsor $(\Tt,(L_A,R_A),(L_B,R_B), \tau)$ to   $(\Tt',(L'_A,R'_A),(L'_B,R'_B), \tau')$ is a morphism $F$ in  $\Adj(\Xx,\Yy)$  that is compatible with the structure maps $\tau$ and $\tau'$. The compatibility condition is expressed as the equality
\begin{equation}\label{pretor.mor}
(R_AbR_BaR_Ab)\circ \tau = \tau'\circ R_A b,
\end{equation}
where $a$ and $b$ are defined by \eqref{eq:a,b} and the shorthand notation for the Godement product is used.  The category of pre-torsors on  categories $\Xx$ and $\Yy$ is denoted by $\pretor (\Xx, \Yy)$. 

Even the most perfunctory comparison of diagrams \eqref{herd} with \eqref{pretor1}-\eqref{pretor2} reveals that the functor $\Upsilon$ applied to the double adjunction $\tt$ underlying a pre-torsor $(\tt, \tau)$ yields a (balanced) herd with a shepherd $\tau$, i.e.\ $(\Upsilon\tt, \tau)$ is a herd. The definition of $\Upsilon$ on morphisms  immediately affirms that the condition \eqref{pretor.mor} for $F$ implies condition \eqref{herd.mor} for $\Upsilon F$. Thus $\Upsilon$ yields the functor
$$
\bUpsilon : \pretor(\Xx, \Yy) \to \bHerd(\Xx, \Yy) \subseteq \Herd(\Xx, \Yy), \qquad (\tt,\tau)\mapsto (\Upsilon\tt, \tau).
$$

In the converse direction, the functor $\Gamma$ constructed in Section~\ref{mor.to.adj} yields the functor
$$
\bGamma : \bHerd(\Xx, \Yy)  \to  \pretor(\Xx, \Yy), \qquad (\TT, \gamma) \mapsto (\Gamma\TT, \gamma).
$$
The key observation is that $T = U_AG_B$ and $\hT = U_BG_A$, hence the shepherd $\gamma$ of the herd $T$ becomes the natural transformation $\tau$ for the corresponding pre-torsor $(\EMM\TT, (G_A,U_A),(G_B,U_B))$; see the definition of $U_A$, $G_A$, $U_B$, $G_B$ in Section~\ref{mor.to.adj}. 

As (implicitly) calculated in the proof of Proposition~\ref{prop.adjoint}, the natural trasformations $a$ and $b$ corresponding to the comparison functor $K: \Tt\to \EMM \TT$ are identity maps, hence the condition \eqref{pretor.mor} is trivially satisfied, so, for any pre-torsor, $K$ is a morphism of pre-torsors. Thus Proposition~\ref{prop.adjoint} and Corollary~\ref{cor.equiv} immediately imply

\begin{corollary}\label{cor.pretor}
$(\bGamma, \bUpsilon)$ is an adjoint pair and $\bGamma$ is a full and faithful functor. Furthermore, $(\bGamma, \bUpsilon)$ is a pair of inverse equivalences between categories of pre-torsors and balanced herd functors if and only if, for all pre-torsors $(\Tt,(L_A,R_A),(L_B,R_B), \tau )\in \pretor(\Xx,\Yy)$, $(R_A,R_B)$ is a moritable pair. In particular, if $(\Gamma, \Upsilon)$ is a pair of inverse equivalences, then so is $(\bGamma, \bUpsilon)$.
\end{corollary}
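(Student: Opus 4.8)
The plan is to \emph{lift} the entire adjunction structure of Proposition~\ref{prop.adjoint} from the pair $(\Gamma,\Upsilon)$ to the decorated pair $(\bGamma,\bUpsilon)$, and then to read off the equivalence criterion from Corollary~\ref{cor.equiv}. The starting observation is that $\bGamma$ and $\bUpsilon$ are liftings of $\Gamma$ and $\Upsilon$ along the two forgetful functors $\Phi:\pretor(\Xx,\Yy)\to\Adj(\Xx,\Yy)$, $(\tt,\tau)\mapsto\tt$, and $\Psi:\bHerd(\Xx,\Yy)\to\Mor(\Xx,\Yy)$, $(\TT,\gamma)\mapsto\TT$; indeed $\Phi\bGamma=\Gamma\Psi$ and $\Psi\bUpsilon=\Upsilon\Phi$ directly from the definitions of $\bGamma$ and $\bUpsilon$ recorded above. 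Both $\Phi$ and $\Psi$ are faithful (a morphism of pre-torsors, resp.\ of herds, is just a morphism in $\Adj$, resp.\ $\Mor$, that in addition satisfies \eqref{pretor.mor}, resp.\ \eqref{herd.mor}), and this faithfulness is exactly what will let the triangular identities descend.

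First I would lift the unit and counit of $(\Gamma,\Upsilon)$. The unit $\lambda:\id_{\Mor(\Xx,\Yy)}\to\Upsilon\Gamma$ is the identity transformation, so for a balanced herd $(\TT,\gamma)$ one has $\bUpsilon\bGamma(\TT,\gamma)=(\TT,\gamma)$, and the identity morphism of $\TT$ satisfies the shepherd compatibility \eqref{herd.mor} trivially (with $\phi^3,\phi^4$ both identities the diagram reduces to $\gamma=\gamma$). Hence $\lambda$ lifts to an identity natural transformation $\bar\lambda:\id_{\bHerd(\Xx,\Yy)}\to\bUpsilon\bGamma$. For the counit, recall that $\kappa:\Gamma\Upsilon\to\id_{\Adj(\Xx,\Yy)}$ has component at $\tt$ the comparison functor $K$, and that the natural transformations $a,b$ attached to $K$ were computed above to be identities, so that $K$ satisfies \eqref{pretor.mor} trivially and is a morphism of pre-torsors. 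Thus $\kappa$ lifts componentwise to $\bar\kappa:\bGamma\bUpsilon\to\id_{\pretor(\Xx,\Yy)}$.

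Next I would deduce that $(\bar\lambda,\bar\kappa)$ exhibit $(\bGamma,\bUpsilon)$ as an adjoint pair. Each of the two triangular identities for $\bar\lambda,\bar\kappa$ is, componentwise, an equation between parallel morphisms in $\bHerd(\Xx,\Yy)$ or $\pretor(\Xx,\Yy)$ whose image under the faithful $\Psi$ or $\Phi$ is precisely the corresponding triangular identity for $\lambda,\kappa$, which holds by Proposition~\ref{prop.adjoint}; faithfulness then forces the identities upstairs. Since $\bar\lambda$ is a natural isomorphism (being the identity), $\bGamma$ is full and faithful, which settles the first sentence of the corollary.

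Finally, for the equivalence statement, $(\bGamma,\bUpsilon)$ is a pair of inverse equivalences precisely when its counit $\bar\kappa$ is a natural isomorphism (the unit $\bar\lambda$ already is). The step I expect to require the most care is checking that the shepherd data impose no extra obstruction: the component of $\bar\kappa$ at a pre-torsor is the comparison functor $K$, whose associated $a,b$ are identities, so that any quasi-inverse of $K$ as a mere functor automatically transports the shepherds and is again a morphism of pre-torsors. Hence $\bar\kappa$ is a (pointwise) equivalence if and only if each underlying $K$ is an equivalence of categories, which by the identification made in the proof of Corollary~\ref{cor.equiv} is exactly the moritability of $(R_A,R_B)$ for every pre-torsor $(\Tt,(L_A,R_A),(L_B,R_B),\tau)\in\pretor(\Xx,\Yy)$. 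This gives the asserted equivalence. The closing \emph{in particular} is then immediate: if $(\Gamma,\Upsilon)$ is a pair of inverse equivalences, then by Corollary~\ref{cor.equiv} $(R_A,R_B)$ is moritable for every double adjunction, a fortiori for every pre-torsor, whence $(\bGamma,\bUpsilon)$ is a pair of inverse equivalences.
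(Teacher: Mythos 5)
Your proposal is correct and takes essentially the same route as the paper: the paper likewise rests on the observations that $\bUpsilon\bGamma$ is the identity on balanced herds and that the comparison functor $K$ is a morphism of pre-torsors because its associated $a$ and $b$ are identities (so \eqref{pretor.mor} holds trivially), and then invokes Proposition~\ref{prop.adjoint} and Corollary~\ref{cor.equiv}. Your explicit lifting of the unit, counit and triangular identities through the faithful forgetful functors merely spells out what the paper compresses into ``immediately imply.''
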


\section{Remarks on dualisations and generalisations}

\subsection {Dualisations}
There are various ways in which the categories studied in preceding sections and thus results described there can be (semi-)dualised.

One can define the category $\Adj^o(\Xx,\Yy)$, whose objects are pentuples (or triples)
$(\Tt,(L_A,R_A),(L_B,R_B))$, where $\Tt$ is a category and
$(L_A:\Xx\to\Tt,R_A:\Tt\to\Xx)$ and $(L_B:\Tt\to\Yy,R_B:\Yy\to\Tt)$ are adjunctions.
Morphisms are defined in the natural way.

The category $\Adj^c(\Xx,\Yy)$ is defined by taking objects $(\Tt,(L_A,R_A),(L_B,R_B))$, where $\Tt$ is a category and
$(L_A:\Tt\to\Xx,R_A:\Xx\to\Tt)$ and $(L_B:\Yy\to\Tt,R_B:\Tt\to\Yy)$ are adjunctions.
 
Finally, the category $\Adj^{o,c}(\Xx,\Yy)$ has objects $(\Tt,(L_A,R_A),(L_B,R_B))$, where $\Tt$ is a category and $(L_A:\Tt\to\Xx,R_A:\Xx\to\Tt)$ and $(L_B:\Tt\to\Yy,R_B:\Yy\to\Tt)$ are adjunctions.

On the other hand, one can consider the category of Morita-Takeuchi contexts, whose objects are sextuples $(\C,\D,\P,\bhP,\cov,\ol{\cov})$ consisting of two comonads, two bicoalgebra functors, and two bicolinear cobalanced natural transformations satisfying compatibility conditions dual to those in Section~\ref{sec.morita}. Also, one can consider an intermediate version, where the first two objects in the sextuple are a  monad and a comonad respectively.

By (semi-)dualising the results of the previous sections,  functors between the respective categories with pairs of adjunctions and the respective categories of contexts can be constructed. Appropriate Eilenberg-Moore categories for  various contexts can be defined thus yielding  a converse construction and leading to the definition of a comparison functor in each case, and to the solution of the corresponding moritability problem.

\subsection {Bicategories}
Adjoint pairs, Morita  and Takeuchi contexts have a natural formulation within the framework of bicategories; see, for example, the bicategorical formulation of wide Morita contexts in \cite{ElK:wid}. We believe that our work can, taking into account the needed (computational) care but without any conceptual problems, be transferred to this (more general) setting. However, we preferred to formulate the results of this paper in the present way, as this presentation might be clearer, more accessible and we believe that even in this generality it covers already enough interestering examples and applications.

\section*{Acknowledgements}
The authors would like to thank Viola Bruni and George Janelidze for helpful comments. The research of A.\ Vazquez Marquez is supported by the CONACYT  grant no.\  208351/303031.

\end{document}